\documentclass[12pt]{amsart}
\usepackage[margin=1in]{geometry}
%\usepackage{setspace}
%\doublespacing
\usepackage{ulem} 
%%%%%%%%%%%%%%%%%%%%%%%
%\usepackage[notref,notcite]{showkeys}
\usepackage{amsmath}
\usepackage{amssymb}
\usepackage{amsthm}
\usepackage{mathrsfs}
\usepackage{dsfont}
\usepackage{subfig}
\usepackage{lineno}
\usepackage{enumerate}
\usepackage{graphicx}
\usepackage{epstopdf}
\usepackage{multirow}
\usepackage{color}
\usepackage{xspace}
\usepackage{pdfsync}
\usepackage{hyperref} 
\usepackage{mathtools}
\usepackage{algorithmicx}
\usepackage{algpseudocode}
\usepackage{algorithm}
\DeclareGraphicsExtensions{.pdf}
\graphicspath{{./PDF/}}
\captionsetup[subfigure]{margin=0pt, parskip=0pt, hangindent=0pt, indention=0pt, labelformat=parens, labelfont=rm}
%%%%%%%%%%%%%%%%%%%%%%%%%
% Macros
%%%%%%%%%%%%%%%%%%%%%%%%%%

\DeclareMathOperator*{\argmin}{argmin}
\newcommand{\bq}{\begin{equation}}
\newcommand{\eq}{\end{equation}}
\newcommand{\R}{\mathbb{R}}

\newcommand{\abs}[1]{\left\vert#1\right\vert}

\newcommand{\bO}{\mathcal{O}}

\newcommand{\Dt}{\mathcal{D}}

\newcommand{\QW}{quadratic Wasserstein metric\xspace}
\newcommand{\MA}{Monge-Amp\`ere\xspace}

\newcommand{\diag}{\text{diag}}
\newcommand{\vp}{v^\perp}
\newcommand{\M}{\mathcal{M}}
\newcommand*\Laplace{\mathop{}\!\mathbin\bigtriangleup}

\algnewcommand{\LineComment}[1]{\State \(\triangleright\) #1}

%%% Theorem environments
\newtheorem{theorem}{Theorem}
\theoremstyle{lemma}

\newtheorem{definition}{Definition}

\theoremstyle{remark}

\newtheorem{remark}{Remark}

%%%%%%%%%%%%%%%%%
% Formatted for Article class
%%%%%%%%%%%%%%%%%

% This is for article style

\begin{document}

\title[Optimal Transport For Challenging Seismic Inverse Problems]{Analysis and Application of Optimal Transport For Challenging Seismic Inverse Problems}
%\title[Optimal Transport For Challenging Seismic Inverse Problems]{Optimal Transport For Challenging Seismic Inverse Problems}
%\title[Advantages of Optimal Transport Based Seismic Inversion]{The Advantages of Optimal Transport Based Seismic Inversion}

\author{Yunan Yang}

%\thanks{123
%} 

\address{Courant Institute of Mathematical Sciences, New York University, 251 Mercer Street, New York, NY 10012 USA}
\email{yunan.yang@courant.nyu.edu}

\keywords{computational seismology, full-waveform inversion, optimal transport, Wasserstein distance}

\begin{abstract}
%\blue{
In seismic exploration, sources and measurements of seismic waves on the surface are used to determine model parameters representing geophysical properties of the earth. Full-waveform inversion (FWI) is a nonlinear seismic inverse technique that inverts the model parameters by minimizing the difference between the synthetic data from the forward wave propagation and the observed true data in PDE-constrained optimization. The traditional least-squares method of measuring this difference suffers from three main drawbacks including local minima trapping, sensitivity to noise, and difficulties in reconstruction below reflecting layers. Unlike the local amplitude comparison in the least-squares method, the quadratic Wasserstein distance from the optimal transport theory considers both the amplitude differences and the phase mismatches when measuring data misfit. We will briefly review our earlier development and analysis of optimal transport-based inversion and include improvements, for example, a stronger convexity proof. The main focus will be on the third ``challenge'' with new results on sub-reflection recovery.
%}
\end{abstract}
\date{\today}
\maketitle
% \tableofcontents

%%%%%%%%%%%%%%%%%%%%%%%%%%%%%%%%%%%%%%%%%
\section{Introduction}
At the heart of seismic exploration is the estimation of essential geophysical properties including wave velocity. The development of man-made seismic sources and advanced recording devices now facilitates measurements of entire wavefields in time and space rather than using merely the travel time to estimate the subsurface properties. The full-wavefield setup is a more controlled setting and provides a large amount of data, which is needed for an accurate inverse process of estimating geophysical properties. 
The computational technique referred to as full-waveform inversion (FWI)~\cite{lailly1983seismic,tarantola1982generalized} utilizes information of the entire wavefield and follows the standard strategy of a partial differential equation (PDE) constrained optimization. Even three-dimensional inversion of subsurface elastic parameters using FWI is now possible and has become increasingly popular in exploration applications~\cite{yang2016review}. Currently, FWI can reconstruct sub-surface parameters with stunning detail and resolution~\cite{Virieux2017}. 
Research on FWI in both academia and industry has been very active over the past decade resulting in many new and innovative algorithms and software implementations.

Phase-based inversion methods such as traveltime tomography~\cite{luo1991wave} estimate the background velocity, while linear inversion techniques~\cite{Dai2013} fix the background velocity model, and update the reflectivity distribution.
Unlike these two classes of methods, FWI aims to recover both the low- and high-wavenumber components of the model by considering the full wavefield information.
In both time~\cite{tarantola1982generalized} and frequency~\cite{pratt1990inverse1} domains, the least-squares norm ($L^2$) has been the most widely used misfit function. It is, however now well known that inversion techniques based on $L^2$ face three critical obstacles.

First, the accuracy of $L^2$-based FWI is severely hampered by the lack of low-frequency data and a poor starting model. These limitations are mainly due to the ill-posedness of the inverse problem. The PDE-constrained optimization in FWI is typically solved by local optimization methods in which the subsurface model is described by using a large number of unknowns, and the number of model parameters is determined a priori~\cite{tarantola2005inverse}. As the name suggests, local methods only use the local gradient of the $L^2$ objective function, which is usually nonconvex with respect to the model parameters. As a result, the inversion process is easily trapped in local minima. Recent developments focus on this multiparameter and multi-mode modeling, but there is a dilemma. The more realistic the model is, the more parameters it has, resulting in even worse ill-posedness and even non-uniqueness.

Second, in addition to the difficulties with local minima, an additional problem of the $L^2$ norm is exacerbated by the fact that observed signals usually suffer from noise in the measurements. 
All seismic data contains either natural or experimental equipment noise. For example, the ocean waves lead to extremely low-frequency noise in the marine acquisition. Wind and cable motions also generate random noise.
%In practice, $L^2$ based inversion fits not only the data but also the noise in the data. 
As a result of the overfitting issue, high-frequency noise in the reconstruction is boosted during the iterative process. Stronger noise can even lead the inversion to local minima. 
%The fact that many radically different velocity models can have almost zero gradients partially results from an inappropriate choice of objective function. 
Therefore, in selecting a good objective function, its robustness with respect to noise is essential. 

Third, traditional FWI has difficulty in accurately updating deeper features with reflection-dominated data. Diving waves are wavefronts continuously refracted upwards through the earth due to the presence of a vertical velocity gradient. Due to limitations of the source-receiver distribution, there might be no diving waves traveling through the depth of interests or being recorded by the receivers, and reflections are usually the only available information representing the subsurface models. Conventional FWI using reflection data has been problematic in the absence of a really good initial model. Conventional $L^2$-based FWI only recovers a migration-type structure with severe overshooting. The high-wavenumber features updated by reflections often slow down the recovery of the missing low-wavenumber components. Often, the entire optimization scheme stalls.

The current challenges of $L^2$ norm-based FWI motivate us to replace the traditional $L^2$ norm with a new metric with better convexity and stability for seismic inverse problems.  Engquist and Froese~\cite{EFWass} first proposed to use the Wasserstein distance as an alternative objective function measuring the difference between synthetic data $f$ and observed data $g$. In our previous studies of the quadratic Wasserstein metric ($W_2$)~\cite{yang2017application,yangletter,engquist2016optimal,Survey1, Survey2}, we have addressed the first two challenges, the nonconvexity of the traditional $L^2$ norm, and its sensitivity to noise. We mainly focused on FWI that primarily uses diving waves or data from shallow reflectors to refine the velocity models. 

In this paper, we present results that demonstrate yet another advantage of $W_2$. Precisely we will show that $W_2$ is also able to mitigate the third drawback of the traditional $L^2$ method. The new material is related to the third challenge of the $L^2$ norm-based FWI and is beyond the well-known local minima or, so-called, cycle skipping issues. We will investigate properties of optimal transport for challenging inversion tests with reflection-dominated data and demonstrate that partial inversion for velocity below the deepest reflecting interface is still possible by using the quadratic Wasserstein distance from optimal transport theory. 
 
The paper is arranged as follows. We start by introducing the essential background of full-waveform inversion in Section~\ref{sec:FWI}, including the forward problem and the inverse problem, and its formulation as a PDE-constrained optimization. In Section~\ref{sec:OT}, we will briefly revisit several key concepts of optimal transport and define the quadratic Wasserstein distance ($W_2$), the main tool of this paper. We will address the importance of data normalization as well as numerical methods for optimal transport. Using the fact that optimal transport problem with the quadratic cost can be rigorously related to the \MA equation, in Section~\ref{sec:MA} we present a monotone finite difference \MA solver developed in~\cite{FOFiltered} which is proven to converge to the corresponding viscosity solution~\cite{FroeseTransport}. To illustrate the efficiency of the solver we present results for the well known Marmousi benchmark. The rest of the paper (Section~\ref{sec:challenge1}, \ref{sec:challenge2} and \ref{sec:challenge3}) is dedicated to the new or improved analysis of essential properties of $W_2$ based inversion. Optimal transport compares signals globally and naturally combines misfits in both amplitude differences and phase mismatches. The improved results of $W_2$-based inversion demonstrate the capacity of this new objective function in providing better convexity in inversion for many different types of seismic data, including transmission, refraction, and reflection. The $W_2$ distance captures the essential low-frequency components of the data residual, which is directly linked to the low-wavenumber structures of the velocity model. Both mathematical analysis and numerical examples demonstrate that $W_2$ is an advantageous choice for the objective function in data-driven inversion. 

%%%%%%%%%%%%%%%%%%%%%%%%%%%%%%%%%%%%%%%%%
%%%%%%%%%%%%%%%%%%%%%%%%%%%%%%%%%%%%%%%%%
\section{Full-Waveform Inversion} \label{sec:FWI}
Seismic data contains interpretable information about subsurface properties. Imaging predicts the spatial distribution of earthquakes and specifies the values of geological properties that are useful in exploration seismology. The state-of-the-art inverse method in exploration geophysics is full-waveform inversion, which can be seen as a PDE-constrained optimization. The forward operators are various types of wave equations. The goal of FWI is to find both the small-scale and large-scale components that describe geophysical properties using the entire content of seismic time history at a receiver, so-called traces. 
%%%%%%%%%%%%%%%%%%%%%%%%%%%%%%%%%%%%%%%%%
\subsection{The Forward Problem}
The forward problem of seismic inversion amounts to modeling and simulating the propagation of waves. The earth is complex, with various heterogeneities and multiple scales. Therefore, fast and accurate forward modeling is a significant step in seismic imaging. The current research in FWI covers multiple-parameter inversion using seismic waveforms, where the forward modeling includes anisotropic parameters, density, and attenuation factors~\cite{yang2016review} as well as viscoelastic modeling. It should be noted that the more parameters the model has, the less well-posed the inverse problem is. The real physics is far more complicated than the simple acoustic setting of this paper, but the industry standard is the acoustic model in time or frequency domains.

In this paper, we consider the inverse problem of finding the wave velocity of an acoustic wave equation in the interior of a domain from knowing the Cauchy boundary data together with natural boundary conditions~\cite{engquist1977absorbing}. The modeled synthetic data is $f(\mathbf{x_r},t;m) = u(\mathbf{x_r},z=0,t;m)$, where $\mathbf{x_r}$ is the receiver location, $u$ is the wavefield, obtained by solving, for example, the following 2D acoustic wave equation:
\begin{equation}\label{eq:FWD}
     \left\{
     \begin{array}{rl}
     & m(\mathbf{x})\frac{\partial^2 u(\mathbf{x},t)}{\partial t^2}- \Laplace u(\mathbf{x},t) = s(\mathbf{x},t),\\
    & u(\mathbf{x}, 0 ) = 0,                \\
    & \frac{\partial u}{\partial t}(\mathbf{x}, 0 ) = 0,    \\
     \end{array} \right.
\end{equation}
where the model parameter is the squared slowness $m(\mathbf{x}) = \frac{1}{c(\mathbf{x})^2}$ where $c(\mathbf{x})$ is the wave velocity, $u(\mathbf{x},t)$ is the wavefield and $s(\mathbf{x},t)$ is the source. The above wave equation can be seen as the forward operator $F$ such that  $f = F(m)$. It is a linear PDE but a nonlinear map from the model $m(\mathbf{x})$ to the data $u(\mathbf{x},t)$, where $\mathbf{x} \in \mathbb{R}^2$. 
We note that there are numerous techniques for approximating~\eqref{eq:FWD}, for example, discontinuous and continuous finite elements, spectral elements and finite difference methods. As our focus is on the inverse problem rather than on the solution of the forward problem, we will restrict our discretization to standard finite difference methods~\cite{moczo2007finite}. 

%for a defined initial wavefield:
%\bq
%\begin{split}
%u_{n,m}^{l+1} = &- u_{n,m}^{l-1}+2u_{n,m}^{l}\\
% &+ \frac{\Delta t^2}{m_{n,m}} \left(\frac{u_{n+1,m}^{l}-2u_{n,m}^{l}+u_{n-1,m}^{l}}{\Delta x^2} + \frac{u_{n,m+1}^{l}-2u_{n,m}^{l}+u_{n,m-1}^{l}}{\Delta z^2} + s^l_{n,m} \right)
%\end{split}
%\eq
%with the initial conditions 
%\bq
%u_{n,m}^{-1} = 0,\quad u_{n,m}^{0} = 0.
%\eq
%Here $u_{n,m}^l$  and $s_{n,m}^l$ are the wavefield and the source (forcing) term at the spatial position $(n\Delta x, m\Delta z)$ respectively. $m_{n,m}$ is the squared slowness at $(n\Delta x, m\Delta z)$. The step size $\Delta t$ is chosen to satisfy the numerical stability, i.e., the Courant-Friedrichs-Lewy (CFL) condition~\cite{courant1967partial}:
%\bq
%\min(\Delta x, \Delta z)>\sqrt{2}\Delta t \max(v).
%\eq

%%%%%%%%%%%%%%%%%%%%%%%%%%%%%%%%%%%%%%%%%
\subsection{FWI as PDE-Constrained Optimization}
The goal of FWI is to recover the subsurface model parameters $m^*$ from the observed true data $g$ such that 
\bq
g(\mathbf{x_r},t) = F(m^*) = u(\mathbf{x_r},z=0,t;m^*).
\eq 
The forward operator $F$ is nonlinear, and the inverse problem most often does not fulfill Hadamard's postulates of well-posedness. 
%Full-waveform inversion is one of the inherently more difficult large-scale nonlinear inverse problems. 

An alternative way of formulating the inverse problem is to estimate the true model parameter $m^*$ through the solution of an optimization problem
\bq \label{eq:simple_misfit}
m^* = \argmin \limits_m J(F(m), g),
\eq
where $J$ is a suitable choice of objective/loss/misfit function characterizing the difference between the synthetic data $F(m)$  generated by the current (and inaccurate) model parameter $m$ and the observable true data $g$. Full-waveform inversion (FWI) is a data-fitting approach, similar to many other inverse problems that are formulated as PDE-constrained optimization. \newline

%%%%%%%%%%%%%%%%%%%%%%%%%%%%%%%%%%%%%%%%%
\subsection{The Choice of Objective Function}~\label{sec:fwi_obj}
In seismic inversion, the misfit function, sometimes also referred to as the objective or loss function, is a functional on the data domain that measures the mismatch between the synthestic data and the observed data. The conventional objective function is the least-squares norm ($L^2$), which suffers from local minimum trapping and sensitivity to noise~\cite{Virieux2017}:
\begin{equation}\label{eq:L2_misfit}
J_0(m)=\frac{1}{2}\sum\limits_{r=1}^R \int\limits_0^{T_0} \abs{f(\mathbf{x_r},t;m)-g(\mathbf{x_r},t)}^2dt,
\end{equation}
where $g$ is the observed data, $f = F(m)$ is the simulated data, $\mathbf{x_r}$ are receiver locations, $T_0$ is the total recording time and $m$ is the model parameter. This formulation can also be extended to the case of multiple sources. In both time and frequency domains~\cite{tarantola1982generalized,pratt1990inverse1}, the $L^2$ norm has been the most widely used misfit function in FWI. 
The oscillatory and periodic nature of waveforms leads to a primary challenge in inversion called ``cycle skipping''. 
If the true data and the initial synthetic data are more than half a wavelength ($>\frac{\lambda}{2}$) away from each other, the first gradient of the $L^2$ norm can point in the wrong direction regarding the phase mismatch, but can nonetheless reduce the data misfit in the fastest manner~\cite{Beydoun1988}. Mathematically, this is due to the highly nonlinear nature of the inverse problem, the nonconvexity of the objective function, and finally can result in finding local minima. Figure~\ref{fig:2_ricker_signal} illustrates one typical example where the $L^2$ norm suffers from the ``cycle skipping'' issue, and consequently results in many local minima in the optimization landscape, as shown in Figure~\ref{fig:2_ricker_L2}.

\begin{figure}
\centering
    \subfloat[Two signals]{\includegraphics[height=0.1\textheight]{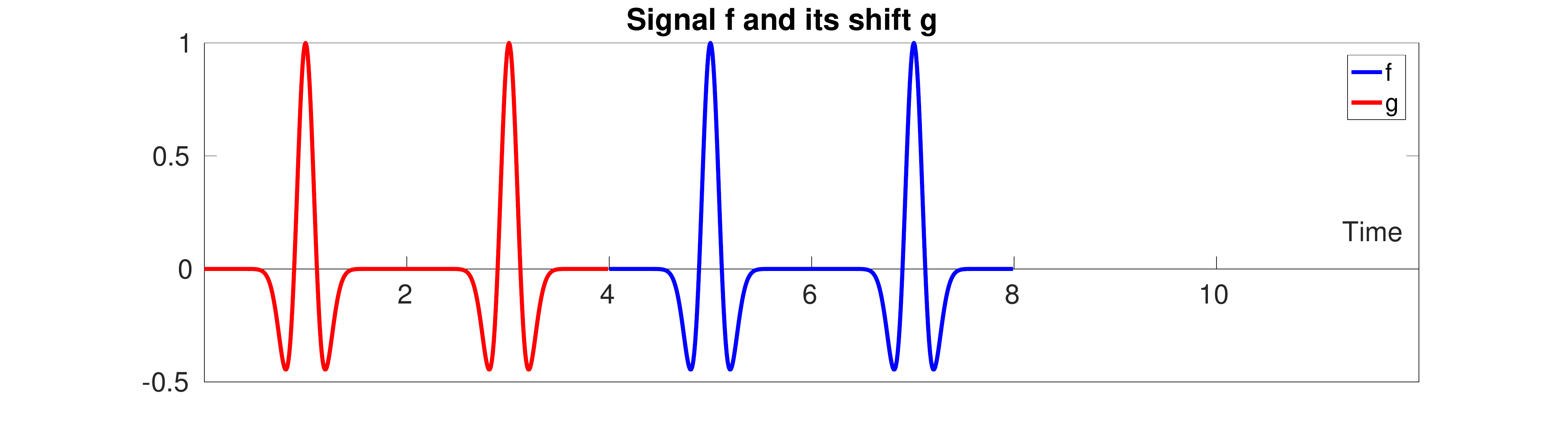}\label{fig:2_ricker_signal}}
      \subfloat[$L^2$ misfit of $s$]
{\includegraphics[height=0.1\textheight]{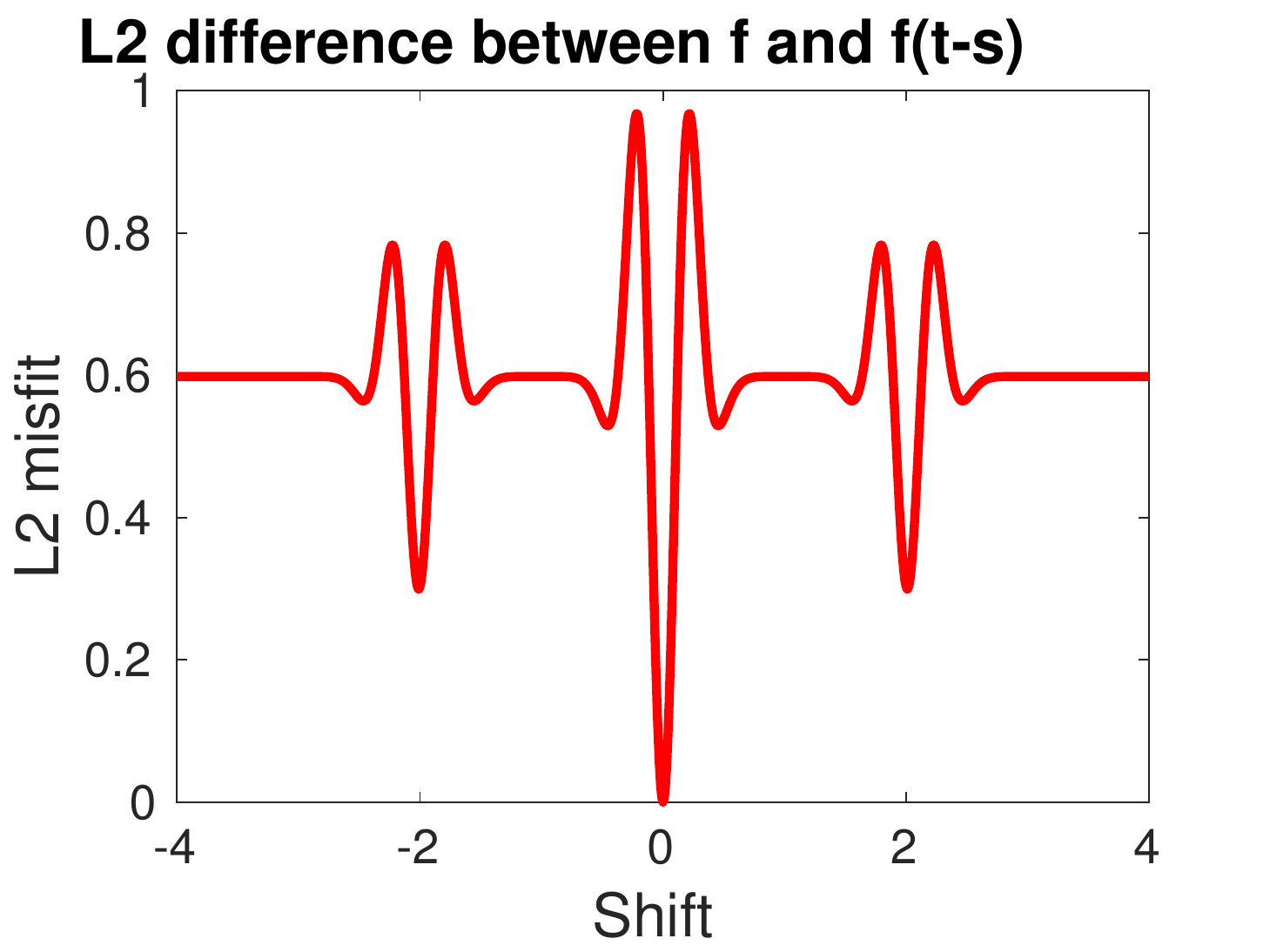}\label{fig:2_ricker_L2}}
%    \hspace{1cm}
      \subfloat[$W_2$ metric of $s$]
{\includegraphics[height=0.1\textheight]{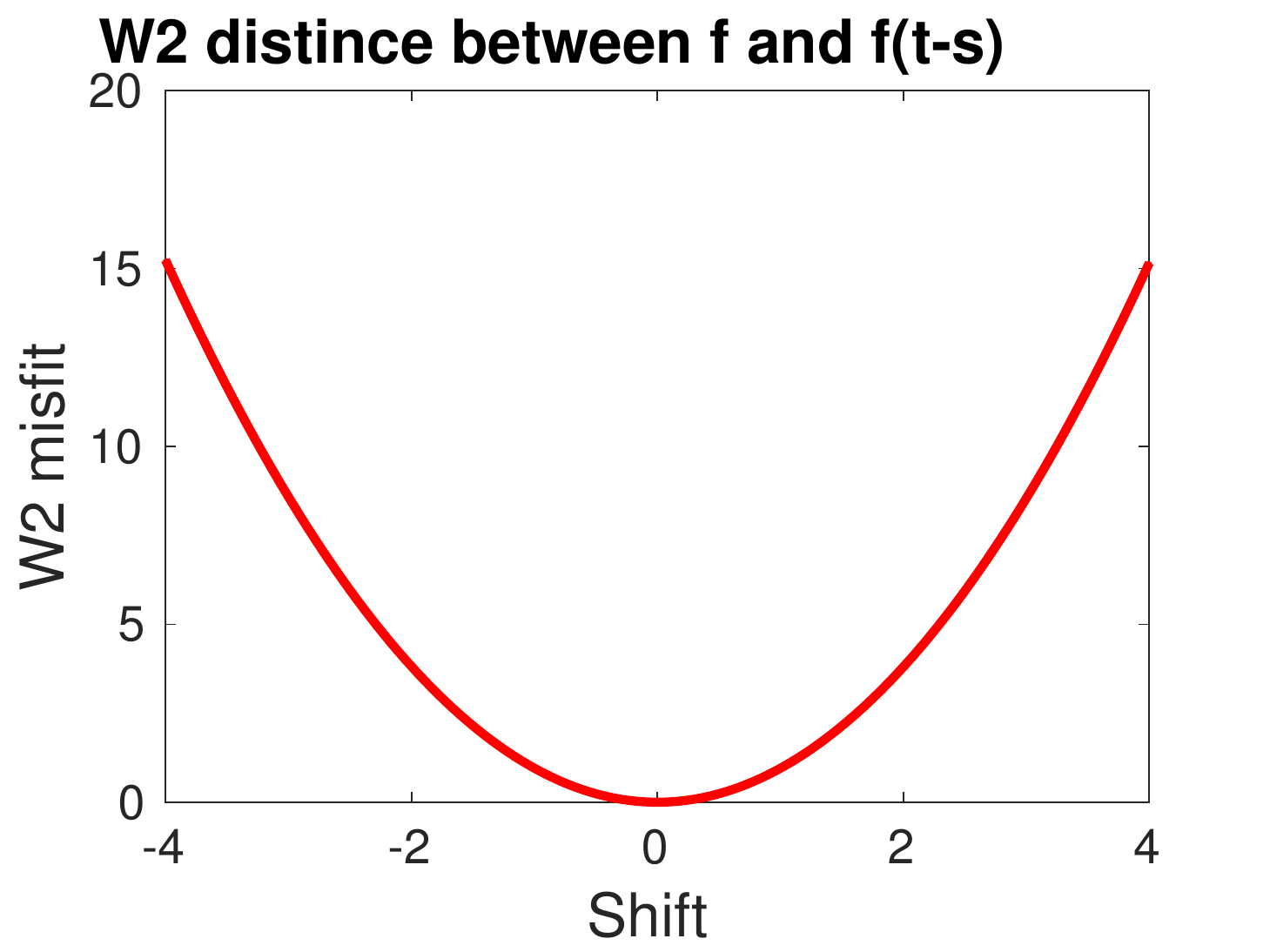}\label{fig:2_ricker_W2}}
\caption{(a)~A signal consisting of two Ricker wavelets (blue) and its shift (red);~(b)~$L^2$ norm between $f(t)$ and $f(t-s)$ in terms of shift $s$;~(c)~$W_2$ distance between $f(t)$ and $f(t-s)$ in terms of shift $s$.}~\label{fig:2_ricker}
\end{figure}

The current challenges of FWI motivate us to replace the traditional $L^2$ norm with a new metric of better convexity and stability for seismic inverse problems.  
%Engquist and Froese~\cite{EFWass} first proposed to use the Wasserstein distance as an alternative misfit function measuring the difference between synthetic data $f$ and observed data $g$ to mitigate the sensitivity to noise and the non-convexity issues of FWI. 
Soon after~\cite{EFWass}, there have been fruitful activities in the past four years in developing the idea of using optimal transport based objective functions for FWI from both academia~\cite{chen2017quadratic,Puthawala2018,engquist2016optimal,W1_2D,W1_3D,yang2017application,
Ballesio2018,Motamed2018} and industry~\cite{qiu2017full,Ramos2018, poncet2018fwi} with several field data applications. 

By using the $W_2$ distance to measure the two signals in Figure~\ref{fig:2_ricker_signal}, we obtain a globally convex optimization landscape as illustrated in Figure~\ref{fig:2_ricker_W2}. It has quite different features compared with Figure~\ref{fig:2_ricker_L2}, the result of the $L^2$ norm. The $W_2$ distance naturally combines misfits in both amplitude and phase,  and consequently achieves global convexity in data shifts. The quadratic Wasserstein distance naturally avoids many local minima of  $L^2$ norm that are generated by phase mismatches.

%%%%%%%%%%%%%%%%%%%%%%%%%%%%%%%%%%%%%%%%%
\section{Optimal Transport and the Wasserstein Distance}~\label{sec:OT}
The optimal transport problem seeks the most efficient way of transforming one distribution of mass to the other, relative to a given cost function. It was first introduced by Monge in 1781~\cite{Monge} and later expanded on by Kantorovich~\cite{kantorovich1960mathematical}. 
Optimal transport related techniques are nonlinear as they explore the model through both signal intensities and their locations. 
Since the 1990s, the mathematical analysis of optimal transport problems~\cite{brenier1991polar,Evans,Gangbo1996,rachev1998mass, Santambrogio,Villani,villani2008optimal}, together with recent advancements in numerical methods~\cite{ryu2018unbalanced,benamou2014numerical,Cuturi2013,Julien2011,Li2016,Oberman2015}, have driven the ongoing development of numerous applications based on optimal transport~\cite{kolouri2016transport}. Next, we will briefly review basic concepts in optimal transport and introduce the Wasserstein distance.

%%%%%%%%%%%%%%%%%%%%%%%%%%%%%%%%%%%%%%%%%
\subsection{The Wasserstein Metric}
Let X and Y be two metric spaces with nonnegative Borel measures $\mu$ and $\nu$ respectively. Assume X and Y have equal total measure:
\bq
\int_X d\mu = \int_Y d\nu.
\eq
Without loss of generality, we will hereafter assume the total measure to be one, i.e., $\mu$ and $\nu$ are probability measures. Given two nonnegative densities $f = d\mu$ and $g=d\nu$, we are interested in the mass-preserving map $T$ such that $f = g \circ T$. 
\begin{definition}[Mass-preserving map] \label{def:mass_preserve}
A transport map $T: X \rightarrow Y$ is mass-preserving if for any measurable
set $B \in Y$,
\bq ~\label{eq:mass_preserve1}
\mu (T^{-1}(B)) = \nu(B).
\eq
Then $\nu$ is said to be the push-forward of $\mu$ by $T$, and we write $\nu = T_\# \mu $.
\end{definition}

The transport cost function $c(x,y)$ maps pairs $(x,y) \in X\times Y$ to $\mathbb{R}\cup \{+\infty\}$, which denotes the cost of transporting one unit mass from location $x$ to $y$. The most common choices of $c(x,y)$ include $|x-y|$ and $|x-y|^2$, which correspond to the Manhattan distance and the Euclidean distance between vectors $x$ and $y$ respectively. Once we find a mass-preserving map $T$, the cost corresponding to $T$ is 
\[
I(T,f,g,c) = \int\limits_Xc(x,T(x))f(x)\,dx. 
\]
%Figure~\ref{fig:OT_exp} shows two discrete probability measures that contain five points each. While there are many maps $T$ that can perform the relocation such as Figure~\ref{fig:OT_exp1} and Figure~\ref{fig:OT_exp2}, 
We are interested in finding the optimal map that minimizes the total cost
\[
I(f,g,c) = \inf\limits_{T\in\M}\int\limits_Xc(x,T(x))f(x)\,dx,
\]
where $\M$ is the set of all maps that rearrange $f$ into $g$.

Thus, we have informally defined the optimal transport problem, the optimal map, as well as the optimal cost, which is also called the Wasserstein distance for the class of cost function $c(x,y) = |x-y|^p$:
\begin{definition}[The Wasserstein distance]~\label{def:OT}
  %The mathematical definition of the Wasserstein distance between the distributions
  We denote by $\mathscr{P}_p(X)$ the set of probability measures with finite moments of order $p$. For all $p \in [1, \infty)$,   
\bq~\label{eq:static}
W_p(\mu,\nu)=\left( \inf _{T_{\mu,\nu}\in \mathcal{M}}\int_{\mathbb{R}^d}\left|x-T_{\mu,\nu}(x)\right|^p d\mu(x)\right) ^{\frac{1}{p}},\quad \mu, \nu \in \mathscr{P}_p(X).
\eq
$\mathcal{M}$ is the set of all maps that rearrange the distribution $\mu$ into $\nu$.
\end{definition}
In this paper, we will focus on the quadratic cost ($p=2$). The corresponding misfit is the quadratic Wasserstein distance ($W_2$). We will also assume that $\mu$ does not give mass to small sets~\cite{brenier1991polar}, which is a necessary condition to guarantee the existence and uniqueness of the optimal map under the quadratic cost. This requirement is also natural for seismic signals.

For FWI, one can use the quadratic Wasserstein metric ($W_2$) to measure the misfit in the time domain, and the $L^2$ norm for the spatial domain. Computing the misfit function  becomes a 1D optimal transport problem, which can be solved explicitly. The overall misfit is then
%\bq \label{eq:W21D}
%J_1(m) = \sum\limits_{r=1}^R W_2^2(f(x_r,t;m),g(x_r,t))= \sum\limits_{r=1}^R \int\limits_0^{T_0}|t-G(x_r,t)^{-1}(F(x_r, t))|^2 f(x_r, t)dt,
%\eq
\bq \label{eq:W21D}
J_1(m) = \sum\limits_{r=1}^R W_2^2(f(x_r,t;m),g(x_r,t)),
\eq
where $f(x_r,t)$ and $g(x_r,t)$ are the normalized synthetic data and the normalized observed data at each fixed spatial location $x_r$. We will discuss the topic of data normalization in Section~\ref{sec:OT_data_normalization}.

%where $F(x_r,t)$ and $G(x_r,t)$ are cumulative distribution functions of the normalized synthetic data $f(x_r,t)$ and the normalized observed data $g(x_r,t)$ at each fixed spatial location $x_r$. We will discuss the topic of data normalization in Section~\ref{sec:OT_data_normalization}.

One can also compare the entire datasets without fixing the spatial location by solving a higher-dimensional optimal transport problem:
\bq \label{eq:W22D}
J_2(m) = W_2^2(f(\mathbf{x},t;m),g(\mathbf{x},t)).
\eq
We will introduce computation methods for these two optimal transport based objective functions $J_1$ and $J_2$ in Section~\ref{sec:OT_Numerical} and Section~\ref{sec:MA}, respectively.

%%%%%%%%%%%%%%%%%%%%%%%%%%%%%%%%%%%%%%%%%
\subsection{Data Normalization for Seismic Waveforms} \label{sec:OT_data_normalization}
There are two main requirements for signals $f$ and $g$ in optimal transport theory:
\bq
f(t)\geq 0,\ g(t)\geq 0,\ <f> = \int f(t) dt = \int g(t) dt = <g>.
\eq
Since these constraints are not expected for seismic signals, some data pre-processing is needed before we can implement the Wasserstein-based FWI. Previously in~\cite{yangletter,yang2017application}, we normalized seismic signals by adding a constant as follows,
\bq\label{eq:linear}
\tilde{f}(t) = \frac{f(t) + c}{<f+c>},\ \tilde{g}(t) = \frac{g(t) + c}{<g+c>},\ c = \min_t(f(t),g(t)).
\eq
This works remarkably well in realistic, large-scale examples~\cite{yangletter,yang2017application} together with the adjoint-state method~\cite{vogel2002computational,Plessix} for optimization using either the 1D or \MA based techniques. However, this linear normalization does not produce a convex misfit functional with respect to simple shifts.

Based on the analysis of different normalization methods in~\cite{Survey2}, a bijection between the original data and the normalized data is essential so as not to deteriorate the ill-posedness of the inverse problem.
An exponential-based normalization was proposed in~\cite{qiu2017full} to transform seismic signals into probability distributions. We proposed a new normalization in~\cite{Survey2} that satisfies most of the essential properties:
\begin{equation}\label{eq:mixed}
     \tilde{f}(t) = \left\{
     \begin{array}{rl}
     &  (f(t) + \frac{1}{c}) / b,\ f(t) \geq 0,\ c>0   \\
     & \frac{1}{c} \exp(cf(t)) / b,\ f(t)<0  \end{array} \right.
\end{equation}
where $b = <(f + \frac{1}{c})\mathds{1}_{f\geq 0} + \frac{1}{c} \exp(cf) \mathds{1}_{f< 0}  >$.

For the limit of small $c >0$, this is a linear scaling, which directly follows from Taylor expansion of the exponential part. In the limit of large $c$ values, $f$ obviously converges to $f^+ = \max \{f,0\}$. This is a $C^1$ function, compatible with the adjoint-state method outlined in Appendix~\ref{sec:adjoint_state_method}. The coefficient $c$ is selected based on the amplitude of the raw signal. This sign-sensitive normalization keeps the convexity of the quadratic Wasserstein distance with respect to signal translation as shown in Figure~\ref{fig:cvx_mixed}.

\begin{figure}
\includegraphics[width=0.9\textwidth]{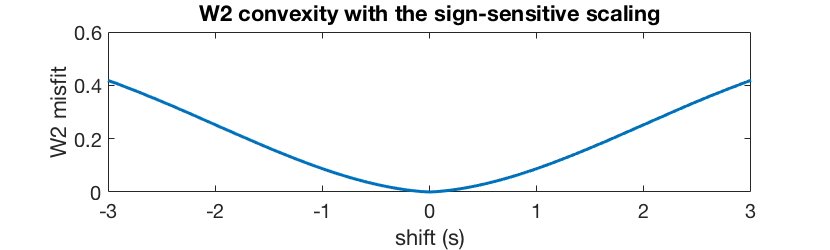}
\caption{The $W_2$ misfit regarding signal shift $s$ using sign-sensitive scaling, i.e., $W_2^2(f(t-s),f(t))$.}\label{fig:cvx_mixed}
\end{figure}

Optimal transport using exponential scaling or sign-sensitive scaling mainly focuses on the transport of $f^+$. 
If we denote the normalization function in ~\eqref{eq:mixed} as an operator $P$ such that $\tilde{f} = Pf$, we can use both parts of the signal by considering the following objective function: 
\bq\label{eq:mixmix}
J_{3}(m) = W_2^2 (P(f), P(g)) + W_2^2(P(-f), P(-g)).
\eq 
\ 
%%%%%%%%%%%%%%%%%%%%%%%%%%%%%%%%%%%%%%%%%
\subsection{Numerical Computation of Optimal Transport}~\label{sec:OT_Numerical}
In higher dimensions, it is not trivial to solve the optimal transport problem between the synthetic data and the observed data from the inverse problem. Here we will briefly introduce two main ideas for calculating the Wasserstein distance in seismic inversion: the 1D technique, and computational optimal transport in higher dimensions.
%%%%%%%%%%%%%%%%%%%%%%%%%%%%%%%%%%%%%%%%%
\subsubsection{The 1D Technique} \label{sec:OT_Numerical_1D}
In~\cite{yang2017application}, we propose a 1D technique for approximating the $W_2$ distance in full-waveform inversion (FWI). A trace is the time history measured at a receiver. For example, the 1D seismic signal $f(\mathbf{x_r},t;m) = u(\mathbf{x_r},z=0,t;m)$ is a trace of the whole observable waveform $f(\mathbf{x},t;m)$  at receiver $\mathbf{x_r}$. The advantage of this trace-by-trace technique is to compute the Wasserstein distance based on the 1D explicit formula; see Theorem~\ref{thm:OT1D} below. The objective function $J_1$ in~\eqref{eq:W21D} is mathematically equivalent to a measure that is $W_2$ metric in the time domain and $L^2$ norm in the spatial domain.

For $f$ and $g$ in one dimension, it is possible to exactly solve the optimal transport problem~\cite{Villani} in terms of the cumulative distribution functions
\bq \label{eq:F&G}
F(x) = \int_{-\infty}^x f(t)\,dt, \quad G(y) = \int_{-\infty}^y g(t)\,dt.
\eq

In fact, the optimal map is just the unique monotone rearrangement of the density $f$ into $g$. To compute the Wasserstein distance ($W_p$), we need the cumulative distribution functions $F$ and $G$ and their inverses $F^{-1}$ and $G^{-1}$ as the following theorem states:

\begin{theorem}[Optimal transportation on $\R$~\cite{villani2008optimal}]\label{thm:OT1D}
Let $0 < f, g < \infty$ be two probability density functions, each supported on a connected subset of $\R$.  Then the optimal map from $f$ to $g$ is $T = G^{-1}\circ F$.
\end{theorem}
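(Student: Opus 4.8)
The plan is to argue in two stages: first that $T=G^{-1}\circ F$ is an admissible (mass-preserving) map, and then that it is the cheapest such map under the quadratic cost. For the first stage I would exploit the hypotheses $0<f,g<\infty$ on connected supports: these make the cumulative distributions $F$ and $G$ from~\eqref{eq:F&G} continuous and strictly increasing from their supports onto $(0,1)$, hence invertible, so that $G^{-1}$ and the composition $T=G^{-1}\circ F$ are well defined and nondecreasing. To check $T_\#\mu=\nu$ I would compute, for each $y$, using monotonicity of $G$,
\[
T(x)\le y \iff G^{-1}(F(x))\le y \iff F(x)\le G(y) \iff x\le F^{-1}(G(y)),
\]
whence $\mu(T^{-1}((-\infty,y]))=F(F^{-1}(G(y)))=G(y)=\nu((-\infty,y])$. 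This confirms that $T$ rearranges $f$ into $g$ in the sense of Definition~\ref{def:mass_preserve}, and that $T$ is the monotone increasing rearrangement.

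The second and harder stage is optimality. The crux is a two-point exchange (submodularity) inequality for the quadratic cost: whenever $x_1<x_2$ and $y_1<y_2$,
\[
|x_1-y_1|^2+|x_2-y_2|^2 \;<\; |x_1-y_2|^2+|x_2-y_1|^2,
\]
so matching in the same order is strictly cheaper than matching across. I would prove this by observing that the two candidate pairs of displacements share the same mean while the crossed pair is strictly more spread out, so strict convexity of $z\mapsto z^2$ yields the strict inequality (the same reasoning covers any strictly convex $h$ with cost $c(x,y)=h(x-y)$). This inequality says precisely that an optimal coupling can contain no ``crossing''; formally, the support of an optimal Kantorovich plan is $c$-cyclically monotone, which for a strictly convex cost on $\R$ forces that support to be a nondecreasing set.

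Finally I would combine the two stages. Since $f$ and $g$ are genuine densities, $\mu$ and $\nu$ are nonatomic, so a plan supported on a monotone set is necessarily induced by a nondecreasing transport map, and such a map with marginals $\mu,\nu$ is unique up to a $\mu$-null set. As $T=G^{-1}\circ F$ is one such map by the first stage, it must coincide almost everywhere with the optimal one, which establishes the claim. The main obstacle I anticipate is exactly this passage from the local two-point inequality to global optimality: one must invoke existence of an optimal plan (via Kantorovich duality or direct compactness), the characterization of optimal supports by $c$-cyclical monotonicity, and then use nonatomicity of the marginals to upgrade a monotone-supported plan to an honest monotone map. The algebraic swapping inequality is routine; the measure-theoretic bookkeeping that converts it into uniqueness of the monotone rearrangement is where the care is needed.
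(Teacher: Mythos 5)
The paper does not prove this statement at all: it is quoted as a classical result and attributed to Villani's book, so there is no in-paper argument to compare yours against. Your proposal is correct and is essentially the standard textbook proof of that cited result: the push-forward computation $T(x)\le y \iff x\le F^{-1}(G(y))$ does verify $T_\#\mu=\nu$ under the hypotheses $0<f,g<\infty$ on connected supports, the two-point exchange inequality (which reduces to $-2(x_2-x_1)(y_2-y_1)<0$) correctly yields monotonicity of the support of any optimal plan via $c$-cyclical monotonicity, and nonatomicity of $\mu$ is exactly what is needed to upgrade a monotone-supported plan to the unique nondecreasing map. The one place to be slightly careful, which you already flag, is the existence of an optimal plan and the fact that optimal plans have cyclically monotone support; both are standard and hold here since the quadratic cost is continuous and the second moments are finite for the probability densities in question, so no genuine gap remains.
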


From the theorem above, we derive another formulation for the 1D quadratic Wasserstein metric:
%\bq\label{myOT1D}
%\begin{aligned}
%W_2^2(f,g) & = \int_0^1 |F^{-1} - G^{-1}|^2  dy  \\
%& = \int_X |x-G^{-1}(F(x))|^2 f(x)dx.
%\end{aligned}
%\eq
\bq\label{eq:myOT1D}
W_2^2(f,g)  = \int_0^1 |F^{-1} - G^{-1}|^2  dy = \int_X |x-G^{-1}(F(x))|^2 f(x)dx.
\eq
The corresponding Fr\'{e}chet derivative is
\bq~\label{eq:1D_ADS_C}
\begin{split}
\frac{\partial W_2^2(f,g)}{\partial f} = & \left( \int_t^{T_0}-2(s-G^{-1}(F(s))\frac{dG^{-1}(y)}{dy}\biggr\rvert_{y=F(s)}  f(s) ds \right) dt \\ & +
 |t-G^{-1}(F(t))|^2 dt.
\end{split}
\eq
The Fr\'{e}chet derivative of the objective function is often referred to as the adjoint source term in the adjoint wave equation~\eqref{eq:FWI_adj}. It is a key element of computing the model gradient $\frac{\partial J}{\partial m}$ based on the adjoint-state method~\cite{Plessix}.
%This adjoint source term in the discrete 1D setting can be computed  as
%\bq\label{eq:1D_ADS_D}
%\begin{split}
% \left[U\ \diag \left(\frac{-2 f(t) dt}{g(G^{-1}\circ F (t))}  \right)   \right] (t-G^{-1} \circ F(t)) dt + |t-G^{-1}\circ F(t)|^2  dt,
%\end{split}
%\eq
%where $U$ is the upper triangular matrix whose non-zero components are 1.

%%%%%%%%%%%%%%%%%%%%%%%%%%%%%%%%%%%%%%%%%
\subsubsection{Computational Optimal Transport in Higher Dimensions}
Optimal transport is a well-studied subject in mathematical analysis while the computation techniques are comparatively underdeveloped. 
In general, the computation for optimal transport becomes very challenging in higher dimensions. There are combinatorial techniques that typically are computationally costly in higher dimensions, for example, the Hungarian algorithm~\cite{kuhn1955hungarian}. However, there are recent results on speeding up the numerical methods, especially for large-scale problems. 

Many numerical methods were proposed based on the Benamou-Brenier fluid formulation~\cite{BenBre,Li2016,ryu2018unbalanced}, the optimal transport problem with regularizers~\cite{benamou2015iterative,essid2018quadratically}, the dual formulation based on linear programming~\cite{Oberman2015,schmitzer2016sparse,liu2018multilevel}, as well as the solution to a \MA partial differential equation~\cite{benamou2014numerical,benamou2017minimal}. In particular, we discuss seismic inversion based on the \MA equation in Section~\ref{sec:MA}. 

These fast computational algorithms and new thrusts of research efforts are helping to translate attractive theoretical properties of optimal transport into elegant and scalable tools for a wide variety of applications involving science and engineering. On the other hand, it is important and of great research interest to quantify the trade-off in performance between the acceleration gained from the optimal transport based methods, and the slowdown due to the higher computational cost.

%%%%%%%%%%%%%%%%%%%%%%%%%%%%%%%%%%%%%%%%%

\section{Large-Scale Seismic Inversion Based on the \MA Equation}\label{sec:MA}
Optimal transport with the quadratic cost function $c(x,y) = |x-y|^2$ can be rigorously related to the \MA equation~\cite{brenier1991polar,KnottSmith}, which enables the construction of more efficient methods for computing the quadratic Wasserstein metric. We present a monotone finite difference \MA solver, proven to converge to the viscosity solution~\cite{barles1991convergence,FroeseTransport}. 
Based on this solver, we can compute the $W_2$ distance and the Fr\'{e}chet derivative required by the adjoint-state method (see Appendix~\ref{sec:adjoint_state_method}) to compute the gradient of the model parameters. The results of inversions based on the \MA solver are compared with ones that are produced by using the $L^2$ norm to measure the misfit between the synthetic data and the observed true data.
%%%%%%%%%%%%%%%%%%%%%%%%%%%%%%%%%%%%%%%%%
\subsection{Optimal Transport and the \MA Equation}
If the optimal transport map $T(x)$  is sufficiently smooth and $\det(\nabla T(x) ) \neq 0$, Definition~\ref{def:mass_preserve} naturally leads to the requirement
\bq\label{eq:mass_preserve2}
f(x) = g(T(x))\det(\nabla T(x)).
\eq 
As discussed in Section~\ref{sec:convexity}, the optimal transport map is cyclically monotone and a cyclically monotone mapping is equivalent to the gradient of a convex function~\cite{brenier1991polar,KnottSmith}.  Making the substitution $T(x) = \nabla u(x)$ into the constraint~\eqref{eq:mass_preserve2} leads to the \MA equation
\bq \label{eq:MAA}
\det (D^2 u(x)) = \frac{f(x)}{g(\nabla u(x))}, \quad u \text{ is convex}.
\eq
In order to compute the misfit between distributions $f$ and $g$, we first compute the optimal map $T(x) = \nabla u(x)$ via the solution of this \MA equation coupled to the non-homogeneous Neumann boundary condition 
\bq\label{eq:BC}
\nabla u(x) \cdot \nu = x\cdot \nu, \,\, x \in \partial X.
\eq
The squared $W_2$ distance is then given by
\bq\label{eq:WassMA}
W_2^2(f,g) = \int_X f(x)\abs{x-\nabla u(x)}^2\,dx.
\eq
%%%%%%%%%%%%%%%%%%%%%%%%%%%%%%%%%%%%%%%%%%
%\subsection{The Viscosity Solution}
%Although the \MA equation is a second-order PDE, there is no guarantee that the classical $C^2$ solution always exists. 
%If we consider the following general \MA equation:
%\bq~\label{eq:generalMA}
%\det (D^2 u(x)) = f(x, u , Du),
%\eq
%with homogeneous Dirichlet boundary condition $u=0$ on $\partial \Omega$, it is well-known that there exists a classical convex solution $u \in C^2(\Omega) \cup C(\overline{\Omega})$, when $f$ is strictly positive and sufficiently smooth~\cite{Caffarelli1990}. When the assumptions no longer hold, one can solve for the viscosity solution, which occurs naturally if $f$ in \eqref{eq:generalMA} is continuous.
%
%\begin{definition} [Viscosity solution]
%Let $u\in C(\Omega)$ be a convex function and $f\in C(\Omega)$, $f\geq 0$. The function $u$ is a viscosity subsolution (supersolution) of \eqref{eq:generalMA} in $\Omega$ if whenever convex function $\phi \in C^2(\Omega)$ and $x_0 \in \Omega$ are such that $(u-\phi)(x) \leq (\geq )(u-\phi)(x_0) $ for all $x$ in the neighborhood of $x_0$, then we must have
%\bq
%\det (D^2\phi(x_0)) \leq (\geq) f(x_0).
%\eq
%The function $u$ is a viscosity solution if it is both a viscosity subsolution and supersolution.
%\end{definition}
%%%%%%%%%%%%%%%%%%%%%%%%%%%%%%%%%%%%%%%%%
\subsection{The Finite Difference \MA solver}
As we have seen for the quadratic Wasserstein distance, the optimal map can be computed via the solution of a \MA partial differential equation~\cite{benamou2014numerical}. This approach has the advantage of drawing on the well-developed field of numerical analysis of PDE. We solve the \MA equation numerically for the viscosity solution using an almost-monotone finite difference method, relying on the following reformulation of the \MA operator, which automatically enforces the convexity constraint~\cite{FroeseTransport}.

%The scientific reason for using monotone type schemes follows from the following theorem by Barles and Souganidis~\cite{barles1991convergence}:
%\begin{theorem}[Convergence of Approximation Schemes~\cite{barles1991convergence}]
%Any consistent, stable, monotone approximation scheme  to the solution of fully nonlinear second-order elliptic or parabolic PDE converges uniformly on compact subsets to the unique viscosity solution of the limiting equation, provided this equation satisfies a  comparison principle.
%\end{theorem}

The numerical scheme of~\cite{benamou2014numerical} uses the theory of~\cite{barles1991convergence} to construct a convergent discretization of the \MA equation~\eqref{eq:MAA} as stated in Theorem~\ref{thm:MA_convergence}. A variational characterization of the determinant on the left hand side, which also involves the negative part of the eigenvalues, was proposed as the following equation:
\begin{multline}\label{eq:MA_convex}
{\det}(D^2u) = \\ \min\limits_{\{v_1,v_2\}\in V}\left\{\max\{u_{v_1,v_1},0\} \max\{u_{v_2,v_2},0\}+\min\{u_{v_1,v_1},0\} + \min\{u_{v_2,v_2},0\}\right\},
\end{multline}
where $V$ is the set of all orthonormal bases for $\R^2$.  

Equation~\eqref{eq:MA_convex} can be discretized by computing the minimum over finite number of directions $\{\nu_1,\nu_2\}$, which may require the use of a wide stencil.  %For simplicity and brevity, we describe a low-order version of the scheme and refer to~\cite{FroeseTransport,FOFiltered} for complete details.  In practice, this simplified scheme is sufficient for obtaining accurate inversion results.
%The scheme relies on the finite difference operators
%\begin{align*}
%[\Dt_{x_1x_1}u]_{ij} &= \frac{1}{dx^2} 
%\left(
%{u_{i+1,j}+u_{i-1,j}-2u_{i,j}}
%\right)
%\\
%[\Dt_{x_2x_2}u]_{ij} &= \frac{1}{dx^2}
%\left(
%u_{i,j+1}+u_{i,j-1}-2u_{i,j}
%\right)
%\\
%[\Dt_{x_1x_2}u]_{ij} &= \frac{1}{4dx^2}
%\left(
%u_{i+1,j+1}+u_{i-1,j-1}-u_{i+1,j-1}-u_{i-1,j+1}
%\right)
%\\
%%[\Dt_{x_1x_2}u]_{ij} &= \frac{1}{4dx^2}
%%\left(
%%u_{i+1,j+1}+u_{i-1,j-1}-u_{i-1,j+1}-u_{i+1,j-1}
%%\right)
%%\\
%[\Dt_{x_1}u]_{ij} &= \frac{1}{2dx}
%\left(
%u_{i+1,j}-u_{i-1,j}
%\right)\\
%[\Dt_{x_2}u]_{ij} &= \frac{1}{2dx}
%\left(
%u_{i,j+1}-u_{i,j-1}
%\right)\\
%[\Dt_{vv}u]_{ij} &= \frac{1}{2dx^2}\left(u_{i+1,j+1}+u_{i-1,j-1}-2u_{i,j}\right)\\
%[\Dt_{\vp\vp}u]_{ij} &= \frac{1}{2dx^2}\left(u_{i+1,j-1}+u_{i+1,j-1}-2u_{i,j}\right)\\
%[\Dt_{v}u]_{ij} &= \frac{1}{2\sqrt{2}dx}\left(u_{i+1,j+1}-u_{i-1,j-1}\right)\\
%[\Dt_{\vp}u]_{ij} &= \frac{1}{2\sqrt{2}dx}\left(u_{i+1,j-1}-u_{i-1,j+1}\right).
%\end{align*}
In the low-order version of the scheme, the minimum in~\eqref{eq:MA_convex} is approximated using only two possible values.  The first uses directions aligning with the grid axes.
\begin{multline}\label{MA1}
MA_1[u] = \max\left\{\Dt_{x_1x_1}u,\delta\right\}\max\left\{\Dt_{x_2x_2}u,\delta\right\} \\+ \min\left\{\Dt_{x_1x_1}u,\delta\right\} + \min\left\{\Dt_{x_2x_2}u,\delta\right\} - f / g\left(\Dt_{x_1}u, \Dt_{x_2}u\right) - u_0,
\end{multline}
where $\Dt_{x_1}$, $\Dt_{x_2}$, $\Dt_{x_1x_1}$ and $\Dt_{x_2x_2}$ are the first order and second order finite difference operators. If we assume $dx$ is the resolution of the grid, $\delta>K\Delta x/2$ is a small parameter that bounds second derivatives away from zero. $u_0$ is the solution value at a fixed point in the domain. $K$ is the Lipschitz constant in the $y$-variable of $f(x)/g(y)$.

For the second value, we rotate the axes to align with the corner points in the stencil, which leads to
\begin{multline}\label{MA2}
MA_2[u] = \max\left\{\Dt_{vv}u,\delta\right\}\max\left\{\Dt_{\vp\vp}u,\delta\right\} + \min\left\{\Dt_{vv}u,\delta\right\} + \min\left\{\Dt_{\vp\vp}u,\delta\right\}\\ - f / g\left(\frac{1}{\sqrt{2}}(\Dt_{v}u+\Dt_{\vp}u), \frac{1}{\sqrt{2}}(\Dt_{v}u-\Dt_{\vp}u)\right) - u_0.
\end{multline}
Then the monotone approximation of the \MA equation is
\bq\label{eq:MA_compact} M_M[u] \equiv -\min\{MA_1[u],MA_2[u]\} = 0. \eq
We also define a second-order approximation, obtained from a standard centred difference discretization,
\bq\label{eq:MA_nonmon} M_N[u] \equiv -\left((\Dt_{x_1x_1}u)(\Dt_{x_2x_2}u)-(\Dt_{x_1x_2}u^2)\right) + f/g\left(\Dt_{x_1}u,\Dt_{x_2}u\right) + u_0 = 0.\eq
These are combined into an almost-monotone approximation of the form
\bq\label{eq:MA_filtered} M_F[u] \equiv M_M[u] + \epsilon S\left(\frac{M_N[u]-M_M[u]}{\epsilon}\right) \eq
where $\epsilon$ is a small parameter and the filter $S$ is designed to stick to the monotone scheme if $M_M[u]$ and $M_N[u]$ have very different values. Neumann boundary conditions are implemented using standard one-sided differences.  As described in~\cite{engquist2016optimal,FroeseTransport}, the formal Jacobian $\nabla M_F[u]$ of the scheme can be obtained exactly.

%This finite difference approximation effectively replaces the \MA equation with a large system of nonlinear algebraic equations, which can be solved using Newton's method.
%We compute the Newton updates requires inverting sparse $M$-matrices, which can be done efficiently. 

\begin{theorem}[Convergence to Viscosity Solution~{\cite[Theorem 4.4]{FroeseTransport}}]\label{thm:MA_convergence}
Let the \MA equation \eqref{eq:MAA} have a unique viscosity solution and let $g>0$ be Lipschitz continuous on $\R^d$. Then the solutions of the scheme \eqref{eq:MA_filtered} converge to the viscosity solution of \eqref{eq:MAA} with a formal discretization error of $\bO(Lh^2)$ where $L$ is the Lipschitz constant of the right hand side and $h$ is the resolution of the grid.
\end{theorem}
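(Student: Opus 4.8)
The plan is to invoke the general convergence framework of Barles and Souganidis~\cite{barles1991convergence}, which guarantees that any approximation scheme that is consistent, monotone, and stable converges locally uniformly to the unique viscosity solution of a degenerate elliptic equation that admits a comparison principle. The hypothesis that~\eqref{eq:MAA} has a unique viscosity solution supplies exactly the comparison principle needed, so the task reduces to verifying these three structural properties for the filtered scheme $M_F$ in~\eqref{eq:MA_filtered}, and then to extracting the stated $\bO(Lh^2)$ rate from the consistency estimate.

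First I would establish consistency. For a smooth test function $\phi$, I would Taylor-expand each finite difference operator $\Dt_{x_1x_1}$, $\Dt_{x_2x_2}$, $\Dt_{vv}$, $\Dt_{\vp\vp}$ about the evaluation point; the centred second differences reproduce the corresponding directional second derivatives with an $\bO(h^2)$ truncation error. Substituting these into~\eqref{MA1}--\eqref{MA2} and using the variational identity~\eqref{eq:MA_convex}, the minimum over the two stencil choices reproduces $\det(D^2\phi)$ whenever $\phi$ is convex, while the max/min truncation at $\delta$ becomes inactive in the limit since $\delta = \bO(h)\to 0$. The right-hand side contributes the term $f/g(\nabla\phi)$, whose error is controlled by the Lipschitz constant $L$ of $f(x)/g(y)$ in the $y$-variable, producing the $\bO(Lh^2)$ factor; since the viscosity solution need not be $C^2$, this rate is only formal, obtained by inserting the exact solution into the truncation estimate as if it were smooth. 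Finally, the filter perturbation $\epsilon S((M_N-M_M)/\epsilon)$ is uniformly bounded by $\epsilon\|S\|_\infty$ because $S$ is a bounded function, so it vanishes as $\epsilon\to 0$ and does not alter the limiting operator; this is precisely the sense in which the scheme is ``almost monotone.''

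Next I would verify monotonicity and stability. A scheme is monotone (degenerate elliptic) if it is nondecreasing in the centre value $u(x)$ and nonincreasing in each neighbouring value; the max/min reformulation~\eqref{eq:MA_convex}, discretized with positive-coefficient centred differences on the wide stencil, is built to satisfy this for the monotone part $M_M$, and the $-u_0$ term together with fixing the solution at one point removes the additive translation degeneracy of the boundary value problem. Stability would follow from a discrete comparison principle for $M_M$, yielding uniform $L^\infty$ bounds on the numerical solution independent of $h$ once the Neumann data~\eqref{eq:BC} are imposed through one-sided differences. Combining consistency, monotonicity, and stability, the half-relaxed limits $\overline{u} = \limsup^* u_h$ and $\underline{u} = \liminf_* u_h$ are respectively a viscosity sub- and supersolution of~\eqref{eq:MAA}; the comparison principle then forces $\overline{u}\le\underline{u}$, and hence local uniform convergence to the viscosity solution follows.

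The main obstacle will be reconciling the filter with the strict monotonicity demanded by the Barles--Souganidis theorem: the added second-order term $M_N$ in~\eqref{eq:MA_nonmon} is not itself monotone, so one must show that $S$ activates only on a set that shrinks as $h\to 0$ and that the filtered operator $M_F$ still produces correct sub- and supersolutions in the half-relaxed limit. A secondary difficulty is the gradient dependence of the right-hand side $f/g(\nabla u)$, which makes the operator genuinely nonlinear and requires the Lipschitz bound on $g$ to control both the consistency estimate and the discrete comparison argument; the boundary treatment of the second boundary value (optimal transport) problem is likewise delicate, and must be arranged so that the Neumann condition is consistent in the viscosity sense.
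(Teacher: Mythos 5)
Your outline is essentially the proof used in the sources the paper cites: the paper itself states Theorem~\ref{thm:MA_convergence} without proof, importing it from~\cite[Theorem 4.4]{FroeseTransport} and~\cite{FOFiltered}, where convergence is obtained exactly as you describe --- Barles--Souganidis applied to the monotone part $M_M$, with the filter contribution bounded by $\epsilon\|S\|_\infty$ and hence a vanishing perturbation, and the formal $\bO(Lh^2)$ rate read off from the Taylor consistency estimate. No substantive gap; your approach matches the cited argument.
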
 

%%%%%%%%%%%%%%%%%%%%%%%%%%%%%%%%%%%%%%%%%
%\begin{figure}
%\centering
%  \subfloat[$u_1$]{\includegraphics[width=0.25\textwidth]{MA-u1}\label{fig:MA-u1}}
%  \subfloat[$f_1$]{\includegraphics[width=0.25\textwidth]{MA-f1}\label{fig:MA-f1}}
%    \subfloat[$u_2$]{\includegraphics[width=0.25\textwidth]{MA-u2}\label{fig:MA-u2}}
%  \subfloat[$f_2$]{\includegraphics[width=0.25\textwidth]{MA-f2}\label{fig:MA-f2}}
%  \caption{Two \MA examples: (a)~solution $u_1$ and (b) its right hand side $f_1$; (c)~solution $u_2$ and (d) its right hand side $f_2$.}
%  \label{fig:MA-examples}
%\end{figure}
%%%%%%%%%%%%%%%%%%%%%%%%%%%%%%%%%%%%%%%%%

Once the discrete solution $u$ is computed, the squared Wasserstein metric is approximated via
\bq\label{eq:WassDiscrete}  
W_2^2(f,g) \approx \sum\limits_{j=1}^n (x_j-D_{x_j}u)^T\diag(f)(x_j-D_{x_j}u) dt,
\eq
where $n$ is the dimension of the data $f$ and $g$.
Then the gradient of the discrete squared Wasserstein metric can be expressed as
\bq 
%\begin{split}
\frac{\partial W_2^2(f,g) }{\partial f} = \sum\limits_{j=1}^n  \left[-2\nabla M_F^{-1}[u]^TD_{x_j}^T\diag(f) \right](x_j - D_{x_j}u)dt  + \sum\limits_{j=1}^n  |x_j-D_{x_j}u|^2dt.
%\end{split}
\eq
Similar to Equation~\eqref{eq:1D_ADS_C}, this term is a discretized version of the Fr\'{e}chet derivative of the misfit function~\eqref{eq:W22D} with respect to the synthetic data $f$, i.e., the source term $\frac{\partial J}{\partial f}$ in the adjoint wave equation~\eqref{eq:FWI_adj}.

%%%%%%%%%%%%%%%%%%%%%%%%%%%%%%%%%%%%%%%%%

\begin{figure}
\centering
  \subfloat[]{\includegraphics[width=0.25\textwidth]{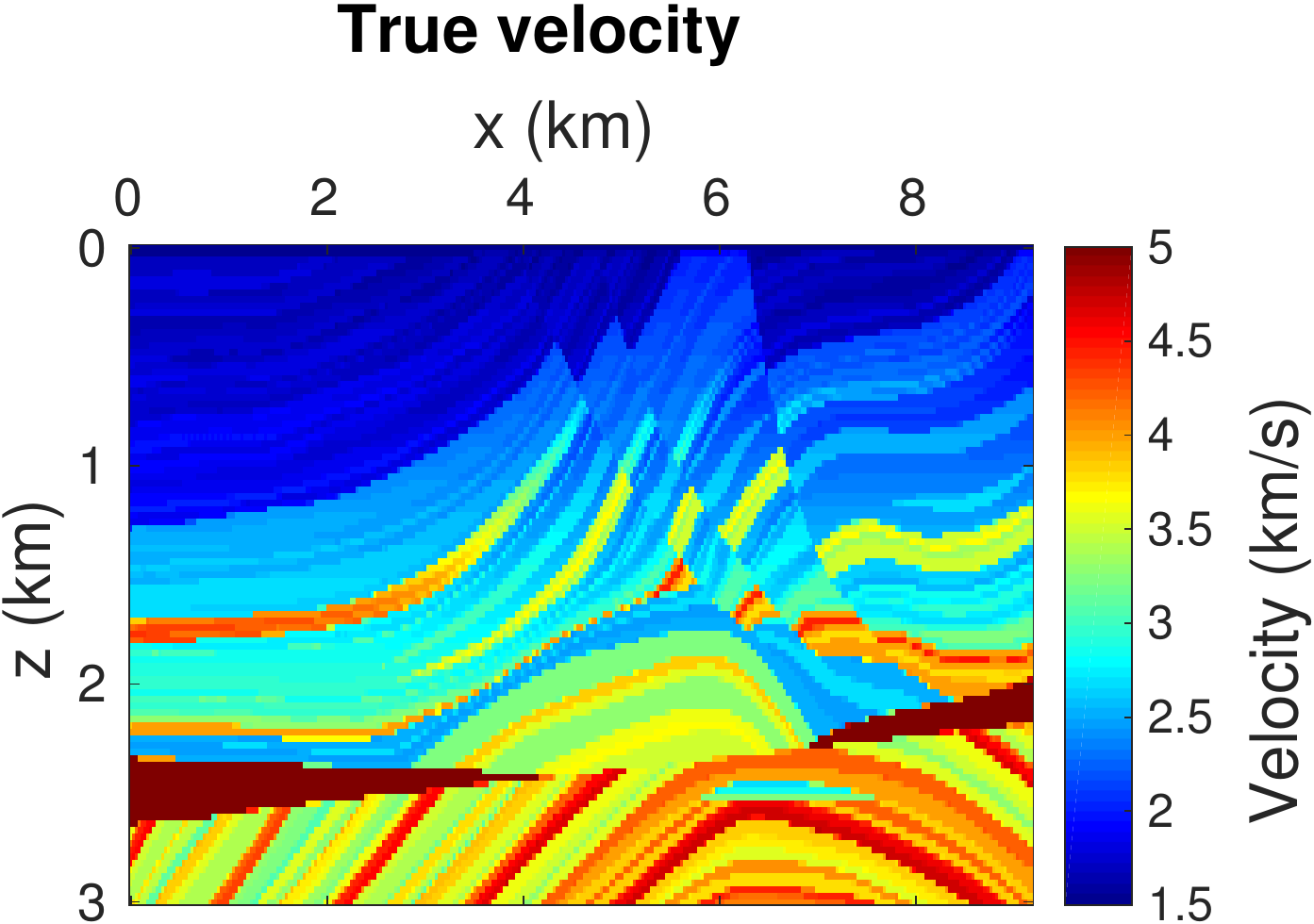}\label{fig:marm2_true}}
  \subfloat[]{\includegraphics[width=0.25\textwidth]{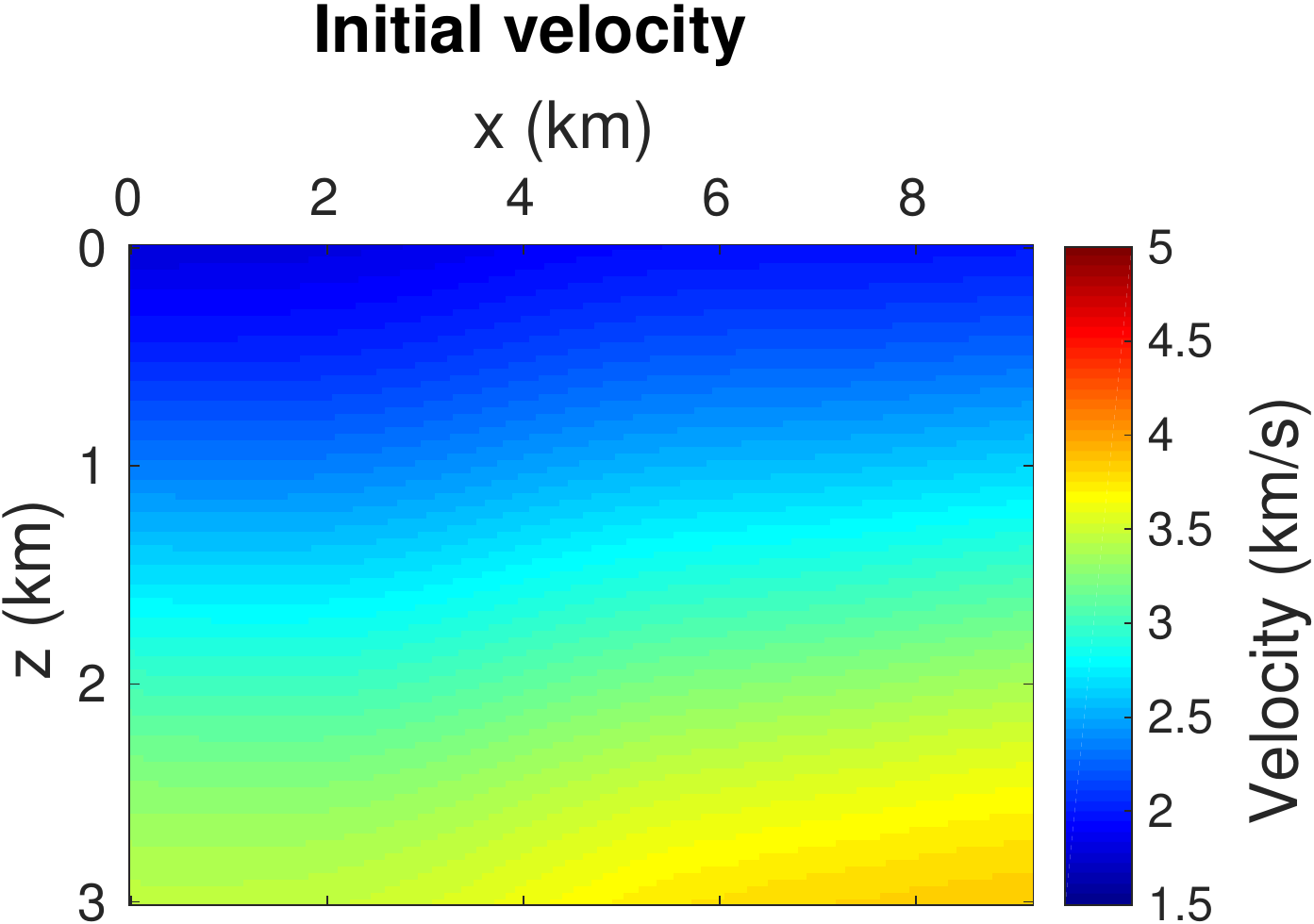}\label{fig:marm2_v0}}
    \subfloat[]{\includegraphics[width=0.25\textwidth]{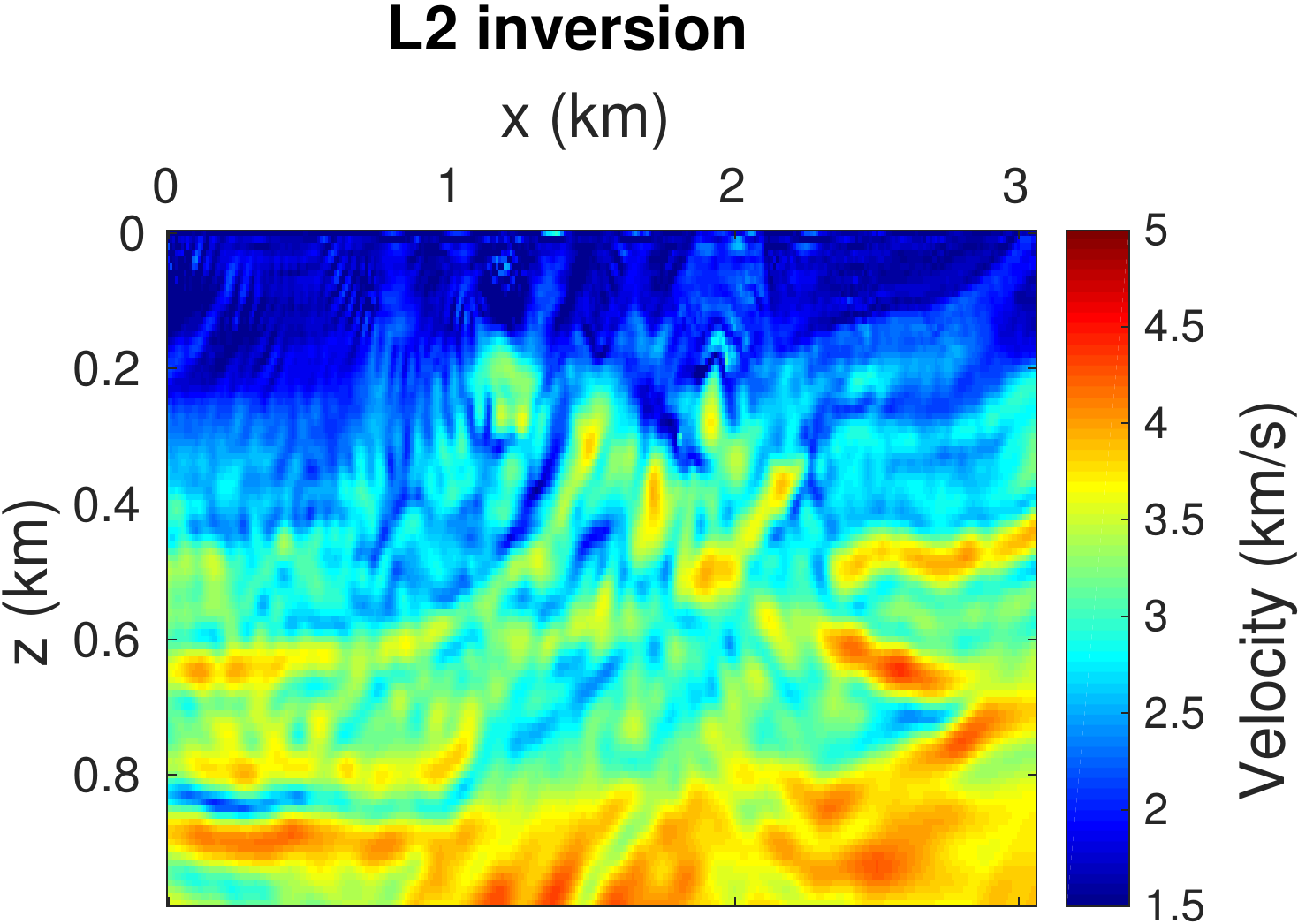}\label{fig:marm_L2}}
  \subfloat[]{\includegraphics[width=0.25\textwidth]{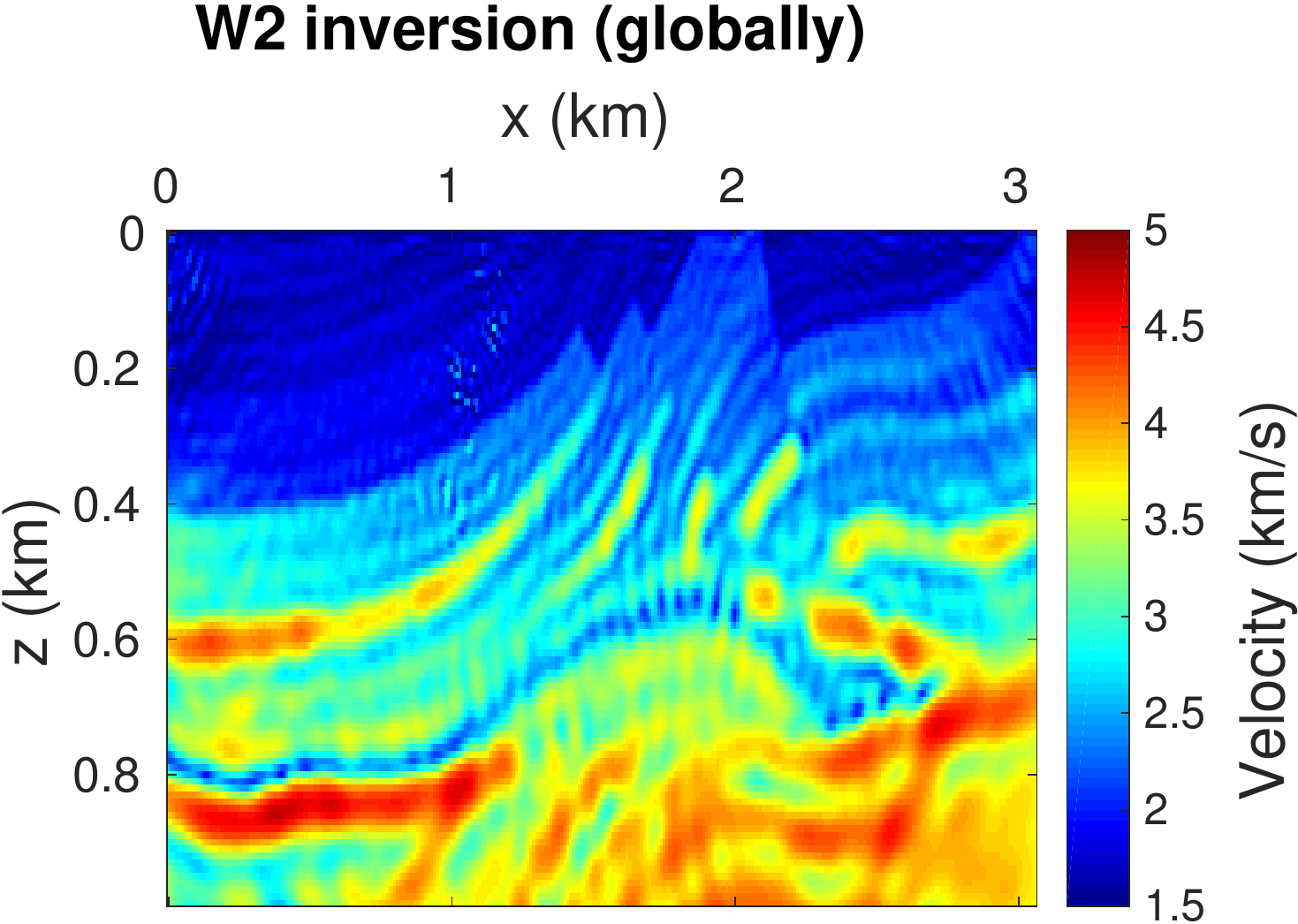}\label{fig:marm_w2_2D}}
  \caption{Scaled Marmousi model: (a)~true velocity; (b)~inital velocity; (c)~$L^2$ inversion result and (d)~$W_2$ inversion result based on the \MA solver.}
  \label{fig:marm2_true,marm2_v0}
\end{figure}

\subsection{Numerical Results}~ \label{sec:MA_Marm}
In this section, we invert a scaled Marmousi model using~\eqref{eq:W22D} as the objective function. The Marmousi model is based on a profile through the North Quenguela Trough in the Cuanza Basin in Angola~\cite{versteeg1994marmousi}. It has been a standard benchmark model for exploration geophysics since the 1980s. The $W_2$ distance in this test is computed based on the solution to the \MA equation~\eqref{eq:MAA}. Figure~\ref{fig:marm2_true} is the P-wave velocity of the true Marmousi model, but in this experiment, a scaled model is used (1 km in depth and 3 km in width). The inversion starts from an initial model (a scaled version of Figure~\ref{fig:marm2_v0}) that is the true velocity smoothed by a Gaussian filter. It no longer has all the fine structures of the true model. We place 11 evenly spaced sources on top at 50 m depth, and 307 receivers on top at the same depth with a 10 m fixed acquisition. The discretization of the forward wave equation is 10 m in the $x$ and $z$ directions and 1 ms in time. The source is a Ricker wavelet, which is the second derivative of the Gaussian function with a peak frequency of 15 Hz, and a bandpass filter applied to remove the frequency components from 0 to 2 Hz. 

We use L-BFGS, a quasi-Newton method as the optimization algorithm~\cite{liu1989limited}. We keep ten vectors in memory for the optimization scheme. Inversions are terminated after 200 iterations. Figure~\ref{fig:marm_L2} shows the final inversion result using the traditional least-squares method whose result demonstrates spurious high-frequency artifacts due to a point-by-point comparison of amplitude. The final inversion result using the $W_2$ distance is shown in Figure~\ref{fig:marm_w2_2D}. We solve the \MA equation~\eqref{eq:MAA} numerically in each iteration to compute the $W_2$ distance. Figure~\ref{fig:marm_w2_2D} shows that $W_2$-based inversion mitigates the local minima issue suffered by the traditional $L^2$ norm. It is a result of the ideal convexity of the $W_2$ distance which we will further discuss in Section~\ref{sec:challenge1}.

%\blue{
In numerical experiments carried out in~\cite{FroeseTransport}, the total computational complexity required to solve the Monge-Amp\`ere equation varied from $\bO(N)$ to $\bO(N^{1.3})$ where $N$ was the total number of grid points. We observed the same complexity when solving~\eqref{eq:MAA}, where $f$ and $g$ are normalized wavefields. Figure~\ref{fig:MA-1iter-time} shows the CPU time of solving one single \MA equation to obtain the optimal transport map between wavefields of different grid size in time ($nt$) and space ($nx$). The slope of the log-log plot shows that the complexity is about $\bO(N)$. Figure~\ref{fig:MA-20iter-time} presents the CPU time of running 20 iterations of inversion in both $L^2$ norm and the $W_2$ distance. Although the \MA-based $W_2$ inversion takes 3 to 4 times longer than the $L^2$-based inversion, the overall complexity is $\bO(N)$ for both objective functions and the $W_2$-based FWI helps mitigate the local minima that the least-squares method suffers from. 

\begin{figure}
\centering
  \subfloat[]{\includegraphics[width=0.45\textwidth]{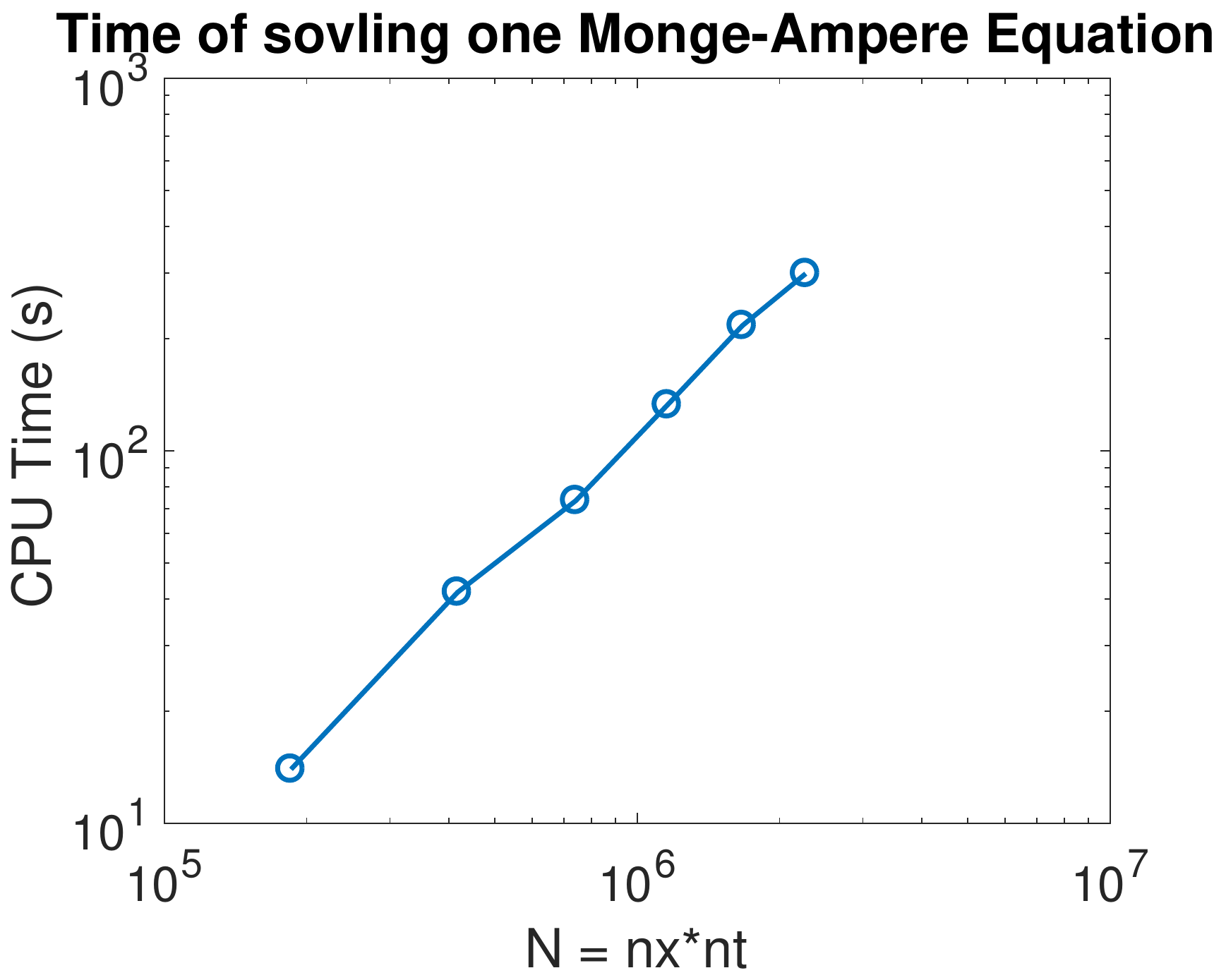}\label{fig:MA-1iter-time}}
  \subfloat[]{\includegraphics[width=0.45\textwidth]{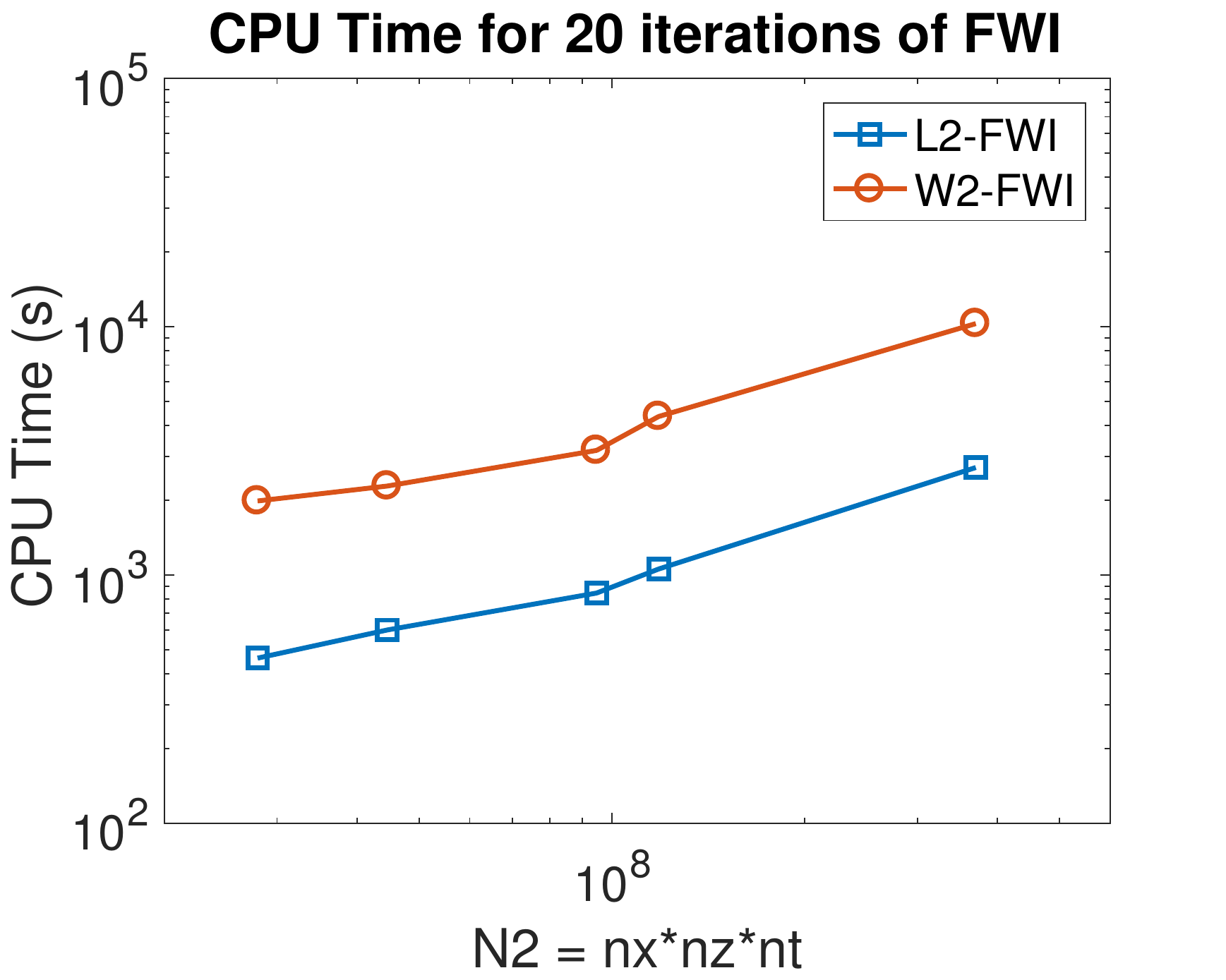}\label{fig:MA-20iter-time}}
  \caption{(a) CPU time of solving one \MA equation; (b) CPU time of running 20 iterations of inversions.}
  \label{fig:MA-time}
\end{figure}

%}
%%%%%%%%%%%%%%%%%%%%%%%%%%%%%%%%%%%%%%%%%
\section{Challenge One: Inversion with Local Minima} \label{sec:challenge1}
Two primary concerns in full-waveform inversion (FWI) are the well-posedness of the underlying model recovery problem and the suitability of the misfit between the observed data $g$ and the synthetic data $f$ for minimization.
%The $L^2$ norm is often used to measure the misfit, which typically generates many local minima and is thus unsuitable for minimization.  
Several desirable properties of the quadratic Wasserstein metric ($W_2$) relating to convexity rigorously demonstrate the advantages of this transport-based idea in seismic inverse problems.
%%%%%%%%%%%%%%%%%%%%%%%%%%%%%%%%%%%%%%%%%
\subsection{Convexity of $W_2$} \label{sec:convexity}
As we demonstrated in~\cite{engquist2016optimal}, the squared Wasserstein metric has several properties that make it attractive as a choice for misfit function.  One highly desirable feature is its convexity with respect to several parameterizations that occur naturally in seismic waveform inversion~\cite{yang2017application}. For example, variations in the  wave velocity lead to  simulated $f$ that are derived from shifts,
\bq\label{eq:shift}
f(x;s) = g(x+s\eta), \quad \eta \in \R^n,
\eq
 or dilations,
\bq\label{eq:dilation}
f(x;A) = g(Ax), \quad A^T = A, \, A > 0,
\eq
 applied to the observation $g$.
Variations in the strength of a reflecting surface or the focusing of seismic waves can also lead to local rescalings of the form
\bq\label{eq:rescaleLocal}
f(x;\beta) = \begin{cases}  \beta g(x), & x \in E\\ g(x), & x \in \R^n\backslash E.\end{cases}
\eq

Proving the convexity of $W_2^2$ follows from the interpretation of the misfit as a transportation cost, with the underlying transportation cost exhibiting a great deal of structure.  In particular, the cyclical monotonicity of the transport map $T(x)$ leads readily to estimates of 
\[ W_2^2(f({\lambda m_1 + (1-\lambda)m_2}), g), \quad 0 < \lambda < 1,  \]
which in turn yields the desired convexity results.  This was studied in detail in~\cite{engquist2016optimal}, where the following theorem was proved.

\begin{theorem}[Convexity of squared Wasserstein metric~{\cite{engquist2016optimal}}]\label{thm:convexity}
The squared Wasserstein metric $W_2^2(f(m),g)$ is convex with respect to the model parameters $m$ corresponding to a shift~$s$ in~\eqref{eq:shift}, the eigenvalues of a dilation matrix~$A$ in~\eqref{eq:dilation}, or the local rescaling parameter~$\beta$ in~\eqref{eq:rescaleLocal}.
\end{theorem}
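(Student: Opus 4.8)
\section*{Proof proposal}

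The plan is to read the misfit as a transportation cost and lean on the characterization recalled in Section~\ref{sec:convexity} and used in~\eqref{eq:MAA}: the optimal map is the gradient of a convex potential, equivalently the optimal plan is cyclically monotone. For each of the three parameterizations the dependence of $f(m)$ on $m$ is rigid enough that this structure either pins down the optimal map in closed form or makes $f(m)$ affine as a measure, and convexity of $W_2^2(f(m),g)$ then drops out of~\eqref{eq:WassMA}. I would prove the three cases separately, since they call for genuinely different mechanisms.

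First I would dispatch the shift~\eqref{eq:shift}. The translation $T(x)=x+s\eta$ satisfies $T_\# f(x;s)=g$ and is the gradient of the convex function $\frac{1}{2}|x|^2+s\,\eta\cdot x$, so it is the optimal map; then~\eqref{eq:WassMA} gives $W_2^2(f(x;s),g)=\int |x-T(x)|^2 f(x;s)\,dx=s^2|\eta|^2$, manifestly convex in $s$. The dilation~\eqref{eq:dilation} is the same idea with a linear map: since $A$ is symmetric positive definite, $x\mapsto Ax$ is the gradient of the convex quadratic $\frac{1}{2}x^T A x$ and, once the Jacobian is absorbed into the normalization so the total masses agree, transports $f(x;A)$ to $g$, hence is optimal. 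Substituting into~\eqref{eq:WassMA} and changing variables through this map should reduce the misfit to a weighted sum of squares $\sum_i(\lambda_i-1)^2 M_i$, where the $\lambda_i$ are the eigenvalues and $M_i=\int u_i^2\,g(u)\,du$ are the second moments of $g$ in the eigenbasis; this is a sum of one-dimensional convex functions and so jointly convex in the eigenvalues. The only real bookkeeping here is the normalization Jacobian and the simultaneous diagonalization.

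The local rescaling~\eqref{eq:rescaleLocal} is the case I expect to be the main obstacle, because the optimal map is no longer affine: raising $\beta$ injects mass into $E$ that must be redistributed across the whole domain to match $g$. Here I would abandon the explicit-map route and instead use that $\beta\mapsto f(x;\beta)$ is affine as a measure, $f(x;\beta_\lambda)=\lambda f(x;\beta_1)+(1-\lambda)f(x;\beta_2)$ for $\beta_\lambda=\lambda\beta_1+(1-\lambda)\beta_2$, together with the convexity of $\nu\mapsto W_2^2(\nu,g)$ under linear interpolation of the first marginal. This last inequality is the concrete form of the estimate on $W_2^2(f(m_\lambda),g)$ alluded to above: averaging the optimal couplings $\pi_1,\pi_2$ for $(f(x;\beta_1),g)$ and $(f(x;\beta_2),g)$ yields an admissible coupling $\lambda\pi_1+(1-\lambda)\pi_2$ for $(f(x;\beta_\lambda),g)$ whose quadratic cost is the corresponding convex combination, so $W_2^2(f(x;\beta_\lambda),g)\le \lambda W_2^2(f(x;\beta_1),g)+(1-\lambda)W_2^2(f(x;\beta_2),g)$; composing with the affine map $\beta\mapsto f(x;\beta)$ gives convexity in $\beta$. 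The delicate point, where I would spend most of the effort, is that $f(x;\beta)$ is a probability density only after dividing by the affine normalizer $Z(\beta)=1+(\beta-1)\int_E g$, which breaks the exact affinity and must be tracked carefully; one either restricts to the range of $\beta$ relevant to the application or reparameterizes so the averaging argument survives. In one dimension this can instead be verified head-on by differentiating the explicit representation $W_2^2=\int_0^1|F^{-1}-G^{-1}|^2\,dy$ from~\eqref{eq:myOT1D} and Theorem~\ref{thm:OT1D} twice in $\beta$, where the monotone rearrangement makes the sign of the second variation transparent and independently confirms the cyclical-monotonicity estimate.
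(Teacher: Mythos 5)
Your treatment of the shift and dilation cases is sound and is essentially the route the paper itself takes --- not in a proof of Theorem~\ref{thm:convexity}, which is only quoted from~\cite{engquist2016optimal}, but in the proof of the strengthened Theorem~\ref{thm: biconvex}: identify the candidate map, certify optimality via the convex-potential/cyclical-monotonicity characterization, and read convexity off an explicit quadratic. One caution on the dilation case: after your change of variables the misfit is $\sum_i(1/a_i-1)^2M_i$ in the eigenvalues $a_i$ of $A$, and $(1/a-1)^2$ is \emph{not} globally convex in $a$ (its second derivative changes sign at $a=3/2$). The clean expression $\sum_i(\lambda_i-1)^2M_i$ holds in the reciprocal eigenvalues $\lambda_i=1/a_i$, which is precisely why the paper parameterizes the dilation as $A=\diag(1/\lambda_1,\dots,1/\lambda_d)$ in~\eqref{eq:f_theta}; you must be explicit about which variables the convexity is asserted in, since in the eigenvalues of $A$ itself the claim as you have computed it would be false.

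The genuine gap is the local rescaling case~\eqref{eq:rescaleLocal}; you have located the difficulty but not closed it, and neither of your proposed repairs works. The averaging-of-couplings argument needs $\beta\mapsto f(\cdot;\beta)$ to be affine as a \emph{probability} measure, and after division by $Z(\beta)=1+(\beta-1)\int_E g$ it is not. Restricting the range of $\beta$ does not restore affinity. Reparameterizing by the mass $t(\beta)=\beta\int_E g\,/\,Z(\beta)$ does make the path affine, namely $\tilde f_\beta=t\,g\mathds{1}_E/\!\int_Eg+(1-t)\,g\mathds{1}_{E^c}/\!\int_{E^c}g$, and hence yields convexity in $t$ --- but $t(\beta)$ is concave and increasing, and a convex increasing function composed with a concave increasing one need not be convex: already the model $\phi(t)=(t-c)^2$ gives $\phi(t(\beta))\propto(\beta-1)^2/Z(\beta)^2$, which is increasing with a horizontal asymptote and therefore not convex for large $\beta$. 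So convexity in $\beta$ does not follow from your argument; it requires a separate estimate tied to the specific structure of the rescaling, e.g.\ the midpoint bound on $W_2^2(f(\lambda m_1+(1-\lambda)m_2),g)$ built from the two optimal maps that Section~\ref{sec:convexity} sketches and~\cite{engquist2016optimal} carries out. Your one-dimensional fallback via~\eqref{eq:myOT1D} is a reasonable plan but is not a proof as written: $G^{-1}\circ F_\beta$ depends on $\beta$ through both the rescaling on $E$ and the normalizer, and the sign of the second $\beta$-derivative is exactly the point at issue.
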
 

%\blue{
\subsection{Joint Convexity in Signal Translation and Dilation}
The translation and dilation in the wavefields are direct effects of variations in the velocity $v$, as can be seen from the D'Alembert's formula that solves the 1D wave equation~\cite{engquist2016optimal}. In particular, we will reformulate Theorem~\ref{thm:convexity} as a joint convexity of $W_2$ with respect to both signal translation and dilation and prove it in a more general setting. In practice, the perturbation of model parameters will cause both signal translation and dilation simultaneously, and the convexity with respect to both changes is an ideal property for gradient-based optimization.

We regard the normalized synthetic data and observed data as two probability densities $f = d\mu$ and $g =d\nu$ on $\mathbb{R}^d$ when using optimal transport for seismic inversion. Normalization is the first and the most important step in seismic inversions that turns oscillatory seismic signals into probability measures. It is one prerequisite of optimal transport~\cite{qiu2017full,metivier2018graph,Survey2, yang2017application}.
Since seismic signals are partial measurements of the boundary value in~\eqref{eq:FWD}, they are compactly supported in $\mathbb{R}^d$ and bounded from above. Hence, it is natural to assume $\int_{\mathbb{R}^d}\int_{\mathbb{R}^d} |x-y|^2 f(x)g(y)dxdy < +\infty$.  
The $W_2$ distance then has convexity in translation and dilation as the following Theorem~\ref{thm: biconvex} states. 

The proof of Theorem~\ref{thm: biconvex} relies on the concept of cyclical monotonicity, a powerful tool characterizing the general geometrical structure of the optimal tranposrt plan in the Kantorovich problem:
\bq~\label{eq:static2}
\inf_{\pi} I[\pi] = \bigg\{ \int_{X \times Y} c(x,y) d\pi\ |\ \pi \geq 0\ \text{and}\ \pi \in \Pi(\mu, \nu) \bigg\} ,
\eq
where $\Pi (\mu, \nu) =\{ \pi \in \mathcal{P}(X\times Y)\ |\ (P_X)_\# \pi = \mu, (P_Y)_\# \pi = \nu  \}$. Here $(P_X)$ and $(P_Y)$ denote the two projections, and $(P_X)_\# \pi$ and $(P_Y)_\# \pi $ are two measures obtained by pushing forward $\pi$ with these two projections. Different from~\eqref{eq:static}, Kantorovich relaxed the constraints~\cite{kantorovich1960mathematical}. Instead of searching for a map $T$, the transference plan $\pi$ is considered, which is also a measure supported by the product space $X\times Y$. Brenier~\cite{brenier1991polar} has proved that the optimal map in ~\eqref{eq:static} coincide with the optimal plan in~\eqref{eq:static2} in the sense that $\pi =(Id \times T)_\# \mu$ if $\mu$ gives no mass to small sets.

It has been proved that optimal plans for any cost function have cyclically monotone support~\cite{villani2008optimal}. In addition, the concept can be used to formulate a \textit{sufficient} condition of the optimal transport map~\cite{villani2008optimal,brenier1991polar,KnottSmith} under certain mild assumptions\cite{ambrosio2013user}. 
%If $c(x,y)$ is continuous and bounded from below~\cite[p10]{ambrosio2013user}, the cyclically monotonicity of the support of a plan is both necessary and sufficient conditions of optimality. 
\begin{definition}[Cyclical monotonicity]~\label{def: c-cyc}
We say that $\pi \subset X\times Y$ is cyclically monotone if for any $m\in\mathbb{N}^+$, $(x_i,y_i) \in \pi$, $1\leq i \leq m$, implies
\bq\label{eq:cyclical}
\sum_{i=1}^{m} c(x_i, y_i) \leq  \sum_{i=1}^{m} c(x_i, y_{i-1}),\ (x_0,y_0) \equiv (x_m,y_m).
\eq 
\end{definition}

Next, we will prove a stronger convexity result than Theorem~\ref{thm:convexity}. The following new theorem states a joint convexity in multiple variables with respect to both translation and dilation changes in the data. 
%Let $T$ be the optimal map that rearranges $f $ into $g$.
Assume that $s_k\in \mathbb{R},\ k = 1,\dots,d$ is a set of translation parameters and $\{e_k\}_{k=1}^d$ is the standard basis of the Euclidean space $\mathbb{R}^d$.  
$A=\diag (1/\lambda_1,\dots,1/\lambda_d)$ is a dilation matrix where $\lambda_k\in \mathbb{R}^+, k= 1,\dots,d$. 
We define $f_{\Theta}$ as jointly the translation and dilation transformation of function $g$ such that
\bq \label{eq:f_theta}
f_{\Theta}(x)=\det(A)g(A(x-\sum_{k=1}^d s_k e_k)).
\eq
We will prove the convexity in terms of the multivariable $\Theta = \{s_1, \dots, s_d, \lambda_1, \dots, \lambda_d\}$.  

\begin{theorem}[Convexity of $W_2$ in translation and dilation]
\label{thm: biconvex}
Assume that $f_{\Theta}$ is jointly the translation and dilation transformation of $g$ defined as Equation~\eqref{eq:f_theta}. The optimal map between $f_{\Theta}(x)$ and $g(y)$ is $y = T_{\Theta}(x)$ where $\langle T_{\Theta}(x), e_k \rangle = \frac{1}{\lambda_k} (\langle x , e_k \rangle - s_k), k= 1,\dots,d$. Moreover, $I(\Theta) = W_2^2(f_{\Theta}(x),g)$ is a convex function of the multivariable $\Theta$.
\end{theorem}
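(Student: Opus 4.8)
The plan is to split the statement into two tasks: first identify the optimal map $T_\Theta$, and then compute $I(\Theta)=W_2^2(f_\Theta,g)$ explicitly and read off its convexity. For the first task I would observe that the proposed map can be written compactly as $T_\Theta(x)=A(x-s)$ with $s=\sum_{k=1}^d s_k e_k$, and that $A=\diag(1/\lambda_1,\dots,1/\lambda_d)$ is symmetric positive definite because each $\lambda_k>0$. Hence $T_\Theta=\nabla\psi$ for the quadratic potential $\psi(x)=\tfrac12\langle Ax,x\rangle-\langle As,x\rangle$, which is convex. A direct change of variables $y=T_\Theta(x)$ (so $x=A^{-1}y+s$ and $g(y)\,dy=g(A(x-s))\det(A)\,dx=f_\Theta(x)\,dx$) shows that $T_\Theta$ is mass-preserving, i.e.\ $(T_\Theta)_\#f_\Theta=g$, using precisely the $\det(A)$ prefactor in~\eqref{eq:f_theta}. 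Since $T_\Theta$ is the gradient of a convex function, its graph is cyclically monotone in the sense of Definition~\ref{def: c-cyc} for the quadratic cost; by the sufficient condition for optimality recalled above (Brenier~\cite{brenier1991polar}, using that $f_\Theta$ gives no mass to small sets), $T_\Theta$ is the optimal transport map, proving the first claim.

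For the convexity I would evaluate $W_2^2$ by pulling the defining integral back onto $g$. Starting from $I(\Theta)=\int_{\R^d}|x-T_\Theta(x)|^2 f_\Theta(x)\,dx$ and substituting $y=T_\Theta(x)$ with $f_\Theta\,dx=g\,dy$, the vector $x-T_\Theta(x)=A^{-1}y+s-y$ has $k$-th component $(\lambda_k-1)y_k+s_k$, so that
\[
I(\Theta)=\sum_{k=1}^d\int_{\R^d}\big((\lambda_k-1)y_k+s_k\big)^2\,g(y)\,dy.
\]
Introducing the $\Theta$-independent moments $m_k=\int y_k\,g\,dy$ and $M_k=\int y_k^2\,g\,dy$, which are finite by the standing assumption $\int\!\int|x-y|^2 f(x)g(y)\,dx\,dy<\infty$, each summand equals $(\lambda_k-1)^2M_k+2(\lambda_k-1)s_k m_k+s_k^2$, a quadratic depending only on the pair $(\lambda_k,s_k)$. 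Thus $I$ decouples as a sum of functions of disjoint coordinate blocks, and its convexity is equivalent to the convexity of each summand.

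The crux is then the positive semidefiniteness of the $2\times2$ Hessian of each summand, namely $\left(\begin{smallmatrix}2M_k & 2m_k\\ 2m_k & 2\end{smallmatrix}\right)$. Its trace $2M_k+2$ is positive, and its determinant is $4(M_k-m_k^2)\ge 0$ by the Cauchy--Schwarz inequality (equivalently, because a variance is nonnegative and $g$ is a probability density). Hence each block, and therefore the block-diagonal Hessian of $I$, is positive semidefinite, giving the claimed joint convexity. I expect the only genuinely delicate point to be the rigorous identification of $T_\Theta$ as the \emph{optimal} map: one must invoke Brenier's theorem via cyclical monotonicity rather than merely exhibit a mass-preserving candidate, since it is optimality, not just admissibility, that licenses the explicit formula $I(\Theta)=\int|x-T_\Theta(x)|^2 f_\Theta\,dx$. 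Once optimality is secured, the remaining computation is routine and the convexity reduces transparently to the nonnegativity of a variance.
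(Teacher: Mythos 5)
Your proof is correct and follows essentially the same strategy as the paper's: verify that $T_\Theta$ is mass-preserving, establish its optimality via cyclical monotonicity, pull the transport cost back onto $g$ to obtain an explicit quadratic in $\Theta$, and show its Hessian is positive semidefinite via Cauchy--Schwarz. The only differences are presentational: you get cyclical monotonicity by exhibiting the convex potential $\psi$ with $T_\Theta=\nabla\psi$ rather than by the paper's direct telescoping computation, and you exploit the decoupling of $I(\Theta)$ into independent $(\lambda_k,s_k)$ pairs to reduce to $2\times 2$ Hessian blocks instead of computing the eigenvalues of the full $2d\times 2d$ Hessian, which are in any case exactly the eigenvalues of those blocks.
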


\begin{proof}[Proof of Theorem~\ref{thm: biconvex}]
First, we will justify that $y =T_{\Theta}(x)$ is a measure-preserving map according to Definition~\ref{def:mass_preserve}. It is sufficient to check that $T_{\Theta}$ satisfies Equation~\ref{eq:mass_preserve2}:
\bq
f_{\Theta}(x) =\det(A)g(A(x-\sum_{k=1}^d s_k e_k)) =\det(A)g(T_{\Theta}(x)) =\det(\nabla T_{\Theta}(x)) g(T_{\Theta}(x)).
\eq

Next, we will show that the new joint measure $\pi_{\Theta}=(Id \times T_{\Theta})\# \mu_{\Theta}$ is cyclically monotone. This is based on two lemmas from \cite[p80]{Villani} and the fundamental theorem of optimal transport in~\cite[p10]{ambrosio2013user} on the equivalence of optimality and cyclical monotonicity under the condition
\bq
\int_{\mathbb{R}^d}\int_{\mathbb{R}^d} |x-y|^2 f(x)g(y)dxdy < +\infty.
\eq
For $c(x,y) = |x-y|^2$, the cyclical monotonicity in Definition~\ref{def: c-cyc} is equivalent to
\bq
\sum_{i=1}^{m}x_i \cdot (T(x_i)- T(x_{i-1})) \geq  0,
\eq 
for any given set of $\{x_i\}_{i=1}^m \subset X$. For $T_{\Theta}(x)$, we have
\begin{align}
   \sum_{i=1}^{m}x_i \cdot (T_{\Theta}(x_i)-T_{\Theta}(x_{i-1})) & =  
   \sum_{i=1}^{m}  \sum_{k=1}^d  \langle x_i,e_k \rangle \cdot  (\langle T_{\Theta}(x_{i}) ,e_k \rangle - \langle T_{\Theta}(x_{i-1}) ,e_k \rangle)   \label{eq:c_new}  \\ 
%%%%%%%%%%%%%%%%%%%%%%%%%%%%%
&=   \sum_{i=1}^{m}  \sum_{k=1}^d   \frac{1}{\lambda_k} \langle x_i,e_k \rangle \cdot  (\langle x_i,e_k \rangle - \langle x_{i-1},e_k \rangle ) \\
%%%%%%%%%%%%%%%%%%%%%%%%%%%%%%
%&=  \frac{1}{2} \sum_{i=1}^{m}  \sum_{k=1}^d  \frac{1}{\lambda_k}\left( |\langle x_i,e_k \rangle|^2 - 2 \langle x_i,e_k \rangle  \cdot  \langle x_{i-1},e_k \rangle 
% +|\langle x_{i-1},e_k \rangle|^2 \right) \\
%%%%%%%%%%%%%%%%%%%%%%%%%%%%%
&=  \frac{1}{2} \sum_{k=1}^d  \frac{1}{\lambda_k} \sum_{i=1}^{m}   |\langle x_i,e_k \rangle - \langle x_{i-1},e_k \rangle |^2 \geq 0
 \label{eq:c_old}
    \end{align}

The last inequality~\eqref{eq:c_old} states the support of the transport plan $\pi_{\Theta}=(Id \times T_{\Theta})\# \mu_{\Theta}$ is  cyclically monotone. By the uniqueness of monotone measure-preserving optimal maps between two distributions~\cite{mccann1995existence}, we assert that $T_{\Theta}(x)$ is the optimal map between $f_{\Theta}$ and $g$. The squared $W_2$ distance between $f_{\Theta}$ and $g$ is
 \begin{align}
 I(\Theta)   =  W_2^2(f_{\Theta},g) & = \int_{X} |x-T_{\Theta} (x)|^2 f_{\Theta}(x) dx\\
     & = \int_{Y} \sum_{k=1}^d |(\lambda_k-1) \langle y,e_k\rangle + s_k |^2 d\nu \\
 	& =   \sum_{k=1}^d a_k (\lambda_k-1)^2    + 2 \sum_{k=1}^d b_k s_k (\lambda_k-1)  + \sum_{k=1}^d s_k^2,  \label{eqn:convex}
\end{align}
where $a_k =\int_{Y} |\langle y,e_k\rangle|^2 d\nu $ and $b_k =\int_Y \langle y,e_k\rangle d\nu$.

Equation~$I(\Theta)$ is a quadratic function. We can compute its Hessian matrix
\bq
H(\Theta)  =  \begin{pmatrix}
I_{s_1s_1} &\dots & I_{s_1s_d} & I_{s_1\lambda_1}&\dots & I_{s_1\lambda_d}  \\
\vdots   &\ddots & \vdots	   &\vdots					&\ddots  &  \vdots  \\
I_{s_d s_1} &\dots & I_{s_ds_d} & I_{s_d\lambda_1}&\dots & I_{s_d\lambda_d}  \\
I_{\lambda_1 s_1} &\dots & I_{\lambda_1 s_d} & I_{\lambda_1 \lambda_1}&\dots & I_{\lambda_1 \lambda_d}  \\
\vdots   &\ddots & \vdots	   &\vdots					&\ddots  &  \vdots  \\       
I_{\lambda_d s_1} &\dots & I_{\lambda_d s_d} & I_{\lambda_d \lambda_1}&\dots & I_{\lambda_d \lambda_d}  \\
\end{pmatrix}  = 
         \begin{pmatrix}
1            &\dots    & 0              & b_1                       &\dots    & 0   \\
\vdots   &\ddots & \vdots	   &\vdots					&\ddots  &  \vdots  \\
0            &\dots   &  1                &0                          &\dots    & b_d \\
b_1          &\dots  & 0             & a_1                         &\dots    & 0   \\
\vdots   &\ddots & \vdots	   &\vdots					&\ddots  &  \vdots  \\       
0             &\dots  &b_d           & 0                           &\dots & a_d  \\
        \end{pmatrix}. 
\eq 
$H(\Theta)$ is a symmetric matrix with eigenvalues:
\bq
\frac{1}{2} \left( a_k +1\pm \sqrt{a_k^2 - 2a_k + 4b_k^2 + 1} \right),\ k=1,\dots, d.
\eq
Since $a_k =\int_{Y} |\langle y,e_k\rangle|^2 d\nu \geq 0$ by definition, and 
 \begin{align}
&\quad  \frac{1}{4}  \left(a_k +1\right)^2 - \left(\frac{1}{2} \sqrt{a_k^2 - 2a_k + 4b_k^2 + 1} \right)^2 \\ & =  \frac{1}{4} \left((a_k+1)^2 - \left(a_k^2 - 2a_k + 4b_k^2 +1 \right) \right) = a_k - b_k^2  \\
&=  \int_{Y} |\langle y,e_k\rangle|^2 d\nu \int_{Y} 1^2 d\nu - \left(\int_Y \langle y,e_k\rangle d\nu \right)^2 \\
& \geq 0 \quad \text{by Cauchy-Schwarz inequality},
\end{align}
all the eigenvalues of $H(\Theta)$ are nonnegative. Therefore, the Hessian matrix of $I(\Theta)$ is symmetric positive semidefinite, which completes our proof that $W_2^2(f_{\Theta},g)$ is a convex function with respect to $\Theta = \{s_1, \dots, s_d, \lambda_1, \dots, \lambda_d\}$, the combination of translation and dilation variables.
\end{proof}

\begin{figure}
\centering
   \subfloat[$W^2_2(f_{[\mu,\sigma]},g)$]{\includegraphics[height=0.25\textwidth]{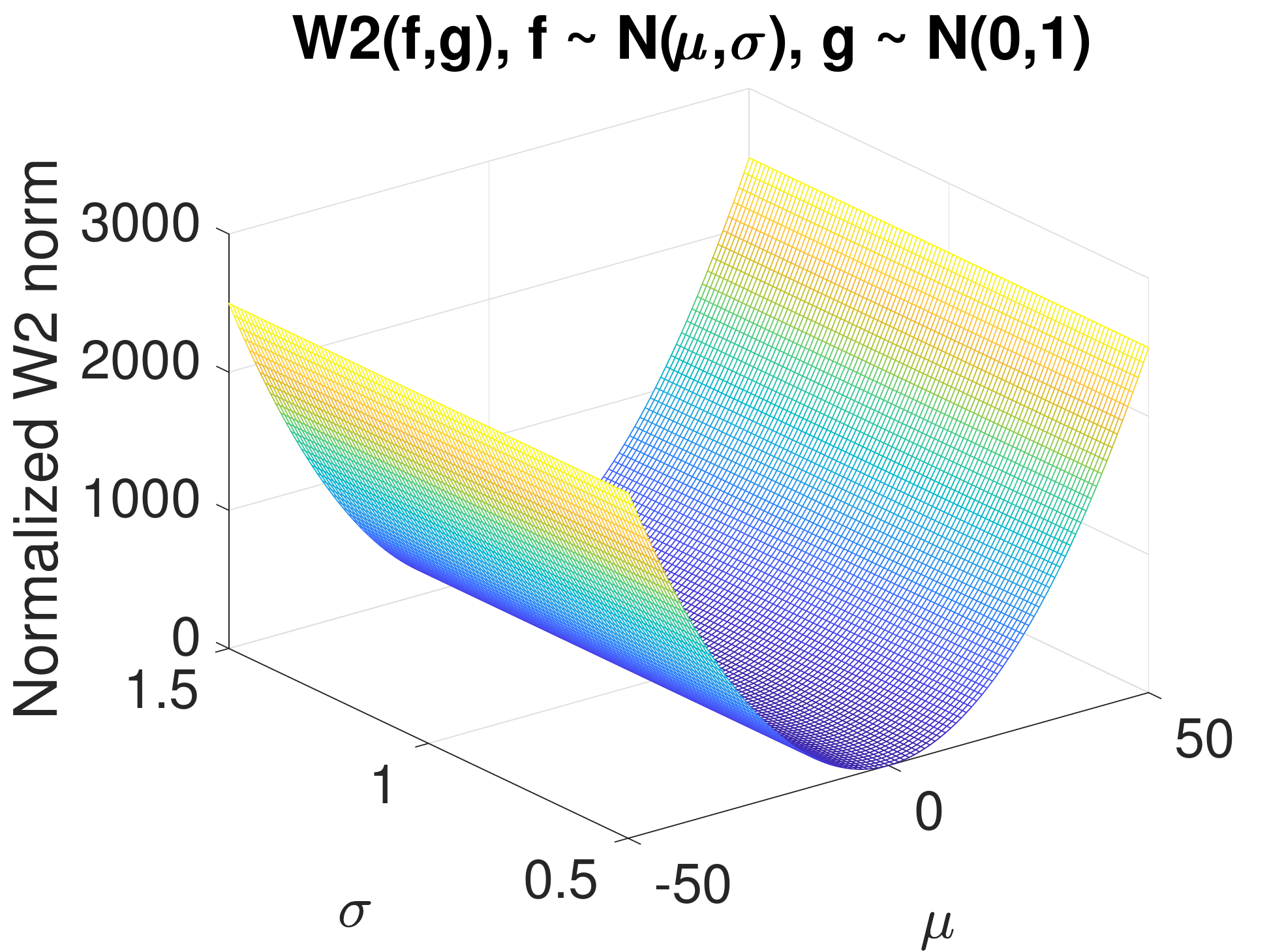}\label{fig:Gaussian_W2_2var}}
      \subfloat[$W^2_2(f_{[\mu,1]},g)$]{\includegraphics[height=0.25\textwidth]{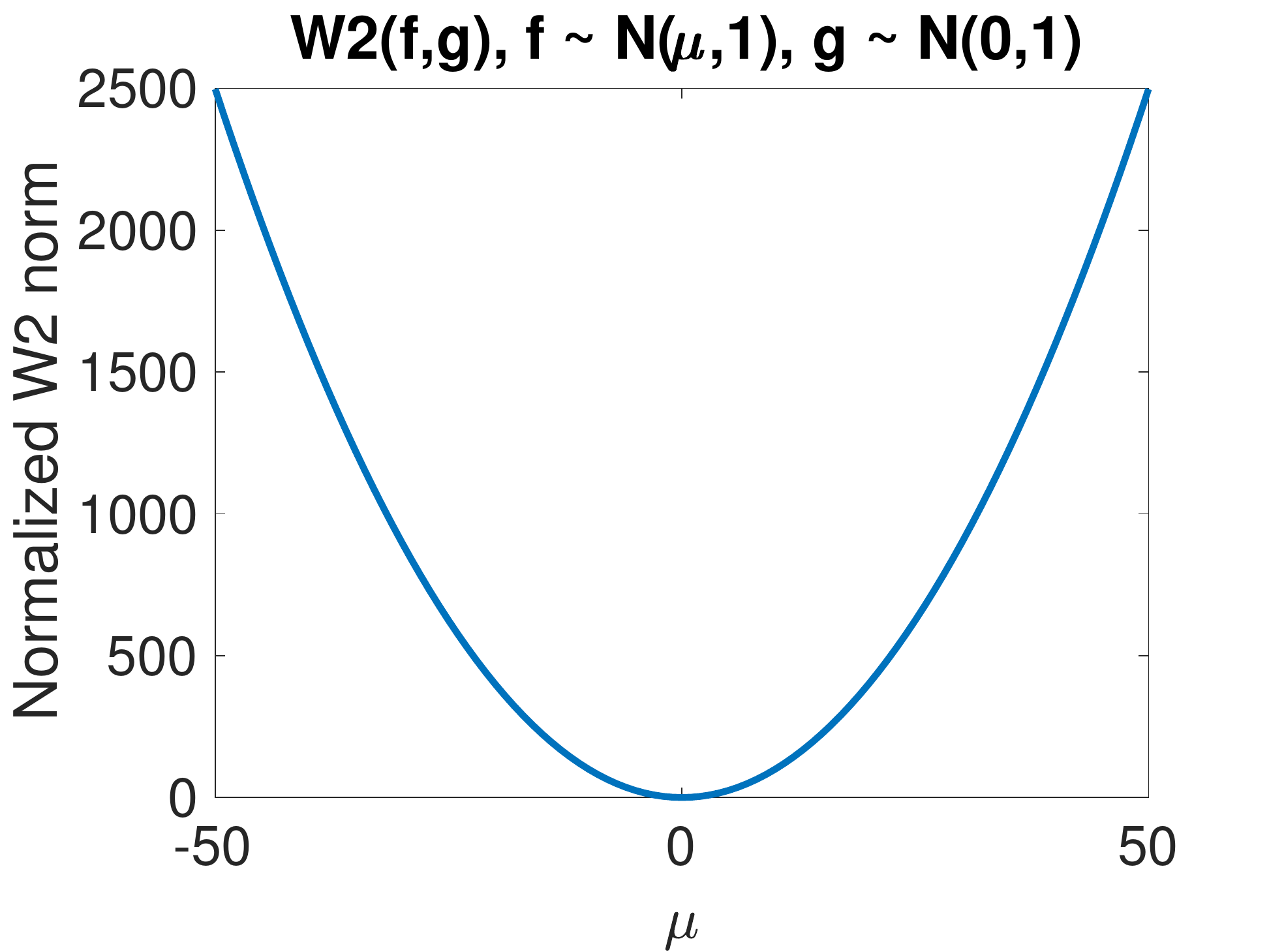}\label{fig:Gaussian_W2_mu}}
   \subfloat[$W^2_2(f_{[0,\sigma]},g)$]{\includegraphics[height=0.25\textwidth]{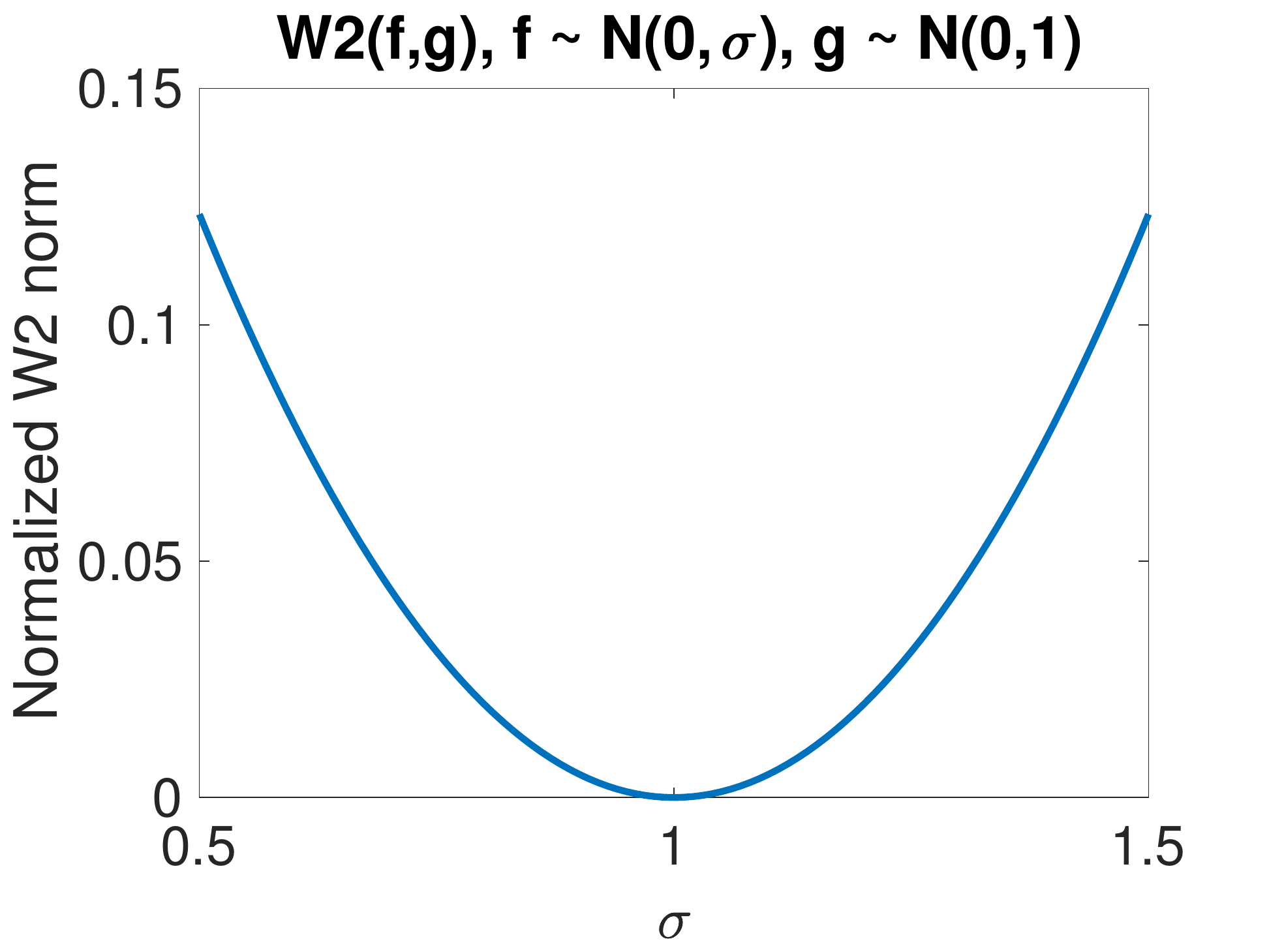}\label{fig:Gaussian_W2_sigma}}\\
   \caption{$W^2_2(f_{[\mu,\sigma]},g)$ where $f_{[\mu,\sigma]}$ and $g$ are density functions of $\mathcal{N}(\mu,\sigma)$ and $\mathcal{N}(0,1)$.}~\label{fig:Gaussian_W2}
\end{figure}

In Figure~\ref{fig:Gaussian_W2}, we illustrate the joint convexity of the $W_2$ distance with respect to both translation and dilation by comparing density functions of normal distributions. We set $f_{[\mu,\sigma]}$ as the density function of the 1D normal distribution $\mathcal{N}(\mu,\sigma)$. We treat $g$ as the reference signal which is the density function of $\mathcal{N}(0,1)$. Figure~\ref{fig:Gaussian_W2} is the optimization landscape of $W^2_2(f_{[\mu,\sigma]},g)$ as a multivariable function. The plots show that $W^2_2(f_{[\mu,\sigma]},g)$ is globally convex with respect to both translation and dilation.

\begin{remark}
Although there seems to be no uniqueness in Figure~\ref{fig:Gaussian_W2_2var}, the cross-sections in Figure~\ref{fig:Gaussian_W2_mu} and Figure~\ref{fig:Gaussian_W2_sigma} illustrate that $W_2$ is still convex around the reference values with a unique minimum. There is, of course, a scale difference in the two variables since $W_2$ is more sensitive to signal translation than dilation as a result of Definition~\ref{def:OT}. However, the amplitude difference in the form of dilation is always considered even with no phase mismatches as seen in Figure~\ref{fig:Gaussian_W2_sigma} when the translation parameter is fixed.
\end{remark}

%}

%The convexity results in Theorem~\ref{thm: biconvex} can be extended to a more general setup: $f_{s,A}(x)=\det(A)f(A(x-s\eta))$, where $A$ is a symmetric positive definite matrix. We have the following theorems whose proofs are in the same spirit with Theorem~\ref{thm: biconvex}.
%
%\begin{theorem}[Optimal map for translation and a general dilation]\label{thm: dilation}
%Suppose the hypotheses of Theorem~\ref{thm: biconvex} hold, the optimal transport map that rearranges $f_{s,A}(x)$ into $g(y)$ is $T_{s,A}(x)=T(A(x-s\eta))$.
%\end{theorem} 
%\begin{theorem}[Convexity of $W_2$ for translation and a general dilation]
%\label{thm: more_convex}
%Define the dilation matrix in Theorem~\ref{thm: dilation} as $A=\diag (\frac{1}{\lambda_1},\dots,\frac{1}{\lambda_d})$, the Wasserstein metric $W_2^2(f_{s,A},g)$ is a convex function of variables $s, \lambda_1,\dots,\lambda_d$.
%\end{theorem}

\subsection{Numerical Example}
Among different types of recorded seismic data, diving waves (wavefronts continuous refracted upwards through the earth due to the presence of a vertical velocity gradient) have been the driving force behind the success of full-waveform inversion (FWI) in achieving high-resolution model recovery in shallow-water environments. Inaccurate kinematic features in the initial model often lead to shifts in seismic signals, especially common in diving waves. Figure~\ref{fig:2_ricker} illustrates one representative example. As discussed in Section~\ref{sec:fwi_obj}, the oscillatory and periodic nature of waveforms leads to a primary challenge in inversion known as \textit{cycle-skipping issues} in exploration geophysics. Mathematically, it is a local minima problem in this PDE-constrained optimization.

Next, we redo the \textit{full} Marmousi model based on the 1D technique, computing the Wasserstein distance together with the sign-sensitive normalization in~\eqref{eq:mixmix}. For each receiver, we first normalize the seismic signals and then solve the optimal transport problem in 1D. This is the full Marmousi model while the example presented in Section~\ref{sec:MA_Marm} is a scaled version due to limitations of the \MA solver.  

Figure~\ref{fig:marm2_L2_grad} and Figure~\ref{fig:marm2_W2_grad} are the gradients in the first iteration of the inversion using two objective functions respectively. Starting from a highly smooth initial model shown in Figure~\ref{fig:marm2_v0}, $W_2$ inversion already focuses on the ``peak'' of the Marmousi model in the very first iteration. The darker area in Figure~\ref{fig:marm2_W2_grad} matches many features of the true Marmousi model in~Figure~\ref{fig:marm2_true}. After 300 L-BFGS iterations, final $L^2$ and $W_2$ inversion results are shown in Figure~\ref{fig:marm2_L2} and~\ref{fig:marm2_w2_1D}. The data residuals before and after inversion (Figure~\ref{fig:MARM2_res0} and~\ref{fig:MARM2_W2res}) illustrate that the $W_2$ based method fits the data quite well. The convergence histories in Figure~\ref{fig:marm2_conv} demonstrate that $W_2$ not only rapidly reduces the relative misfit to 10\% in just 20 iterations but also avoids the local minimum that $L^2$ based inversion easily converges to.

Compared with the relatively noisy inversion result based on the \MA equation (Figure~\ref{fig:marm_w2_2D}), the trace-by-trace 1D technique, on the other hand, can make use of exact formulas for 1D optimal transportation, which allows for accurate computations even when the data is highly non-smooth.

\begin{figure}
\centering
  \subfloat[$L^2$ first gradient]{\includegraphics[width=0.23\textwidth]{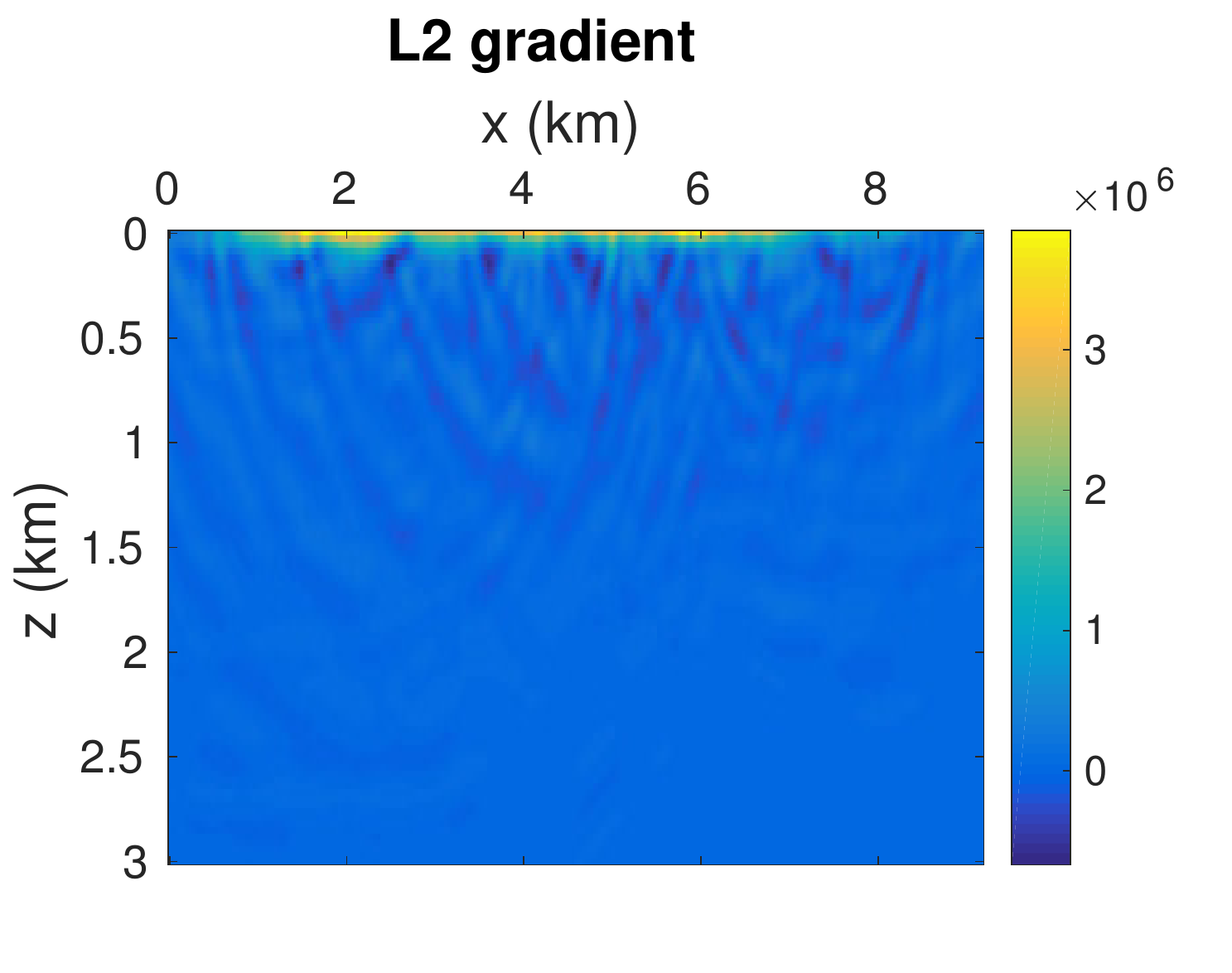}\label{fig:marm2_L2_grad}}
  \subfloat[$W_2$ first gradient]{\includegraphics[width=0.23\textwidth]{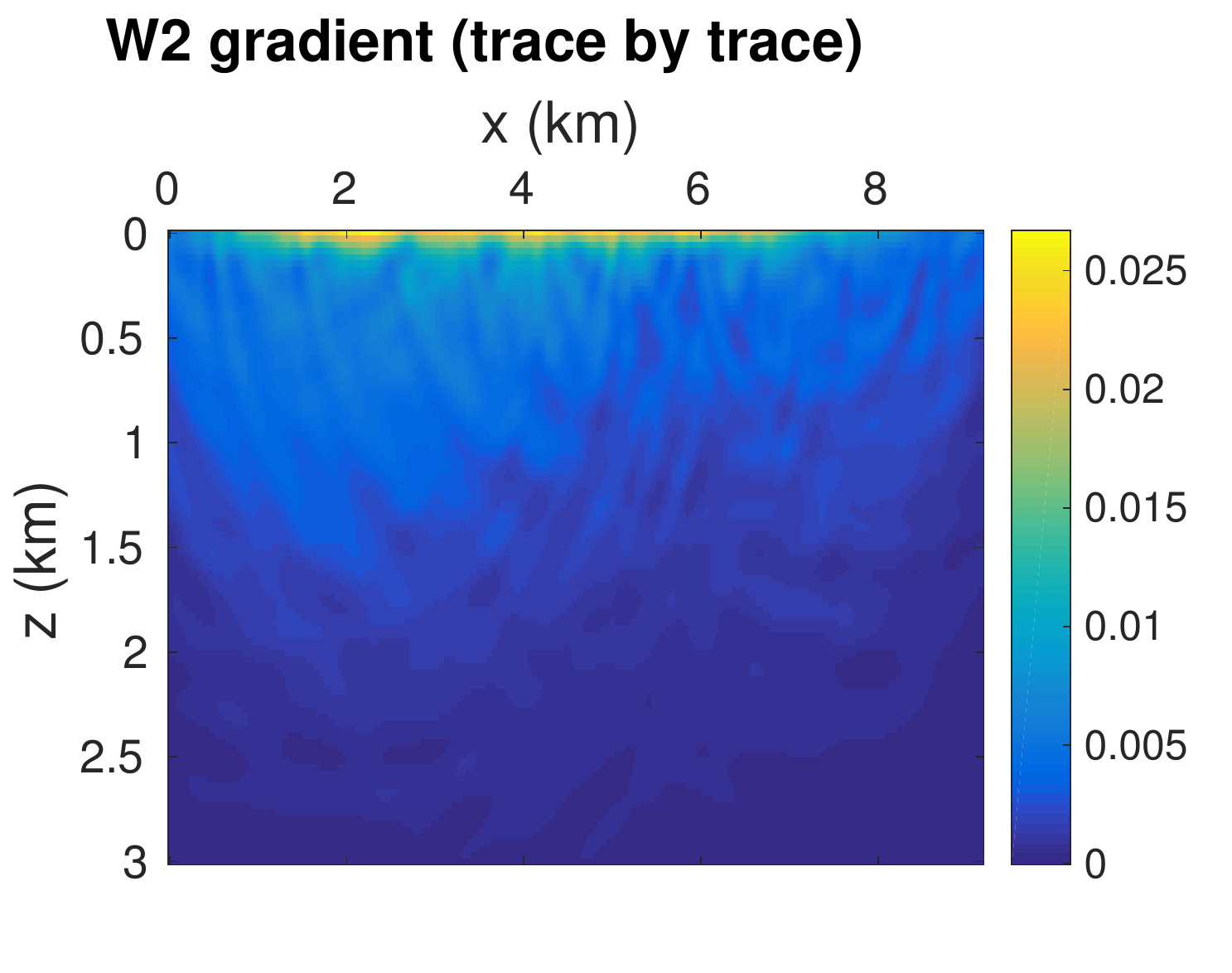}\label{fig:marm2_W2_grad}}
    \subfloat[$W_2$ initial residual]{\includegraphics[width=0.23\textwidth]{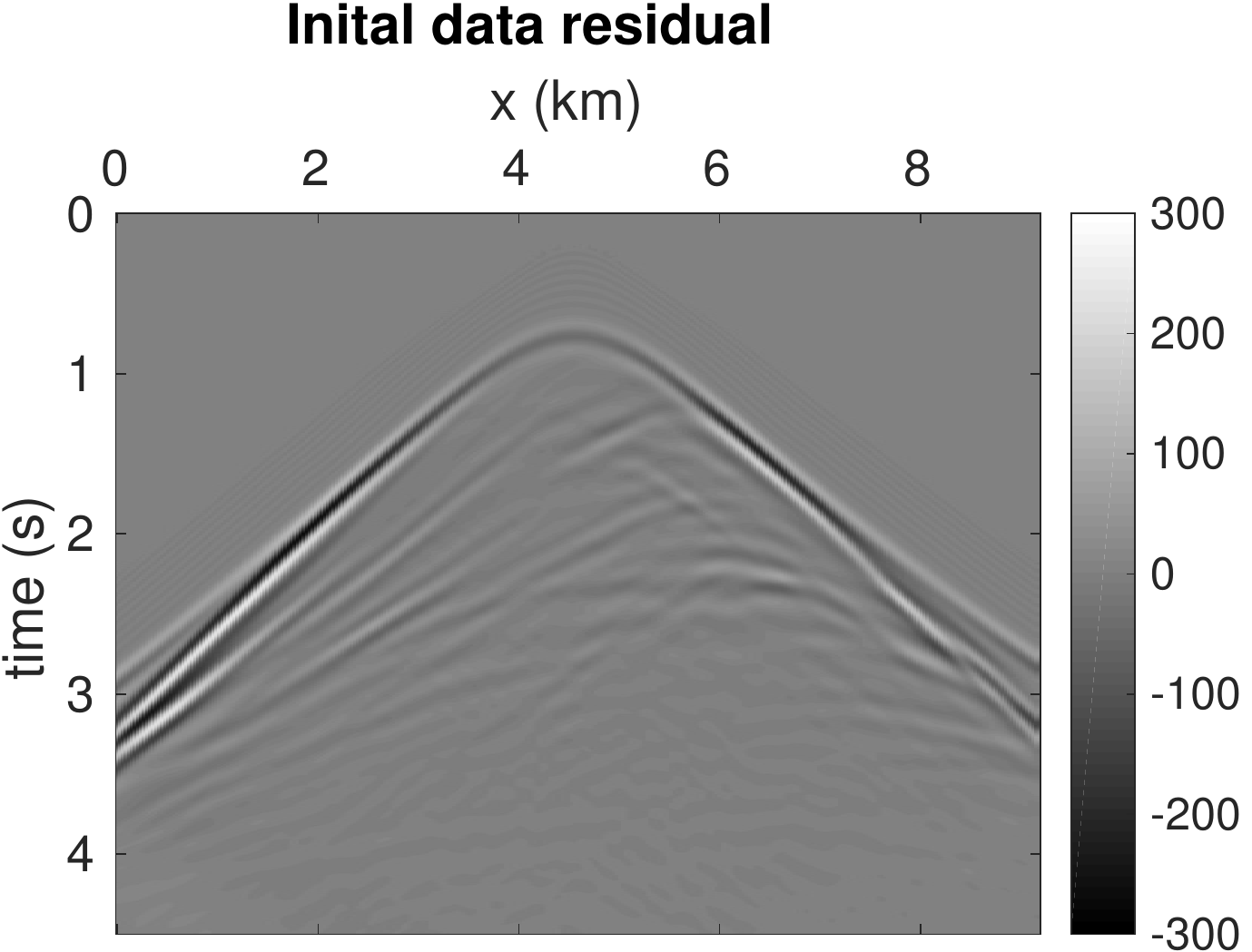}\label{fig:MARM2_res0}}
  \subfloat[$W_2$ final residual]{\includegraphics[width=0.23\textwidth]{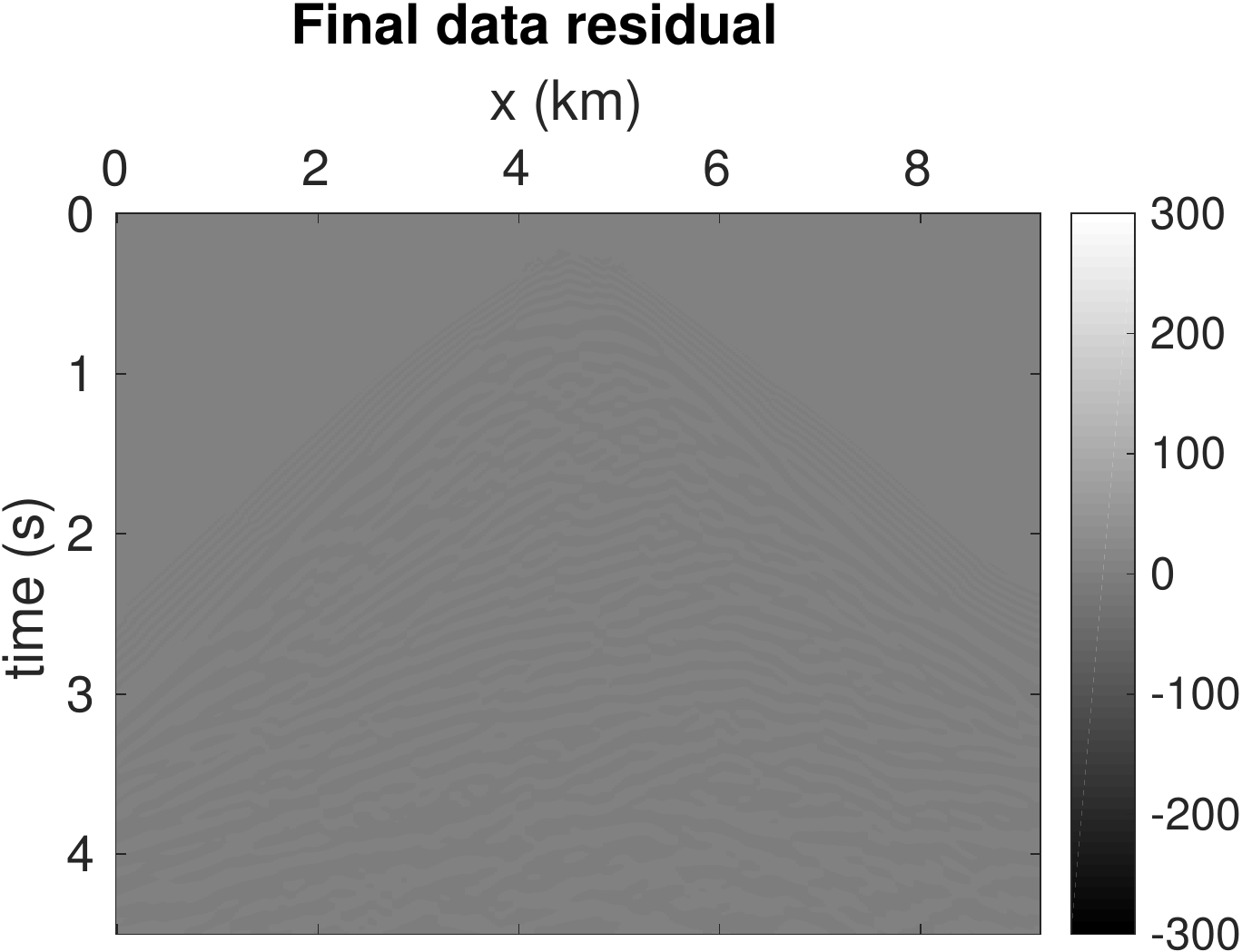}\label{fig:MARM2_W2res}}
  \caption{Full Marmousi Model: the gradient in the first iteration of (a)~$L^2$ and (b)~trace-by-trace $W_2$; The difference between the observable true data  and the synthetic data using (c)~the initial model shown in Figure~\ref{fig:marm2_v0}, and (d)~the final $W_2$ inversion result shown in Figure~\ref{fig:marm2_w2_1D}.}
  \label{fig:marm2_grad}
\end{figure}

\begin{figure}
\centering
  \subfloat[$L^2$ final result]{\includegraphics[width=0.24\textwidth]{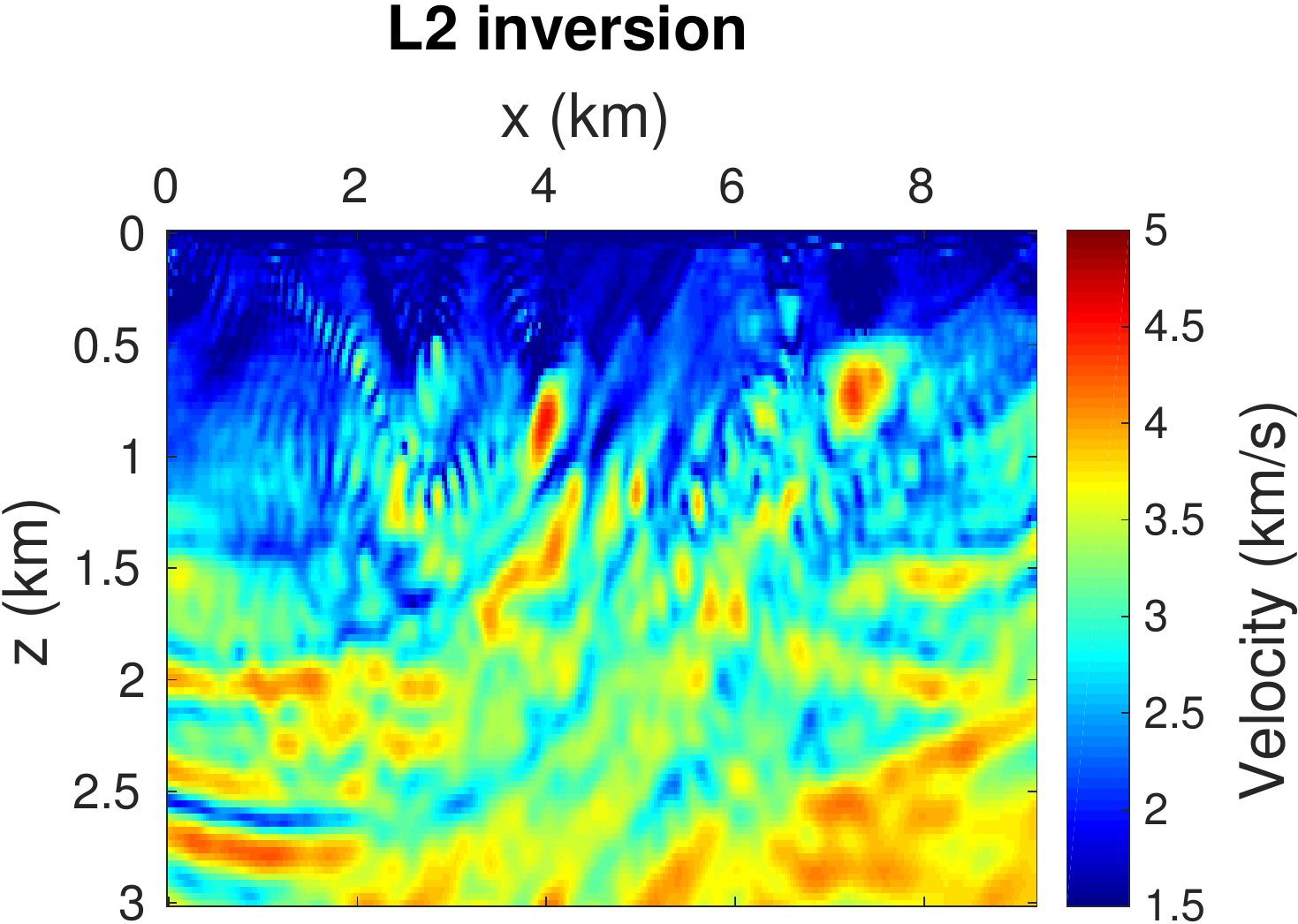}\label{fig:marm2_L2}}
  \subfloat[$W_2$ final result]{\includegraphics[width=0.24\textwidth]{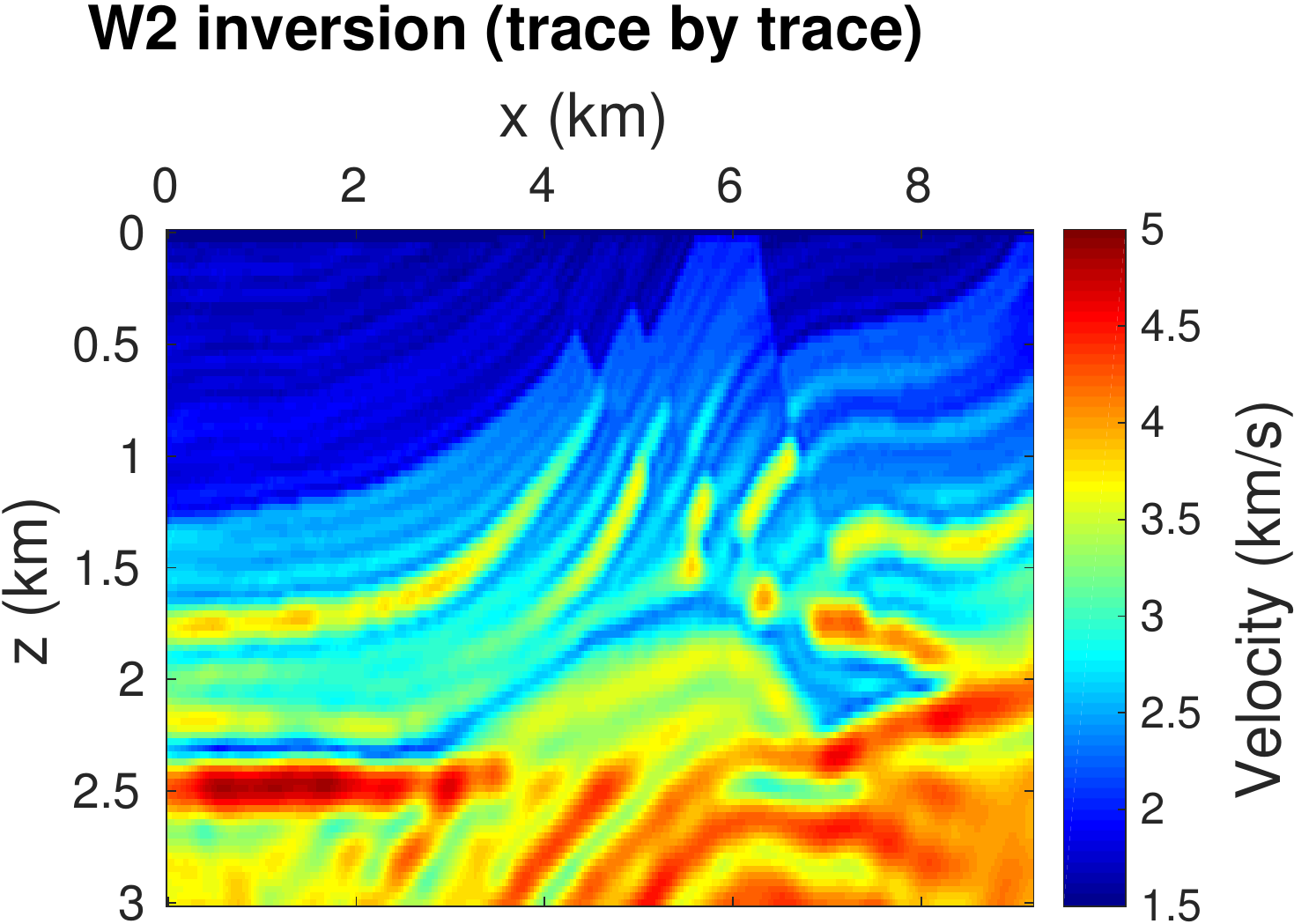}\label{fig:marm2_w2_1D}}
   \subfloat[Convergence histories along iterations]{\includegraphics[width=0.5\textwidth]{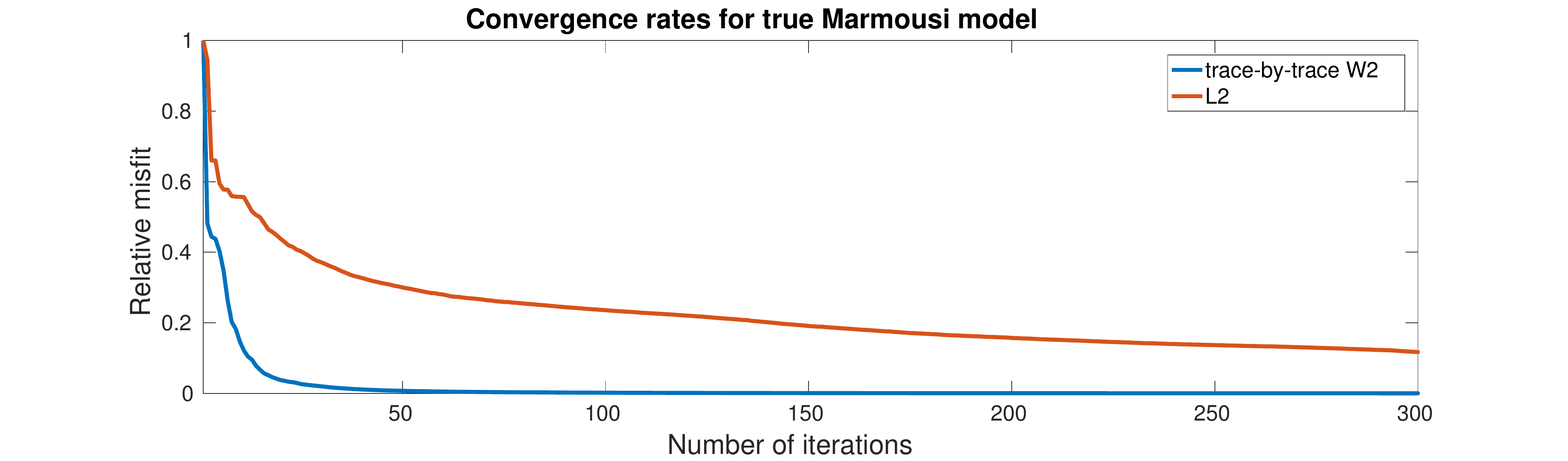}\label{fig:marm2_conv} }
  \caption{Full Marmousi model: inversion results after 300 iterations of (a)~the $L^2$-FWI; (b)~the trace-by-trace $W_2$-FWI and (c)~the convergence histories of $L^2$-FWI (red) and trace-by-trace $W_2$-FWI (blue).}
  \label{fig:marm2_inv}
\end{figure}

%%%%%%%%%%%%%%%%%%%%%%%%%%%%%%%%%%%%%%%%%
\section{Challenge Two: Inversion with Noisy Data}
\label{sec:challenge2}
Besides local minima issues, the problem of the $L^2$ norm is exacerbated by the fact that observed signals usually suffer from noise in the measurements. In the practical application of full-waveform inversion (FWI), it is natural to experience noise in the true observable data, and therefore robustness with respect to noise is a desirable property of the objective function. 
%%%%%%%%%%%%%%%%%%%%%%%%%%%%%%%%%%%%%%%%%
\subsection{Insensitivity to Noise}
Another essential property of optimal transport is the insensitivity to noise. The $W_2$ distance is proved to be insensitive to mean-zero noise, and the property applies to any dimension of the data in~\cite{engquist2016optimal}. This is a natural result of optimal transport theory since the $W_2$ metric defines a global comparison that considers not only the difference in signal intensity but also the phase mismatches.

\begin{theorem}[Insensitivity to noise~{\cite{engquist2016optimal}}]\label{thm:noise}
Let $f_{ns}$ be $f$ with a piecewise constant additive noise of mean zero uniform distribution.
The squared Wasserstein metric $W_2^2(f,f_{ns})$ is of $\bO(\frac{1}{N})$ where $N$ is the number of pieces of the additive noise in $f_{ns}$.
\end{theorem}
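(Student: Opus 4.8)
The plan is to reduce the problem to the one-dimensional optimal transport formula and then to control the antiderivative of the noise, the place where the mean-zero structure does all the work. Throughout I assume, consistent with the normalization of Section~\ref{sec:OT_data_normalization}, that $f$ and $f_{ns}$ are probability densities on a bounded interval, with $f$ bounded above by $M$ and below by $m>0$; since the noise has fixed amplitude and mean zero, $f_{ns}$ is then also bounded below by a positive constant (still denoted $m$) once $N$ is large. I also read ``mean zero'' as $\int n = 0$ over the whole domain, so that $f_{ns}$ integrates to one, with $n$ constant on each of the $N$ subintervals of width $h = \bO(1/N)$.

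\emph{Step 1 (quantile formula and linearization).} First I would write $W_2^2(f,f_{ns})$ via the explicit 1D formula~\eqref{eq:myOT1D}, whose optimal map is $T = F_{ns}^{-1}\circ F$ by Theorem~\ref{thm:OT1D}. Since $F(x) = F_{ns}(T(x))$, we have $F(x)-F_{ns}(x) = \int_x^{T(x)} f_{ns}(t)\,dt$, and $f_{ns}\ge m$ gives the pointwise bound $|T(x)-x| \le \frac{1}{m}\,|F(x)-F_{ns}(x)|$. Substituting into $W_2^2 = \int (x-T(x))^2 f(x)\,dx$ and bounding $f\le M$ at the end yields
\[
  W_2^2(f,f_{ns}) \le \frac{M}{m^2}\int |F(x)-F_{ns}(x)|^2\,dx,
\]
which replaces the transport cost by an $L^2$ distance between cumulative distributions and makes the dependence on the noise explicit.

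\emph{Step 2 (the antiderivative of the noise — the crux).} Let $c_i$ be the constant value of $n$ on cell $i$ and set $\mathcal{N}(x)=\int^x n$, so that $F-F_{ns}=-\mathcal{N}$. The function $\mathcal{N}$ is continuous and piecewise linear, vanishing at both endpoints (because $\int n=0$), and its interface values are the scaled partial sums $h\sum_{i\le k} c_i$. Because the $c_i$ are independent and mean zero, these interface values form a mean-zero random walk with $\E\big[(h\sum_{i\le k} c_i)^2\big] = \bO(h^2 k)$. Integrating $\mathcal{N}^2$ over cell $k$ (of width $h$) therefore contributes $\bO(h^3 k)$ in expectation, and summing over $k=1,\dots,N$ gives $\E\!\int \mathcal{N}^2\,dx = \bO(h^3 N^2) = \bO(1/N)$. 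Combined with Step 1 this yields $W_2^2(f,f_{ns}) = \bO(1/N)$.

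\emph{Main obstacle and remarks.} The genuinely nontrivial point is Step 2: the naive bound $|\mathcal{N}|\le h\sum|c_i| = \bO(1)$ is useless, and one must exploit the mean-zero, independent structure so that the cumulative sums grow like $\sqrt{k}$ rather than $k$; this is exactly what converts fixed-amplitude noise into an $\bO(1/N)$ transport cost and is the quantitative meaning of ``insensitivity.'' A secondary technical point is guaranteeing $f_{ns}\ge m>0$ so that the comparison in Step 1 is legitimate, which is where the assumption that the signal dominates the noise enters. Finally, I would note that Step 1 is the only genuinely one-dimensional ingredient: replacing it by the dimension-free weighted Peyre/$\dot H^{-1}$ bound $W_2^2(f,f_{ns}) \lesssim \frac{1}{m}\,\|f-f_{ns}\|_{\dot H^{-1}}^2$, and reading $\|n\|_{\dot H^{-1}}^2$ as the energy of a Poisson problem with piecewise-constant data on $N$ cells, reproduces the same $\bO(1/N)$ rate in arbitrary dimension, matching the claim that the result holds for data of any dimension.
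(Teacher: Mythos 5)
The paper does not actually prove Theorem~\ref{thm:noise}; it is quoted from~\cite{engquist2016optimal}, and the argument there is essentially the one you give: reduce to the 1D quantile formula, bound $|T(x)-x|\le \frac{1}{m}|F(x)-F_{ns}(x)|$ via a positive lower bound on the density, and control $\int |F-F_{ns}|^2$ through the variance $\bO(h^2k)$ of the scaled partial sums of the iid mean-zero cell values, yielding $\bO(h^3N^2)=\bO(\frac{1}{N})$ in expectation. Your proof is correct; the one point to tidy is the mild tension between taking the $c_i$ independent and imposing $\int n=0$ --- either condition on the bridge, for which $\E[S_k^2]\le\sigma^2 k$ still holds, or renormalize $f_{ns}$ and absorb the $\bO(N^{-1/2})$ mass defect, which also contributes only $\bO(\frac{1}{N})$ to the squared distance.
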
 

On the other hand, the $L^2$ norm is known to be sensitive to noise since the misfit between clean and noisy data is calculated as the sum of squared noise amplitude at each sampling point. Noise has $\bO(1)$ effect on $L^2$ based objective function, while $\bO(\frac{1}{N})$ effect on the squared $W_2$ distance.

\subsection{Numerical Example}

\begin{figure}
\centering
  \subfloat[True velocity]{\includegraphics[height = 3cm]{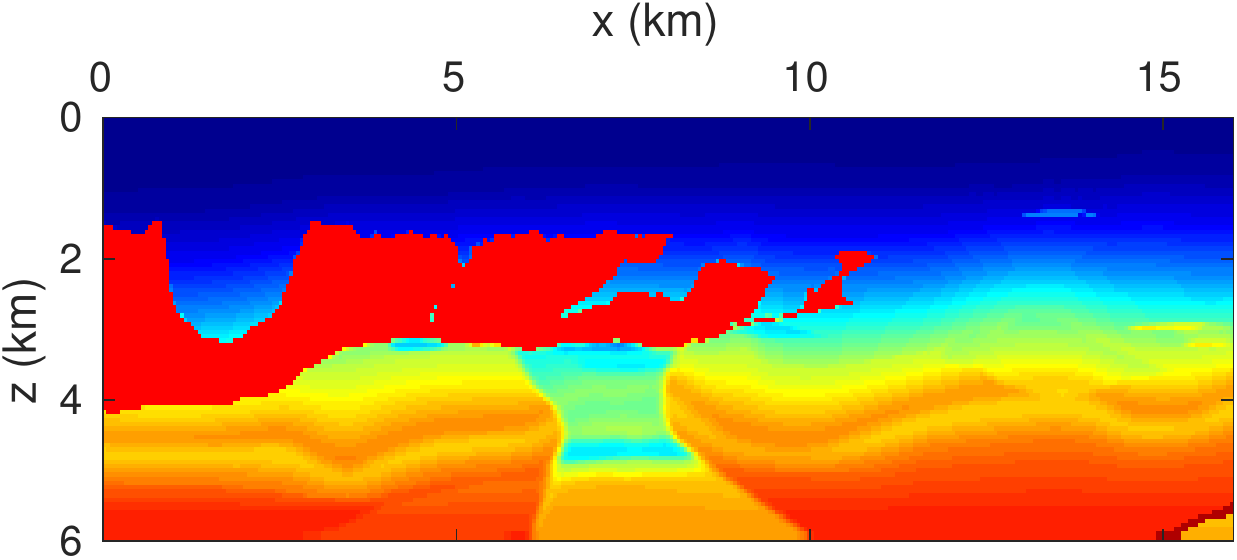}\label{fig:BP2_true}}
  \subfloat[Initial velocity]{\includegraphics[height = 3cm]{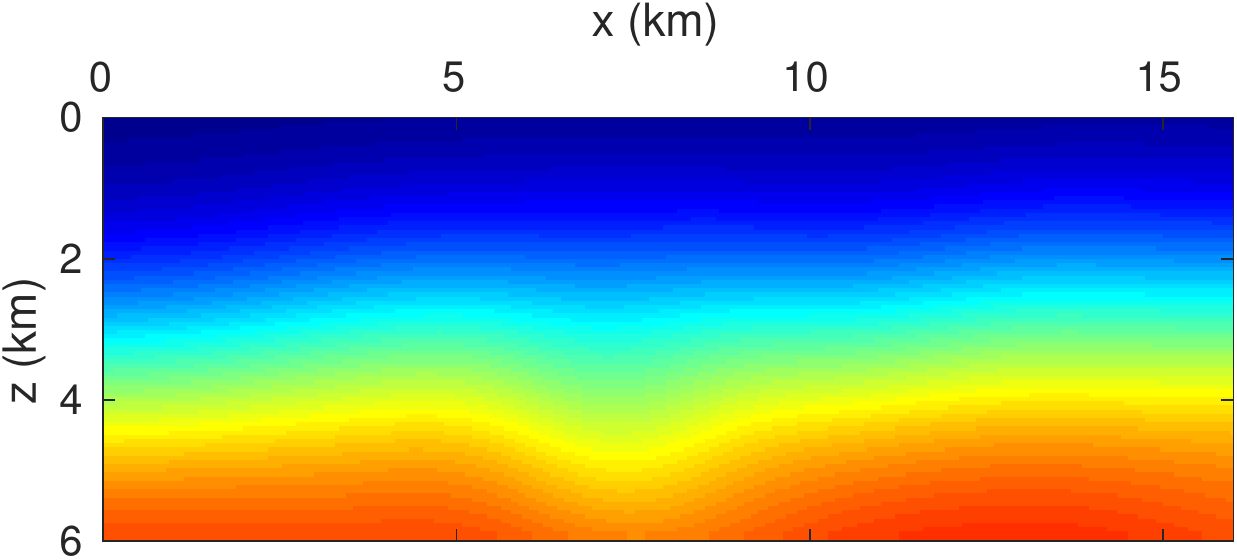}\label{fig:BP2_v0}}\\
 \subfloat[$W_2$ inversion final result]{\includegraphics[height = 3cm]{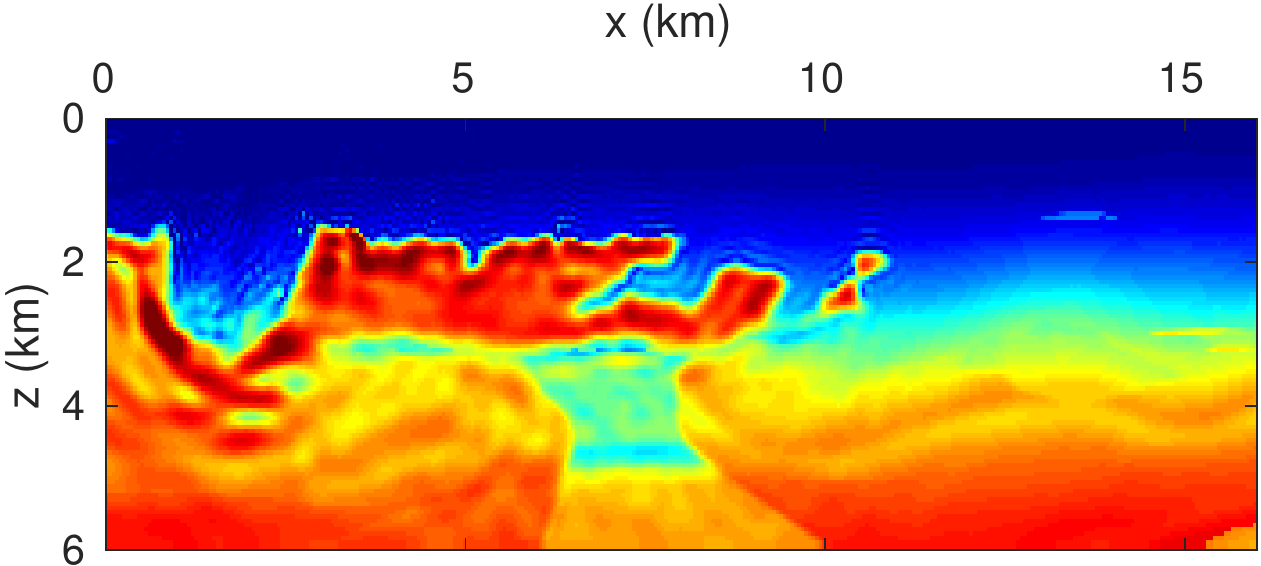}\label{fig:BP_w2_1D}} 
\subfloat[$L^2$ inversion final result]{\includegraphics[height = 3cm]{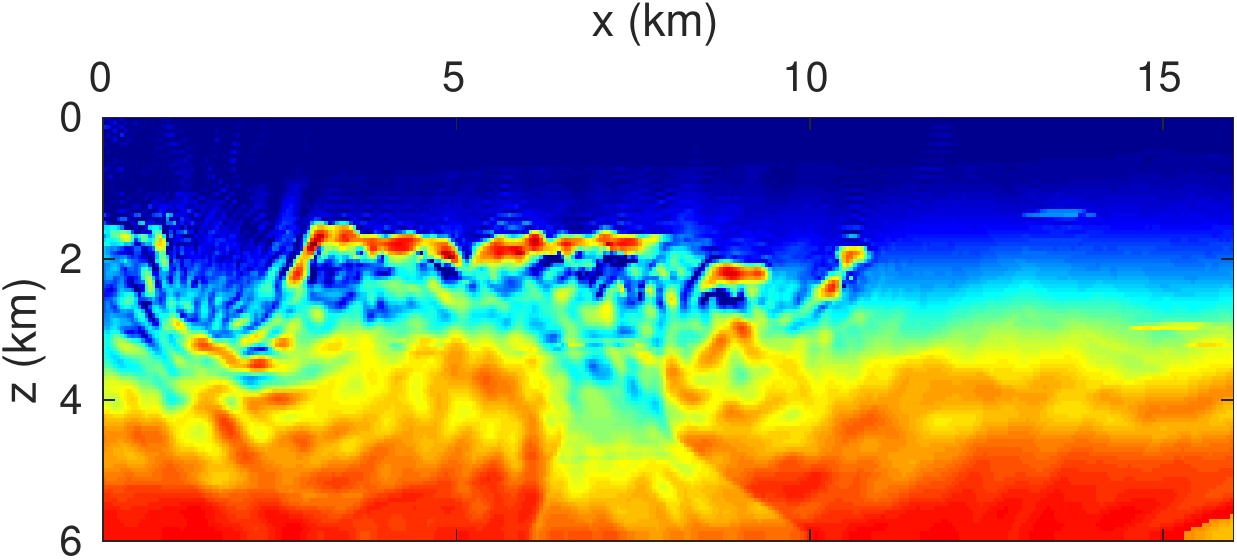}\label{fig:BP_L2}}\\
      \subfloat[Noisy and clean data]{\includegraphics[height = 3cm]{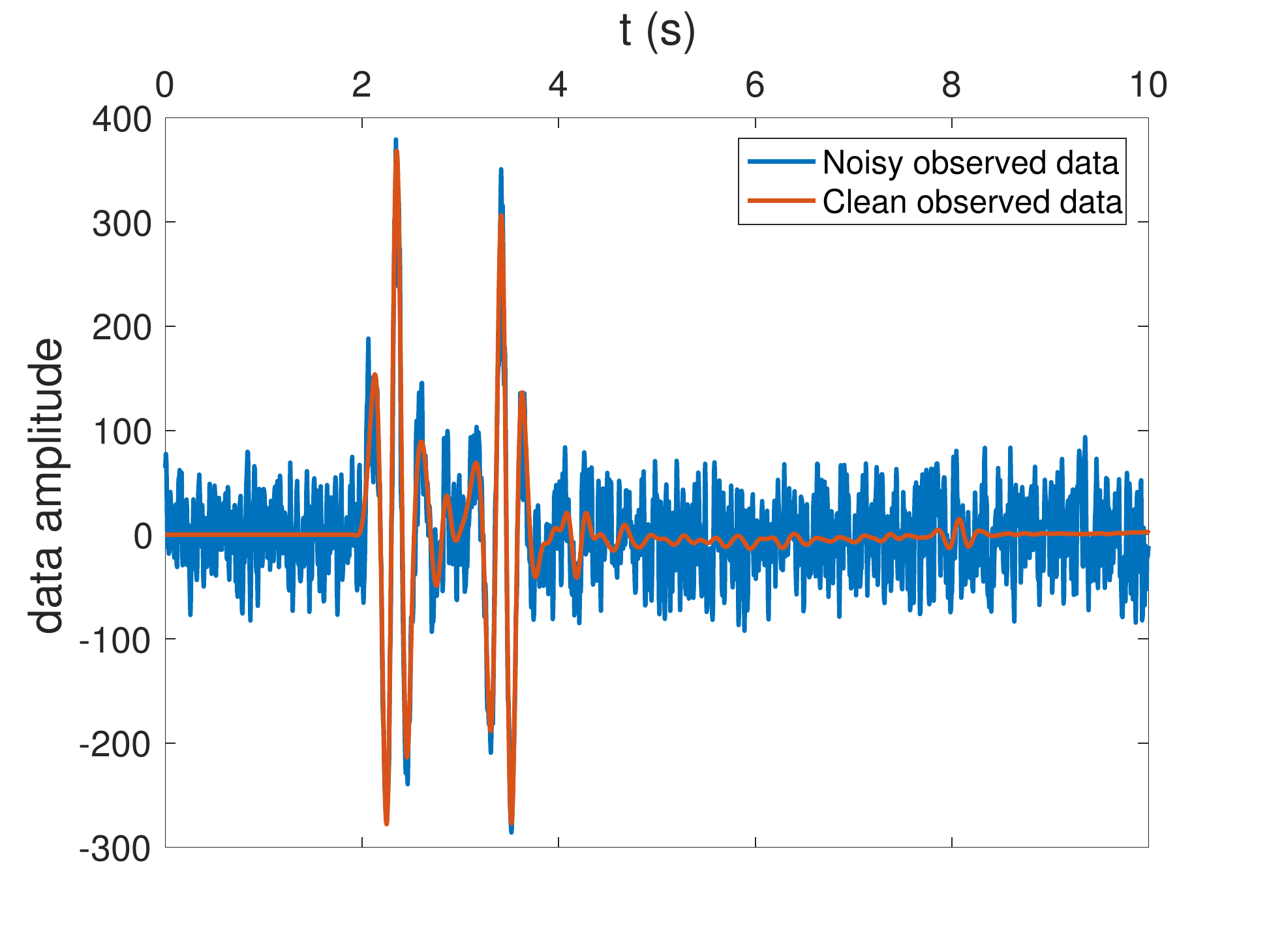}\label{fig:BP_noise_trace}} \hspace{0.3cm}
      \subfloat[$W_2$-FWI using the noisy data in Figure~\ref{fig:BP_noise_trace}]{\includegraphics[height = 3cm]{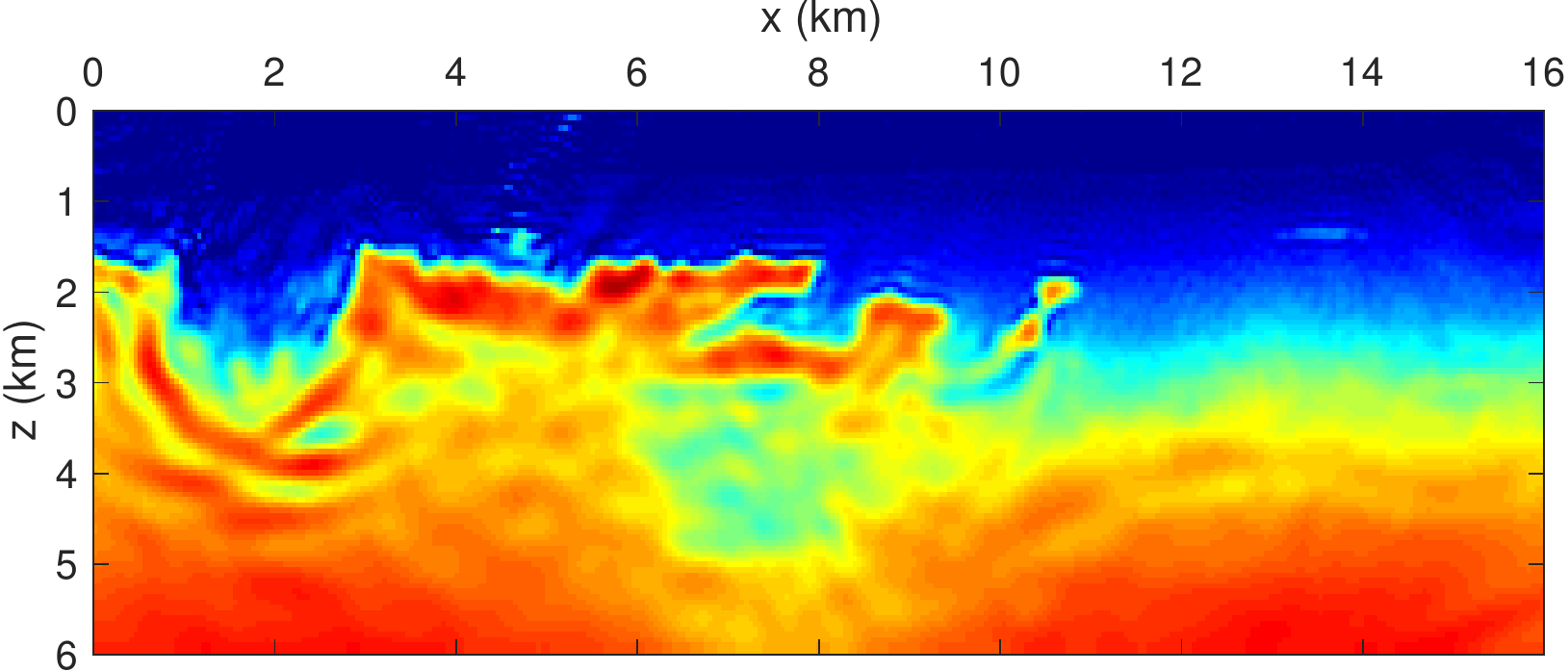}\label{fig:BP_noise_vel}}\\
  \caption{2004 BP model: (a)~true velocity; (b)~initial velocity; Inversion result using (c) $L^2$ norm and (d)~trace-by-trace $W_2$ distance; (e)~noisy (blue) and noise-free data (red) at one trace; (f)~$W_2 $ inversion result from noisy data.}
  \label{fig:BP2_true,BP2_v0}
\end{figure}

Next, we will demonstrate insensitivity to noise, an ideal property of the \QW, by inverting the modified 2004 BP Benchmark, which is 6 km in depth and 16 km in width (Figure~\ref{fig:BP2_true}). Part of the model is representative of the complex geology in the deepwater Gulf of Mexico. The main challenges in this area are related to obtaining a precise delineation of the salt and recover information on the sub-salt velocity variations~\cite{billette20052004}. The inversion starts with a smoothed background without the salt (Figure~\ref{fig:BP2_v0}). We put 11 equally spaced sources on top at 250 m depth in the water layer and 321 receivers on top at the same depth with a 50 m fixed acquisition. The discretization of the forward wave equation is 50 m in the $x$ and $z$ directions and 5 ms in time. The source is a Ricker wavelet with a peak frequency of 5 Hz, with a band-pass filter applied to keep 3 to 9 Hz frequency. The total acquisition time is 10 seconds. With clean target data, inversion with the trace-by-trace $W_2$ misfit successfully reconstructs the shape of the salt bodies (Figure~\ref{fig:BP_w2_1D}), while FWI using the conventional $L^2$ norm fails to recover neither the main body nor the entire boundaries of the salt as shown by Figure~\ref{fig:BP_L2}.

Next, we set up the experiment in a more challenging and realistic setting by adding correlated noise to the observed data as the new reference (Figure~\ref{fig:BP_noise_trace}). At each time grid, the noise is 
\bq
\tilde{r}(j) = \frac{r(j-1)+2r(j)+r(j+1)}{4} \left(1+\frac{g(j)}{||g||_\infty}\right),
\eq 
where $r$ is mean-zero uniform iid noise and $g$ is the clean observed data. The SNR ratio is 5.98 dB. The observed data to be fitted in this experiment is $\tilde{g} = g + \tilde{r}$ which is the original reference data plus noise. 

After 185 iterations, the optimization converges to a velocity presented in Figure~\ref{fig:BP_noise_vel}. Compared with Figure~\ref{fig:BP_w2_1D}, the result has lower accuracy around the salt bottom due to the strong noise added to the target data (Figure~\ref{fig:BP_noise_trace}). However, we still can recover the salt body and upper boundaries reasonably well compared with Figure~\ref{fig:BP_L2}  where the $L^2$ norm barely generates meaningful results, even with the noise-free target data. 

Although Theorem~\ref{thm:noise} is proved in \cite{engquist2016optimal} under the condition of uniform iid noise, we observe that $W_2$ is insensitive to the noise of other distributions. 
%Moreover, the noise is not necessary to be independent and identically distributed random variables. 
The last example demonstrates the robustness of the $W_2$ distance with respect to mean-zero \textit{correlated} noise.
The robustness to noise comes from local cancellation in optimal transport~\cite{engquist2016optimal}. This advantage is favorable for the trace-by-trace technique to deal with various types of mean-zero noise. 

%%%%%%%%%%%%%%%%%%%%%%%%%%%%%%%%%%%%%%%%%
\section{Challenge Three: Inversion with Reflection-Dominated Data}\label{sec:challenge3}
Another common type of recorded data is the seismic reflection. Reflections carry essential information of the deep region in the subsurface, especially when there are no diving waves or other refracted waves traveling through due to a narrow range of recording, i.e., short offsets. Inversions using the $L^2$ norm as a misfit are prone to update the high-wavenumber components in the model parameter as it is a faster way to decrease the $L^2$ norm objective function than updating the low-wavenumber components of the velocity model. It is related to scattering theory and the particular structure of Born approximation~\cite{Mora1988}. 

% Since seismic reflectivity is often associated with the boundaries between geological formations, such as the top reservoir and salt bodies, this property is most suited for migration. However, conventional FWI using reflection data has been problematic in the absence of a highly accurate initial model. In FWI, the high-wavenumber features updated by reflections often slow down the recovery of the missing low-wavenumber components. Most of the time, the entire scheme is stalled. 

In this section, we discuss a particular layered model, whose velocity only varies vertically. The only available data for inversion is the reflection. Conventional $L^2$-based FWI for this problem has been problematic in the absence of a good initial model since the high-wavenumber features updated by reflections often slow the recovery of the missing low-wavenumber components. Most of the time, the convergence stalls. 
We will see that partial inversion for velocity below the deepest reflecting interface is still possible by using the $W_2$ distance as the objective function in optimization. 

% Subsurface model parameter functions are often discontinuous, and such discontinuities (layered structure) arise very naturally as a result of the material properties of the underlying physical system. Most of the time, reflections are the only available information representing the deepest layer. 

\subsection{Experiment Setup and Inversion Results}
Let us consider a three-layer model illustrated in Figure~\ref{fig:test1_vG}, where the deepest third layer is not present in the initial model as seen in Figure~\ref{fig:test1_vF}. The source in the test is a Ricker wavelet with a peak frequency of 5 Hz. There are in total 52 sources and 301 receivers on top in the first (water) layer. We do not update the model parameter in the first layer because of the strong artifacts in the gradient around the source location. The total recording time is 3.8 seconds. There is naturally no back-scattered information from the interior of the third layer returning to the receivers. Due to natural boundary conditions, the second reflection is the only available information for reconstruction. Nevertheless, we will see that the velocity profile of the third layer can be partially recovered using $W_2$ as the objective function in this PDE-constrained optimization. 

\begin{figure}
\centering
   \subfloat[True velocity]{\includegraphics[height = 3cm]{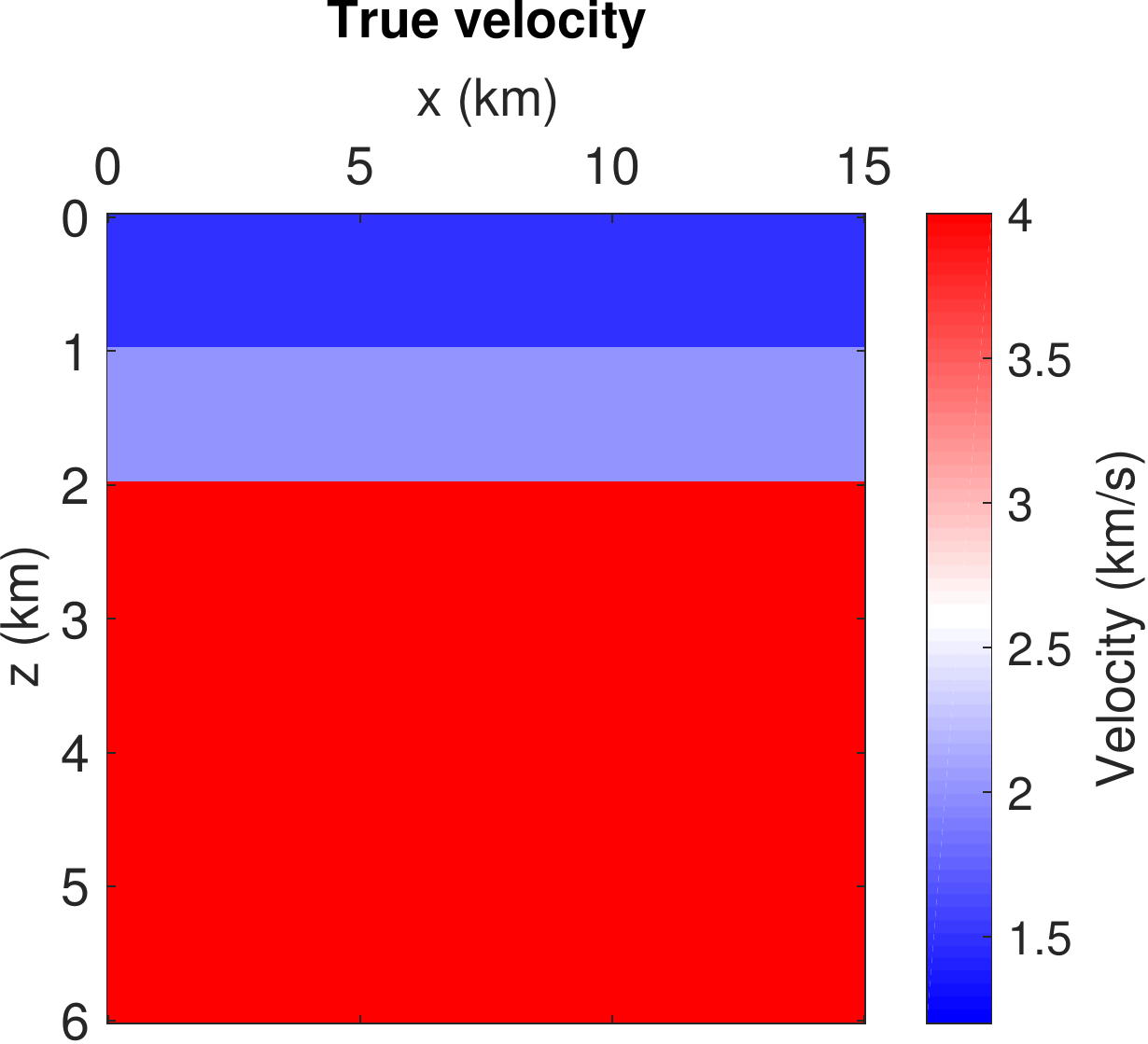}\label{fig:test1_vG}}
   \subfloat[Initial velocity]{\includegraphics[height = 3cm]{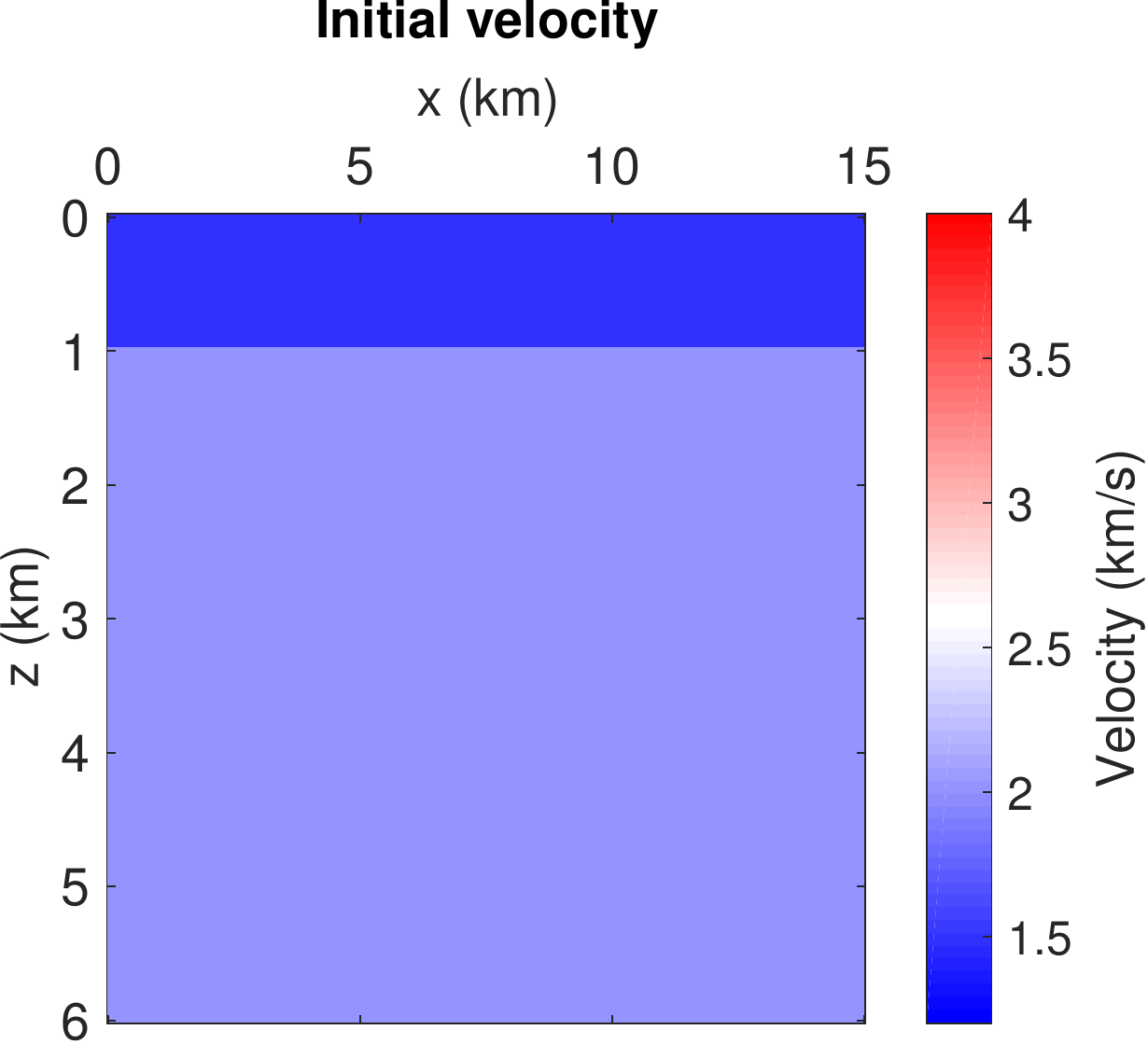}\label{fig:test1_vF}} 
   \subfloat[$L^2$-FWI result]{\includegraphics[height = 3cm]{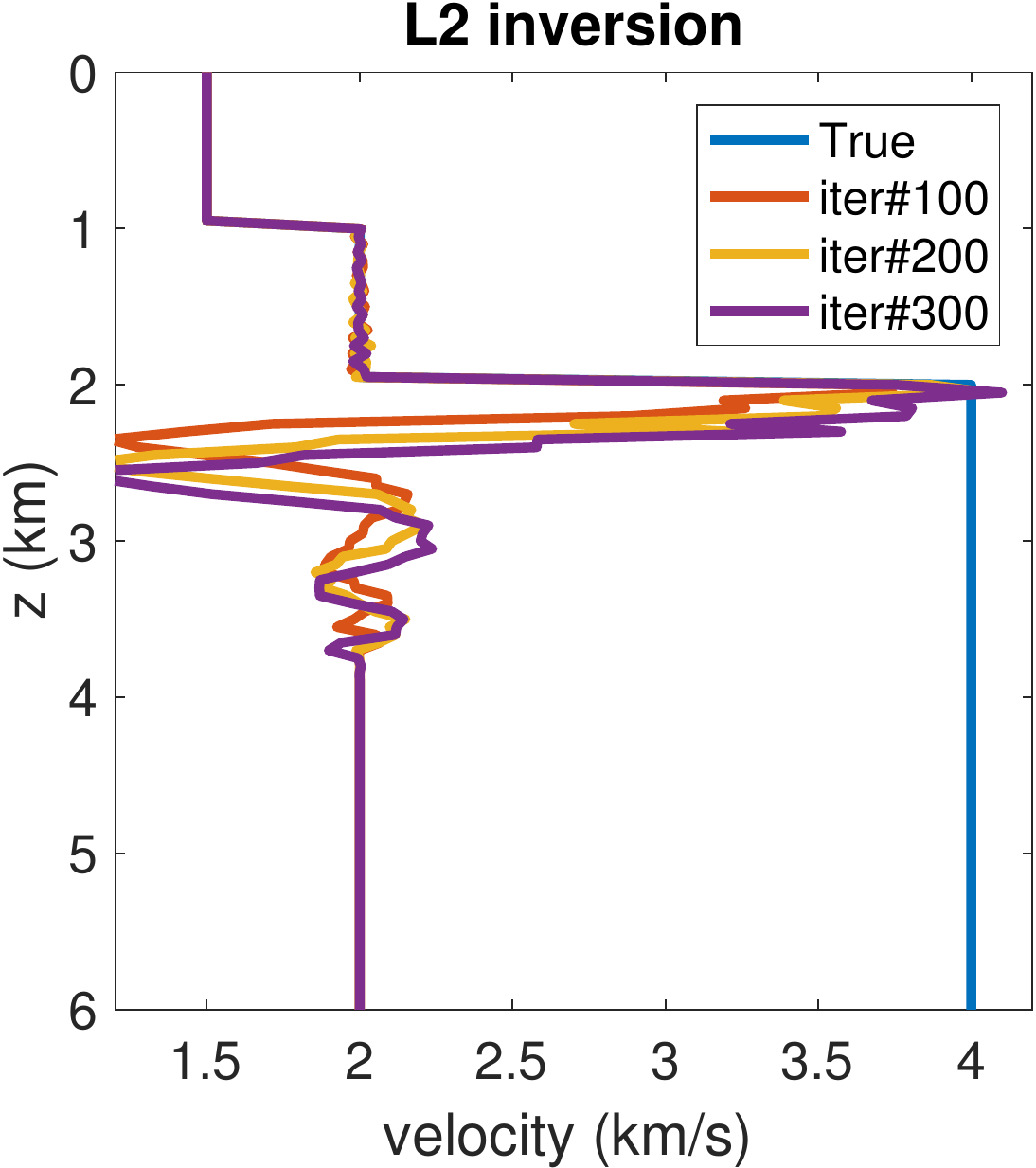}\label{fig:test1_L2}}
   \subfloat[$W_2$-FWI result]{\includegraphics[height = 3cm]{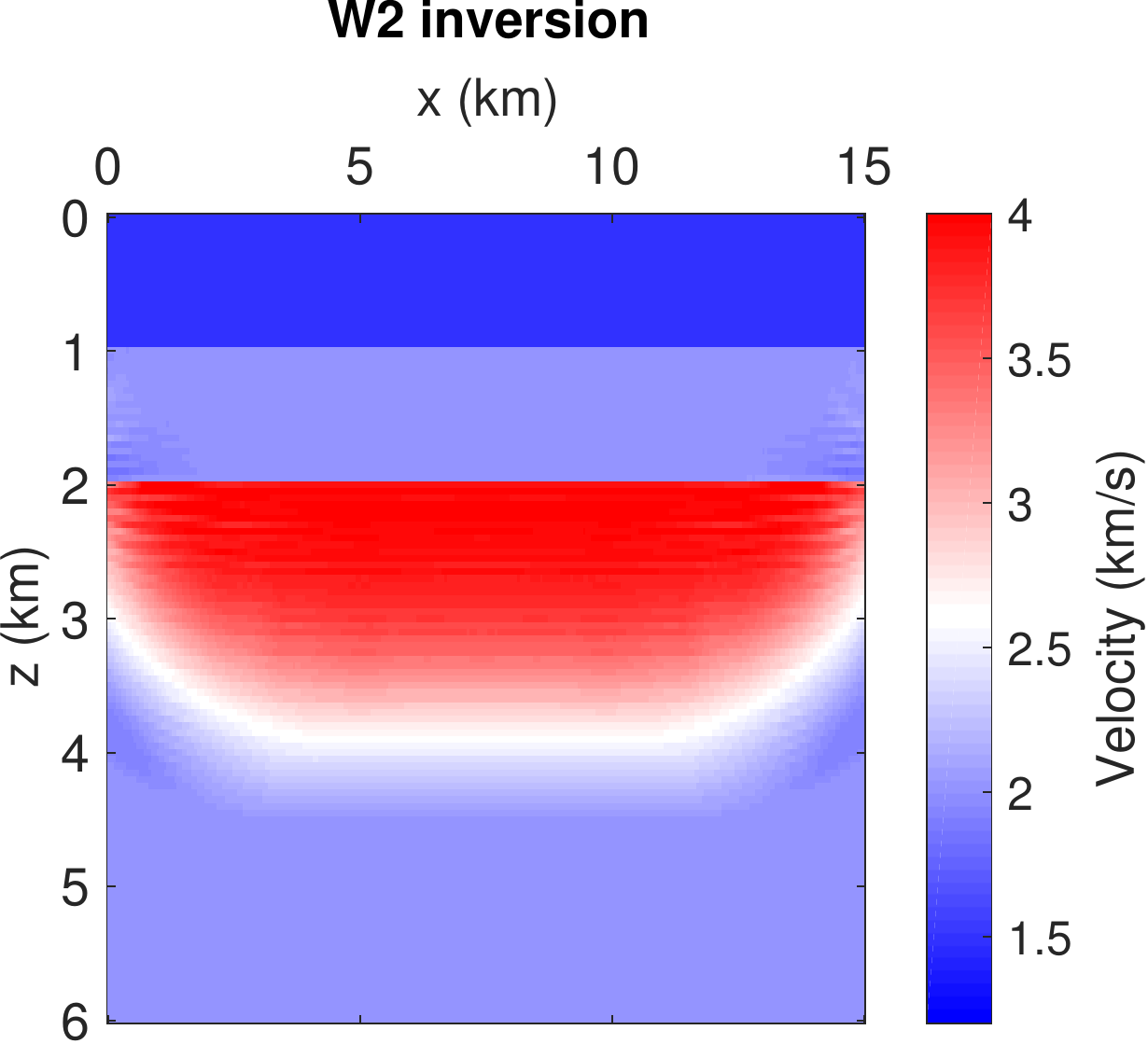}\label{fig:test1_W2}}  
  \caption{Three-layer model: (a) true velocity model; (b) initial velocity model; (c)~$L^2$-FWI final inversion result and (d)~$W_2$-FWI final inversion result.} \label{fig:test1_vel2D}
\end{figure}

After 1000 iterations, $L^2$ based inversion only recovers a migration-type structure with severe overshooting as seen in Figure~\ref{fig:test1_L2}, while FWI with the $W_2$ distance correctly recovers part of the third layer as seen in Figure~\ref{fig:test1_W2}. Figure~\ref{fig:test1_L2_iter} shows that the velocity model of $L^2$-FWI changes so slowly that it does not give much more information than the local velocity change around the interface. Nevertheless, the $W_2$ result in Figure~\ref{fig:test1_W2_iter} gradually recovers part of the sub-layer velocity as the iteration continues. The radius of the model update is much wider in the $W_2$ case. 

\subsection{Discussion}
It seems puzzling at first glance that $W_2$ based FWI can even recover velocity in the model where no seismic wave returned to the surface from below the lowest reflecting interface. In this subsection, we discuss several factors that contribute to the model recovery below the reflectors when using $W_2$ distance. 

\subsubsection{The Role of the Simulated Reflection}
\begin{figure}
\centering
  \subfloat[]{\includegraphics[height = 4cm]{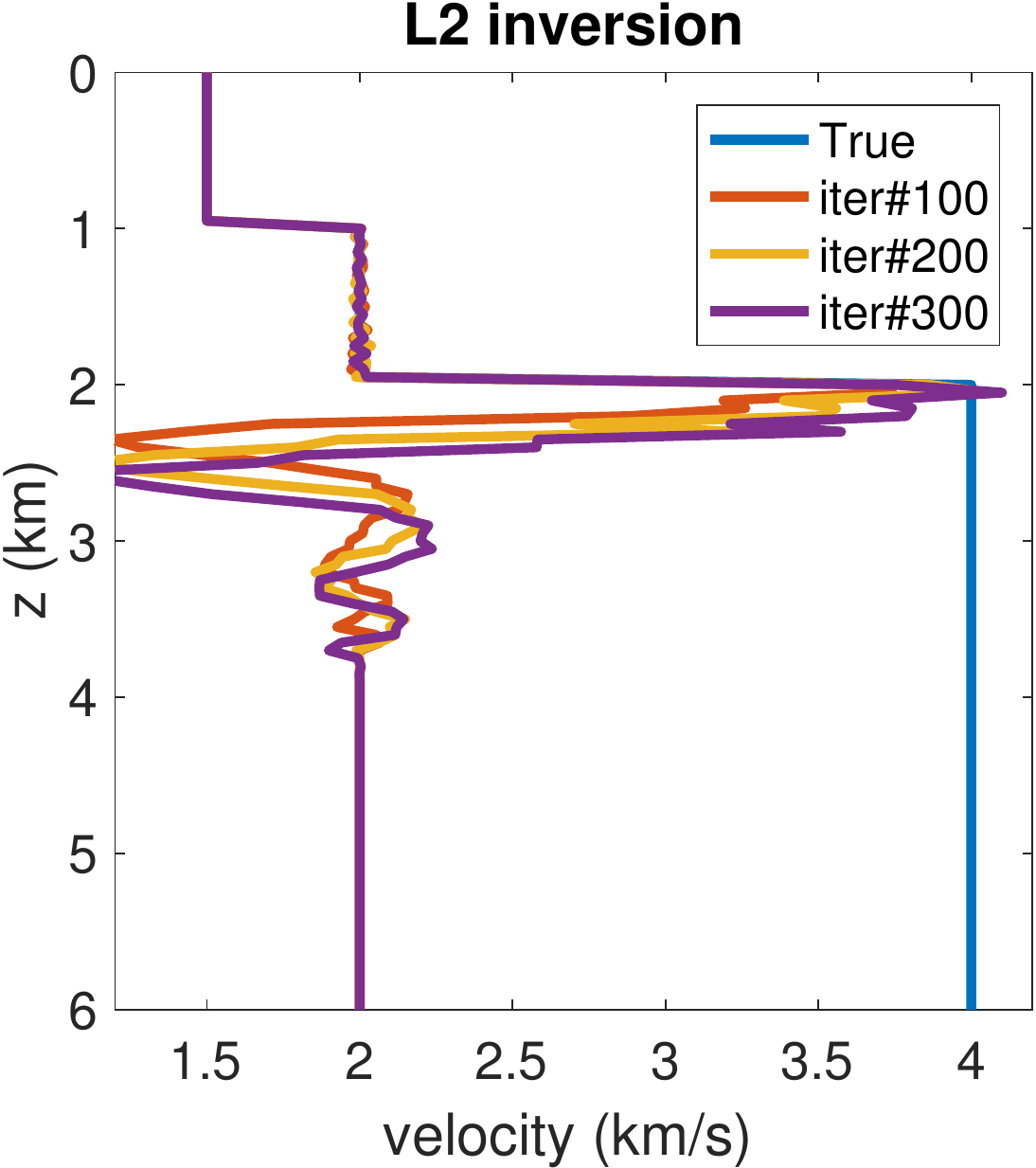}\label{fig:test1_L2_iter}}\hspace{1cm}
  \subfloat[]{\includegraphics[height = 4cm]{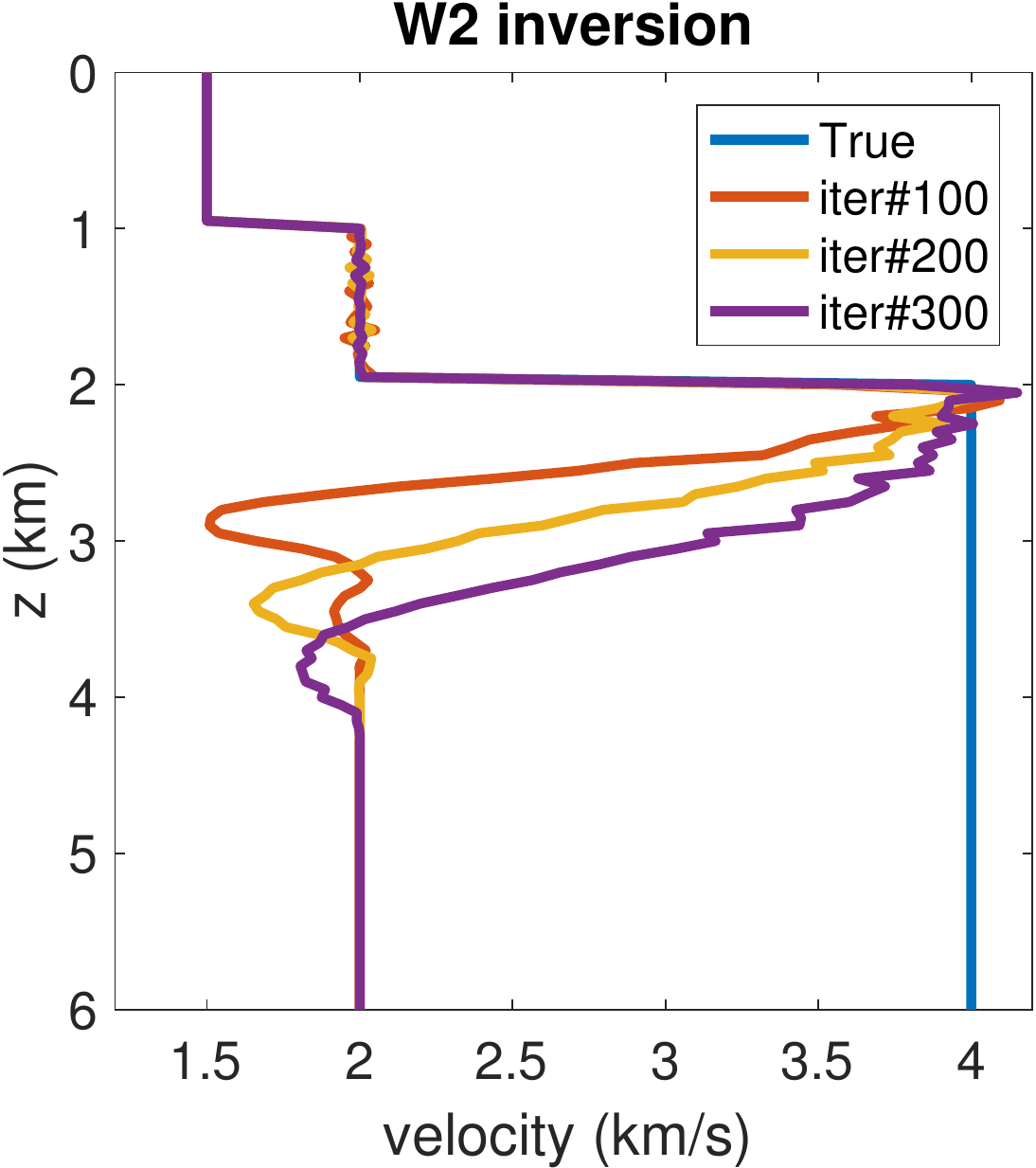}\label{fig:test1_W2_iter}}
  \caption{Three-layer model: (a)~$L^2$ and (b) $W_2$ inversion velocity after 100, 200 and 300 iterations.}\label{fig:test1_iter}
\end{figure}

Figure~\ref{fig:test1_L2_iter} and Figure~\ref{fig:test1_W2_iter} are the vertical velocity profiles of $L^2$ and $W_2$ based FWI after 100, 200 and 300 iterations. The inverted velocity profile in the earlier iterations has a sharp lower transition back to 2 km/s causing simulated reflections in the synthetic data, which are not in the observed data. The simulated reflector is present in the earlier iterations of both $L^2$ and $W_2$ based FWI, as an overshooting phenomenon from gradients based on the adjoint-state method (Equation~\ref{eq:adj_grad3}). However, this simulated reflector can provide us with additional information, especially the velocity model below the reflector.

In the $W_2$ case, the additional reflection contributes greatly to the $W_2$ objective function since it is not in the original target data. Further iterations update the velocity model with the goal of removing the simulated reflection and decreasing the $W_2$ distance. This results in a further correction of the velocity that starts recovery below the deepest reflector. This correction gradually slows when the lower transition is no longer sharp. As seen in Figure~\ref{fig:test1_W2}, the velocity of the third layer smoothly decreases from 4 km/s to 2 km/s. 
As more of the model below the reflector is correctly reconstructed, the simulated reflection appears gradually later, and finally is not recorded within the time range of 3.8 seconds. The short recording time only gives us limited information about the subsurface model. The incomplete reconstruction in Figure~\ref{fig:test1_W2} is a result of using \textit{partial} Cauchy boundary data, which again demonstrates the ill-posedness of the problem.

Although the simulated reflection can provide additional information about the sub-layer velocity model, it does not affect the overall $L^2$ misfit as much as it does in the $W_2$ case. The fundamental reason lies in the properties of these two metrics. The $L^2$ norm only considers signal intensities while the simulated reflection has a quite small amplitude. Therefore, FWI using the $L^2$ norm cannot quite ``see'' the existence of the simulated reflection. On the other hand, the Wasserstein distance is a global metric in the sense that both phase and amplitude differences are considered in the metric definition. For the case of $p=2$ proposed in this paper, the phase difference is squared and consequently plays a more critical role in the FWI objective function. The large phase mismatch caused by the simulated reflection strongly affects the $W_2$ distance, and consequently leads to further updates in the model below the reflectors.

\begin{figure}
\centering
  \subfloat[Velocity of thin layer]{\includegraphics[width=0.3\textwidth]{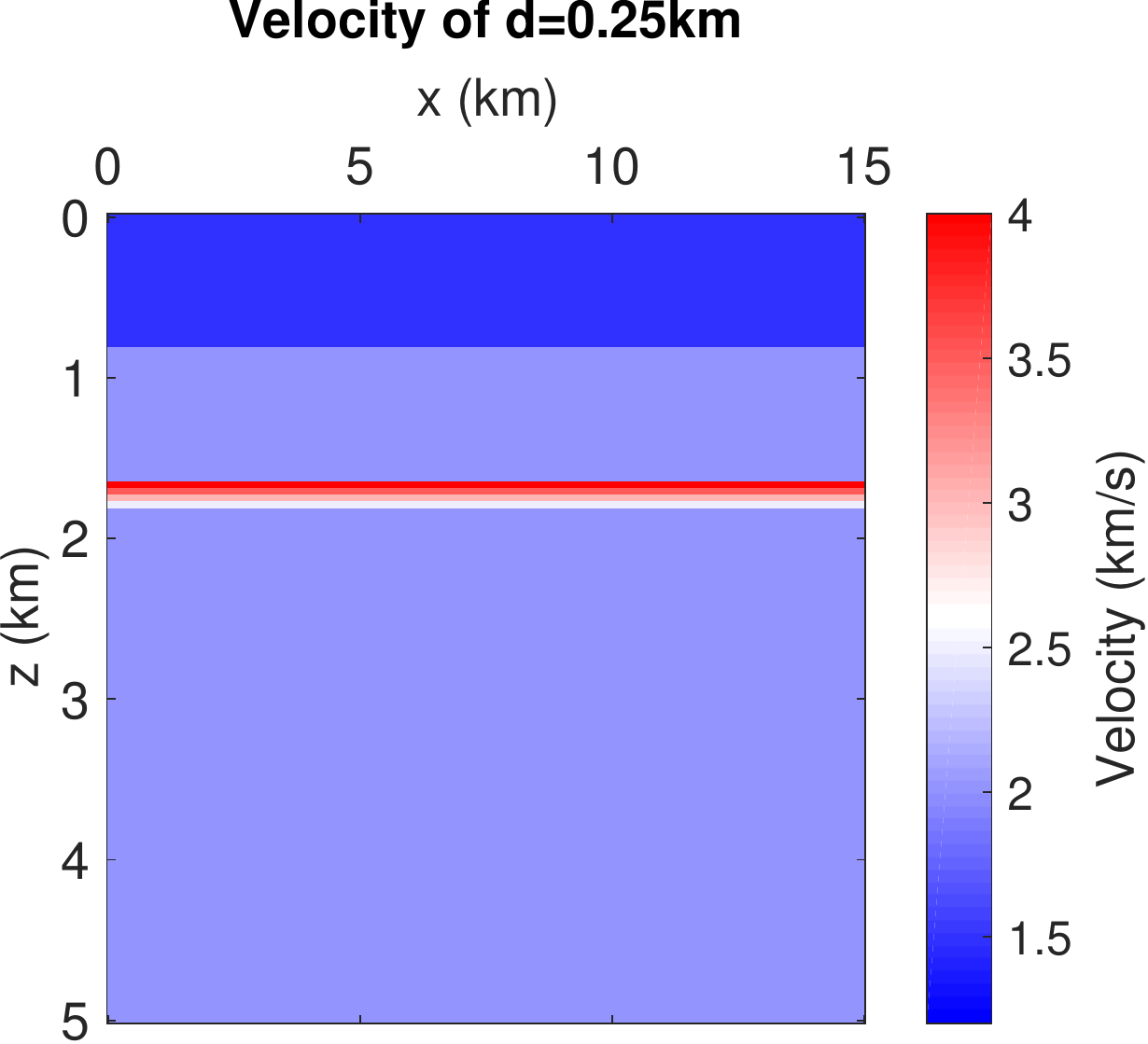}\label{fig:test1_vF5}} \hspace{0.2cm}
  \subfloat[Velocity of thick layer]{\includegraphics[width=0.3\textwidth]{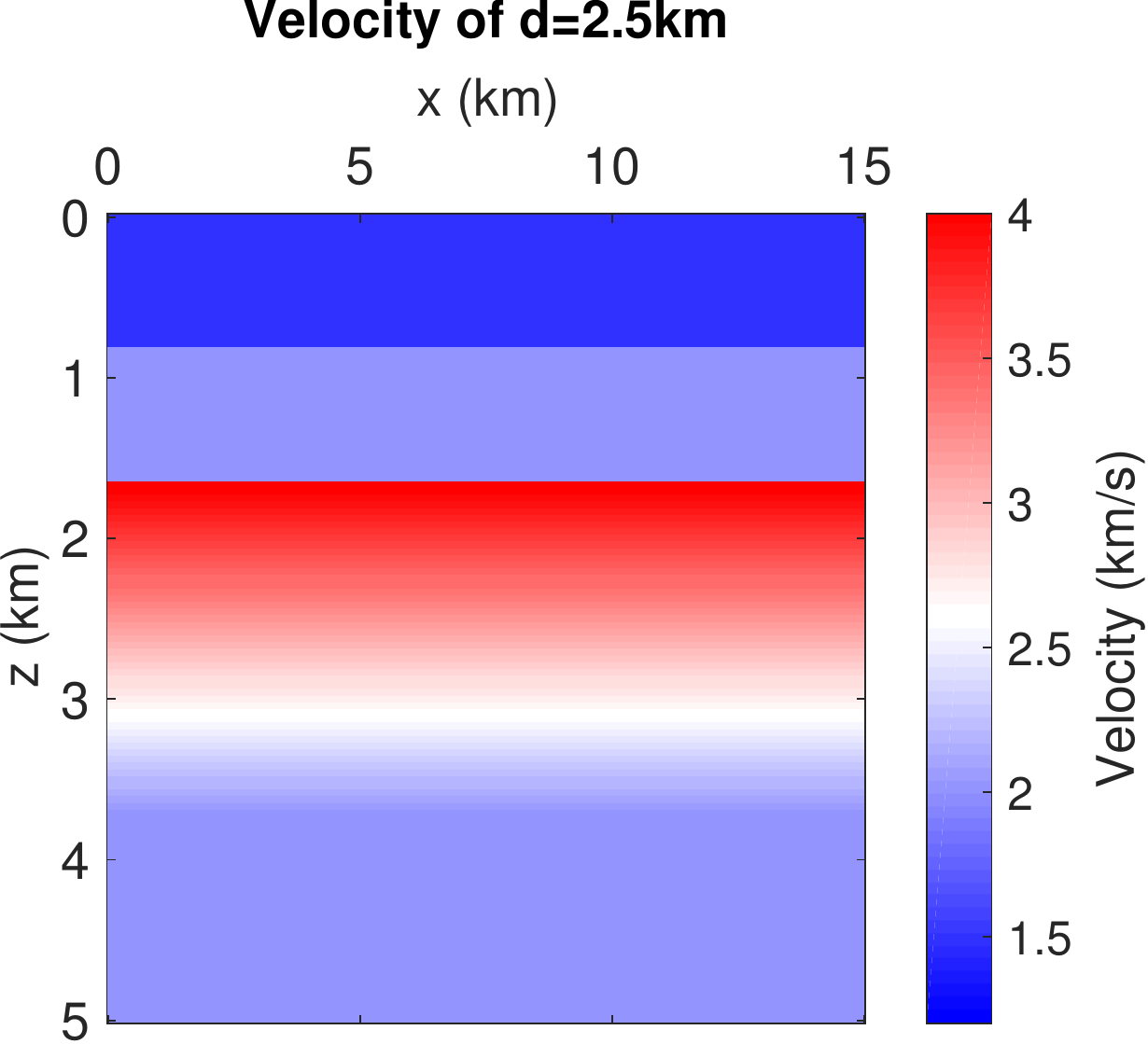}\label{fig:test1_vF50}}  \hspace{0.2cm}
  \subfloat[True velocity]{\includegraphics[width=0.3\textwidth]{test1-vG.pdf}\label{fig:test1_vtrue}}\\
  \subfloat[Wavefield generated by (a)]{\includegraphics[width=0.33\textwidth]{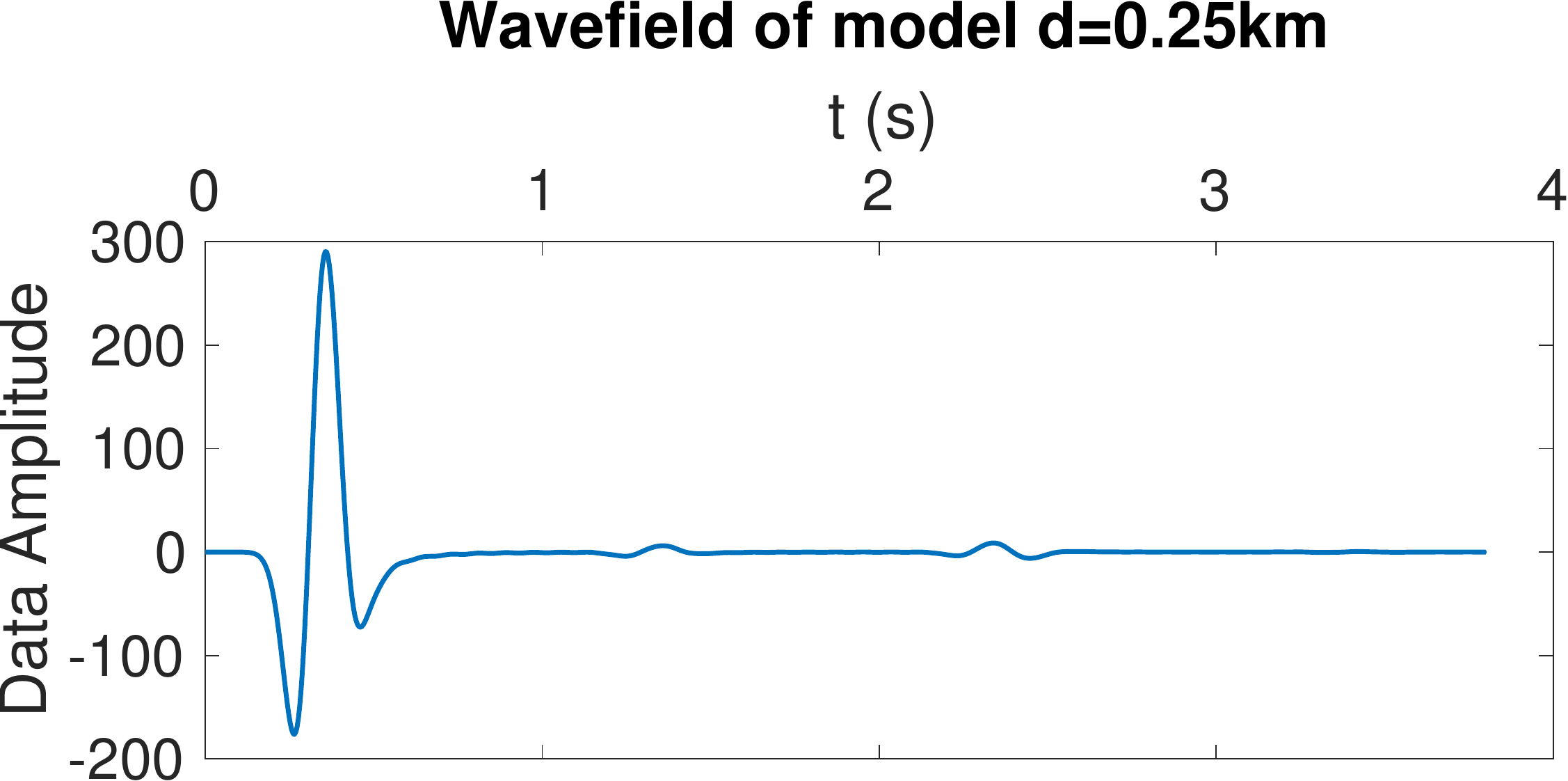}\label{fig:test1_d5}}
  \subfloat[Wavefield generated by (b)]{\includegraphics[width=0.33\textwidth]{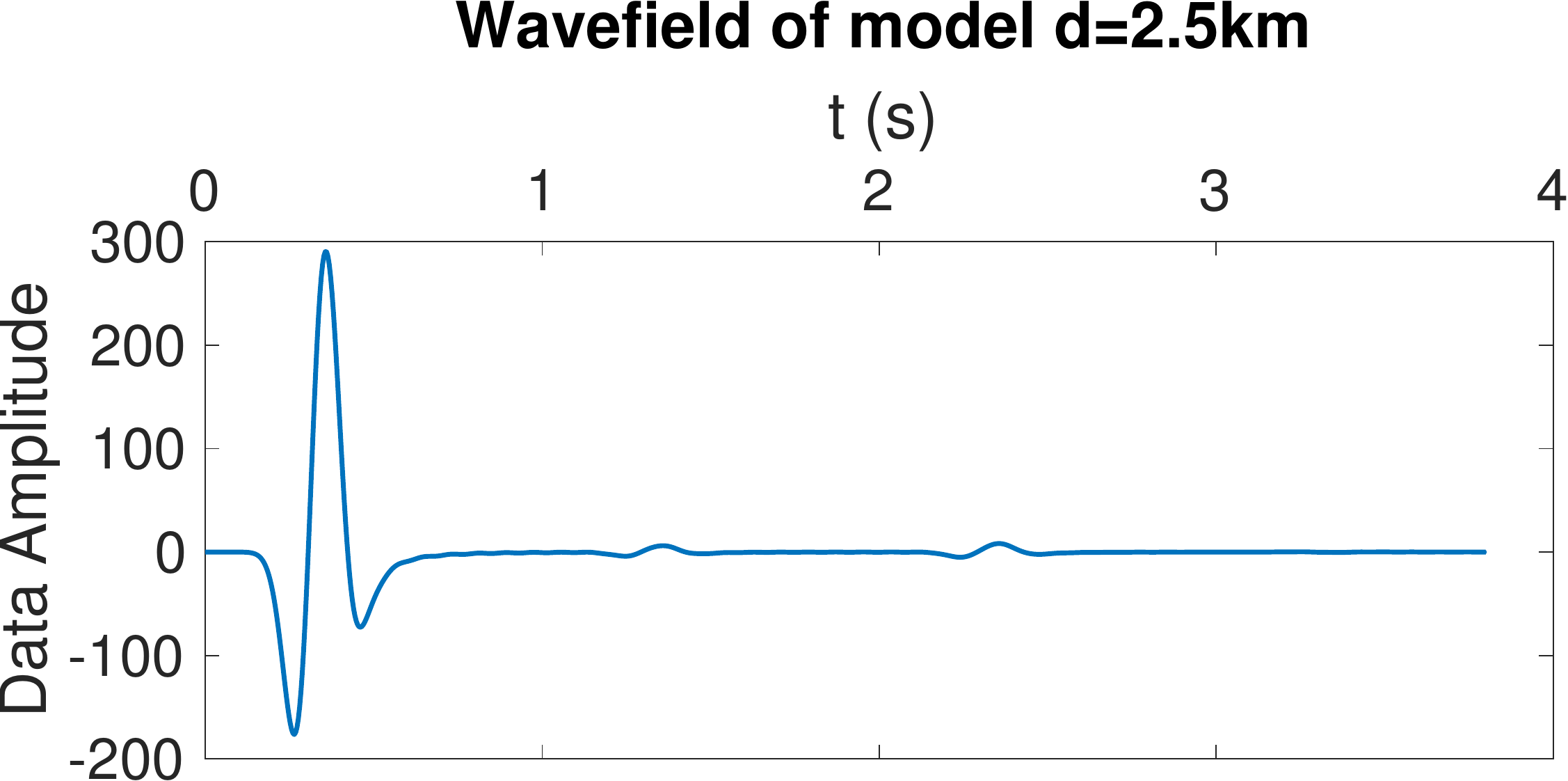}\label{fig:test1_d50}}
  \subfloat[Wavefield generated by (c)]{\includegraphics[width=0.33\textwidth]{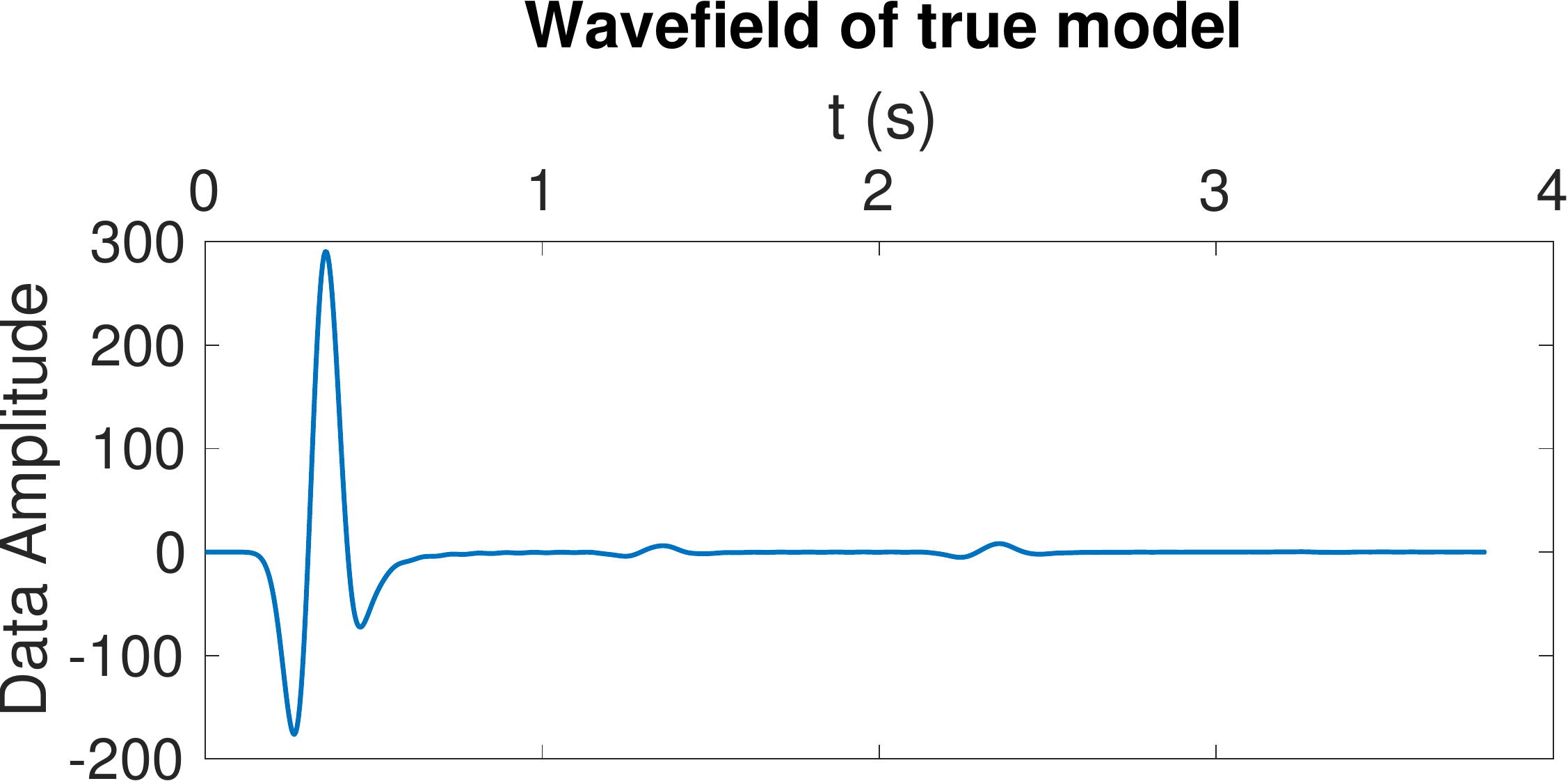}\label{fig:test1_dtrue}}
  \caption{Three-layer model: (a)(b)(c) velocity models with different layer thickness and (d)(e)(f) wavefield generated by three velocity models at $x=7.5$km, $z=0.25$km, i.e.,  $u(x=7.5,z=0.25,t)$ and $0\leq t\leq 3.8s$.}
  \label{fig:test1_same}
\end{figure}

\begin{figure}
\centering
  \subfloat[Data residual of model $d=0.25$km]{\includegraphics[height=3cm]{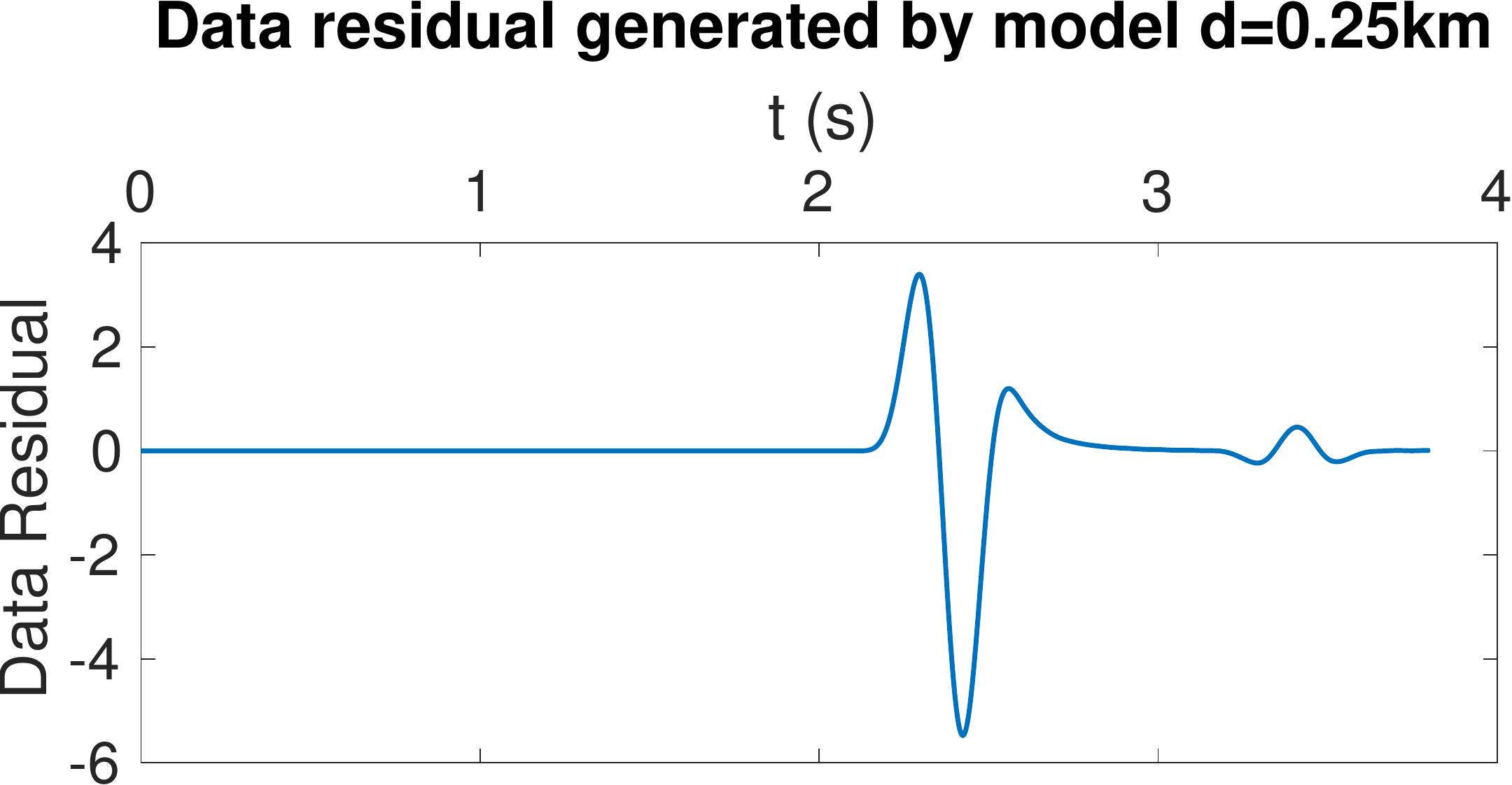}\label{fig:test1_d5_res}} \hspace{1cm}
  \subfloat[Data residual of model $d=2.5$km]{\includegraphics[height=3cm]{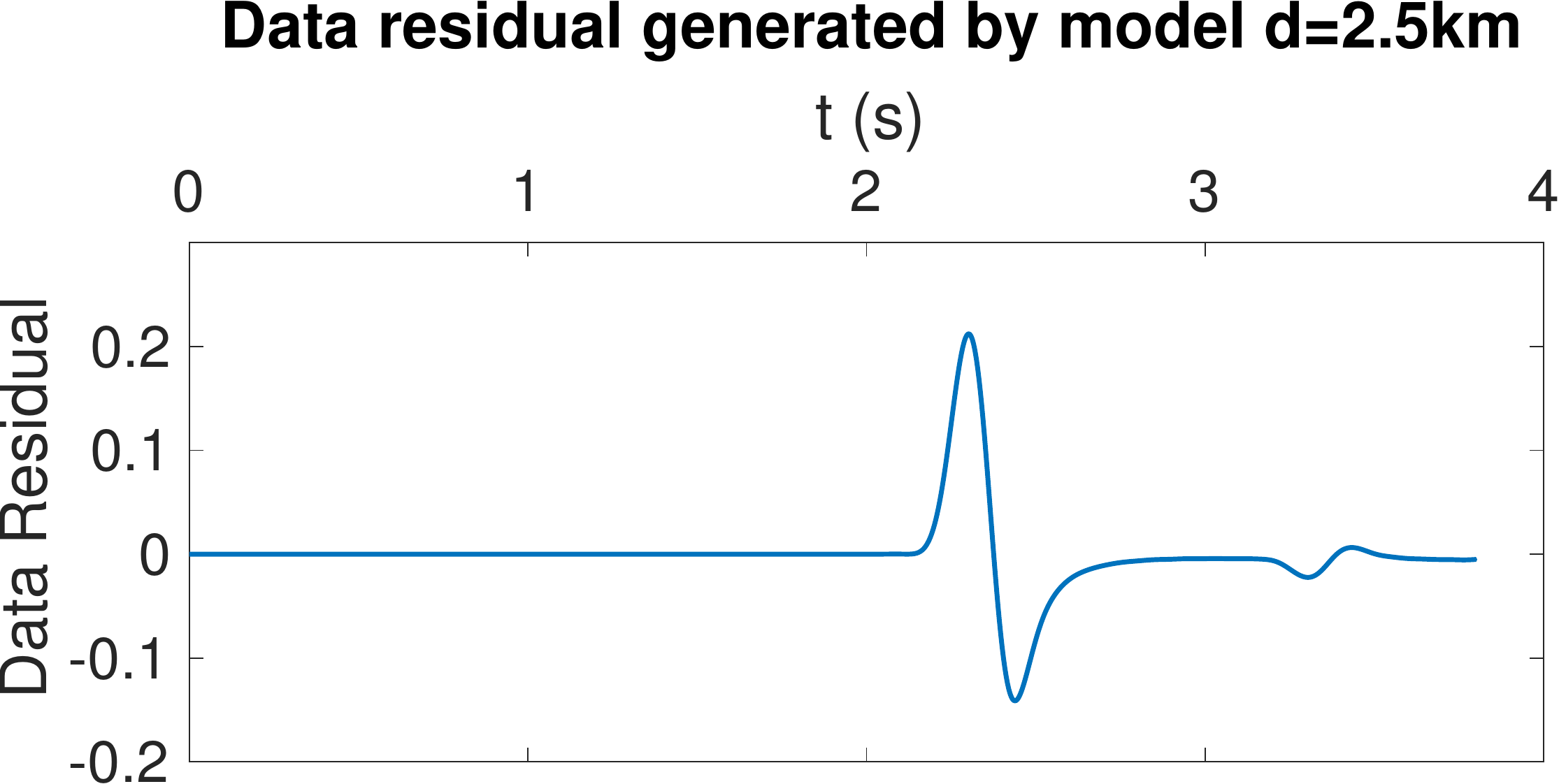}\label{fig:test1_d50_res}} 
  \caption{Three-layer model: (a)~the difference between data in Figure~\ref{fig:test1_d5} and Figure~\ref{fig:test1_dtrue};~(b)~the difference between data in Figure~\ref{fig:test1_d50} and Figure~\ref{fig:test1_dtrue}.}
  \label{fig:test1_diff}
\end{figure}

\subsubsection{Layer Thickness is Embedded in Reflection Amplitude}
Besides the simulated reflection as a result of overshooting, the velocity or the thickness of the third layer is implicitly embedded in the amplitude of the second reflection as well as the head waves. As low-frequency components of the wavefield keep propagating downward into a thick layer, less energy is reflected back to the receivers. However, the majority of the low- and high- frequency energy will be reflected if it were a thin layer instead. In other words, the amplitude of the reflection itself already contains enough information to uncover the thickness of the layer below the reflectors theoretically.

For further demonstration, we construct three velocity models in Figure~\ref{fig:test1_vF5},~\ref{fig:test1_vF50} and \ref{fig:test1_vtrue} with different thickness in the third layer, which are 0.25km, 2.5km and 4km respectively. In the Fourier domain, these three velocity models only differ in low-wavenumber components. 
Keeping other settings the same, wave propagation through the three velocity models renders three wavefields. At the spatial location $x=7.5$km, $z=0.25$km, the recorded waveforms in time are plotted in Figure~\ref{fig:test1_d5},~\ref{fig:test1_d50} and~\ref{fig:test1_dtrue}. As seen in Figure~\ref{fig:test1_same}, the three corresponding waveforms, which is often referred to as data, are quite similar in both amplitude and phase. Nevertheless, the data itself carries the information about the thickness of the layer. The key differences lie in the amplitude of the second reflection. We plot the data residuals in Figure~\ref{fig:test1_diff}. Figure~\ref{fig:test1_d5_res} shows the difference between the data in Figure~\ref{fig:test1_d5} and \ref{fig:test1_dtrue} while Figure~\ref{fig:test1_d50_res} shows the difference between the data in Figure~\ref{fig:test1_d50} and \ref{fig:test1_dtrue}. As the layer thickness increases from $0.25$km to $2.5$km, the $\ell^\infty$ norm of the data residual decreases from $5$ to $0.2$, which again demonstrates that the layer thickness is embedded in the reflection amplitudes.

\subsubsection{The Residual in the Fourier Domain}
In this section, we compare the differences of data residual measured by $L^2$ and $W_2$ in the Fourier domain. Figure~\ref{fig:refl_res_fft} shows the residual spectrum of both $L^2$-FWI and $W_2$-FWI at different iterations. The residual is computed as the difference between synthetic data $f(\mathbf{x_r},t;m)$ generated by the model $m$ at current iteration and the true data $g(\mathbf{x_r},t)$:
\bq \label{eq:residual}
\text{residual} = f(\mathbf{x_r},t;m) - g(\mathbf{x_r},t).
\eq
Figure~\ref{fig:refl_L2_iter1_fft} and Figure~\ref{fig:refl_W2_iter1_fft} show that $L^2$ and $W_2$ inversions have the same initial data residual. As seen in both Figure~\ref{fig:refl_L2_iter51_fft} and Figure~\ref{fig:refl_L2_iter101_fft}, the inversion driven by $L^2$ focuses on reducing the high-frequency residual first and slowly decreases the low-frequency residual later. On the contrary, $W_2$ based inversion reduces the smooth parts of the residual first and then gradually switches to the oscillatory parts by comparing between Figure~\ref{fig:refl_W2_iter51_fft}  and Figure~\ref{fig:refl_W2_iter101_fft}. 

\begin{figure}
\centering
   \subfloat[$L^2$-FWI, iteration 1]{\includegraphics[width=0.33\textwidth]{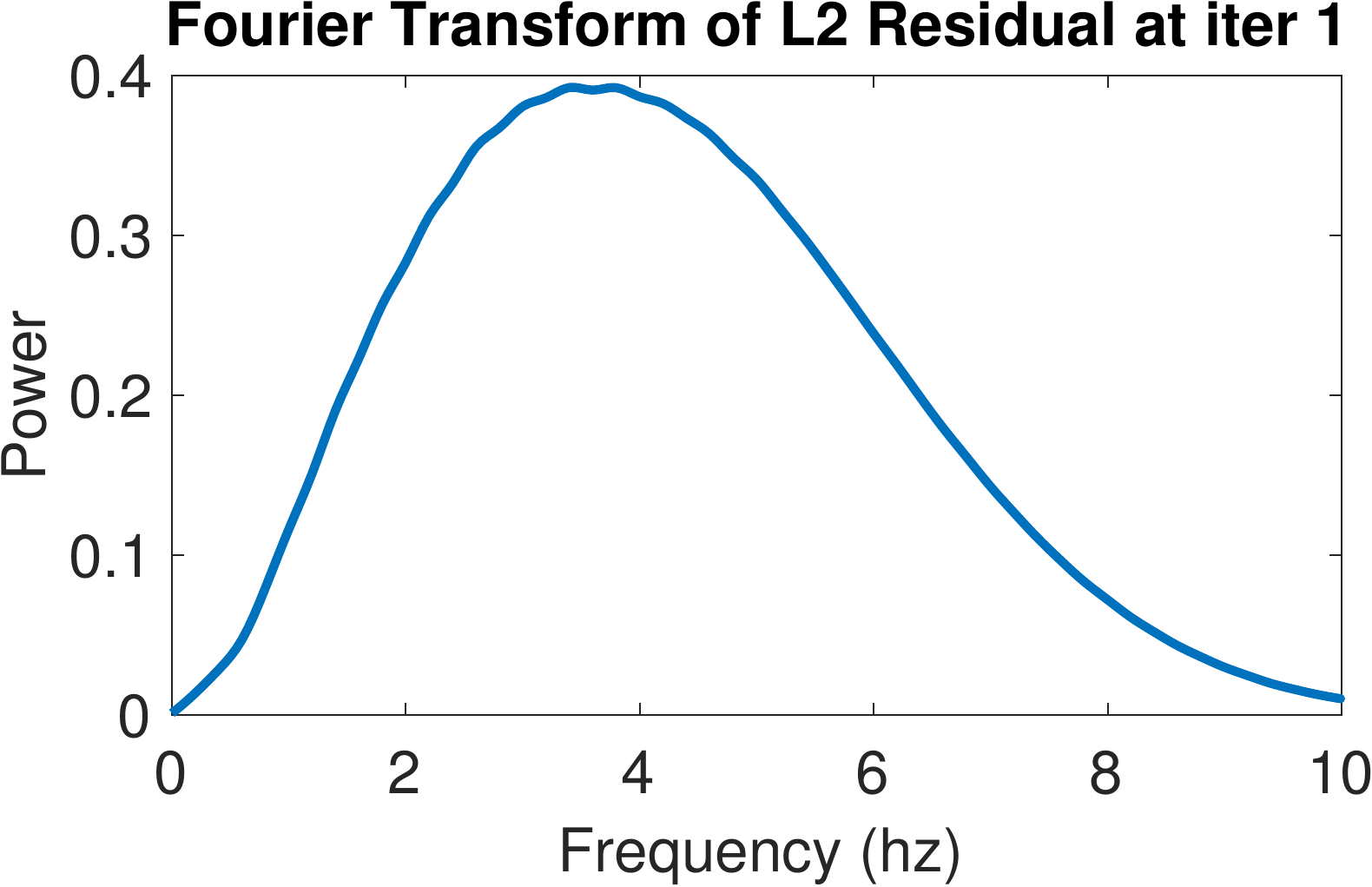}\label{fig:refl_L2_iter1_fft}}
   \subfloat[$L^2$-FWI, iteration 51]{\includegraphics[width=0.33\textwidth]{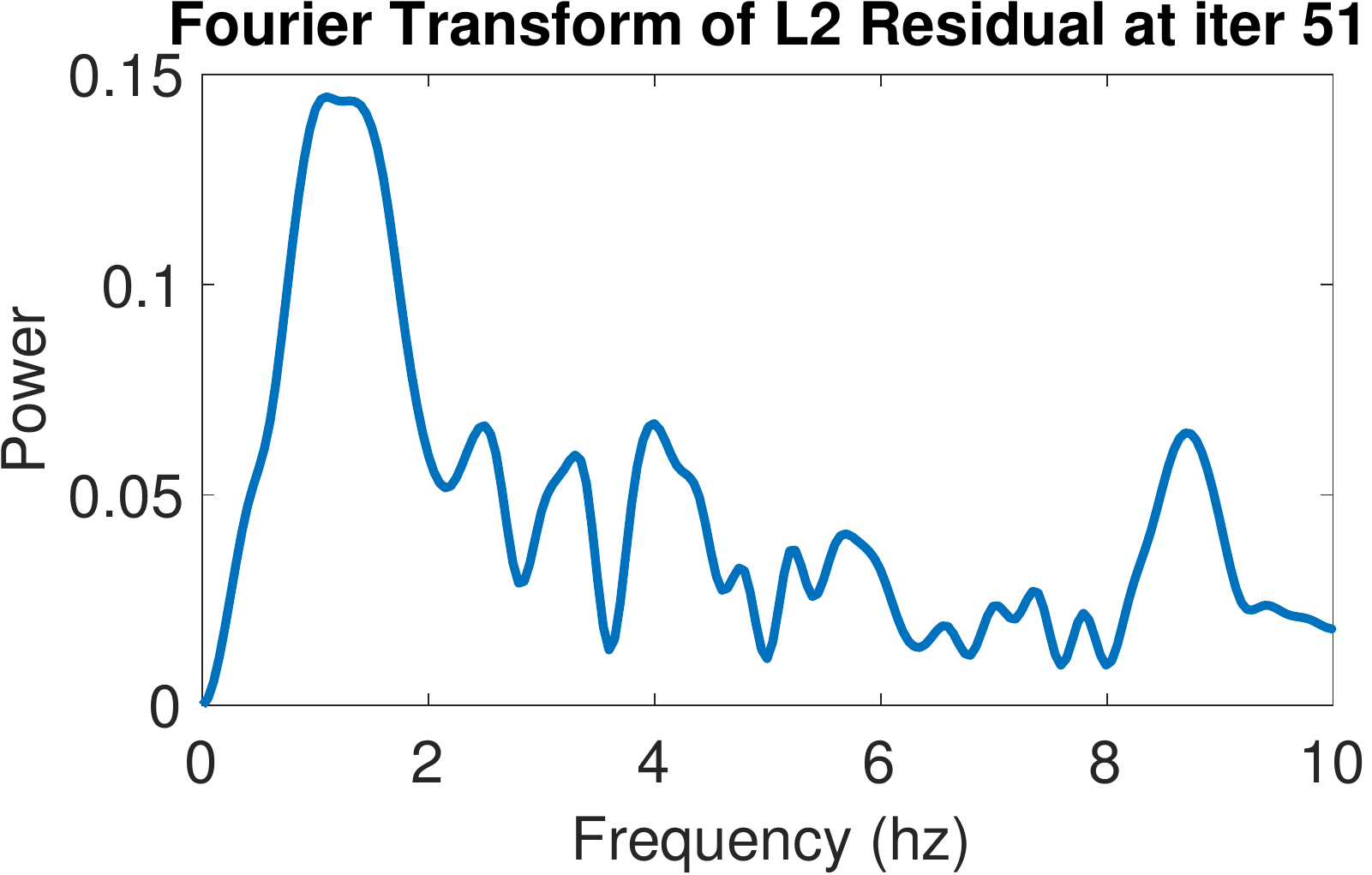}\label{fig:refl_L2_iter51_fft}}
   \subfloat[$L^2$-FWI, iteration 101]{\includegraphics[width=0.33\textwidth]{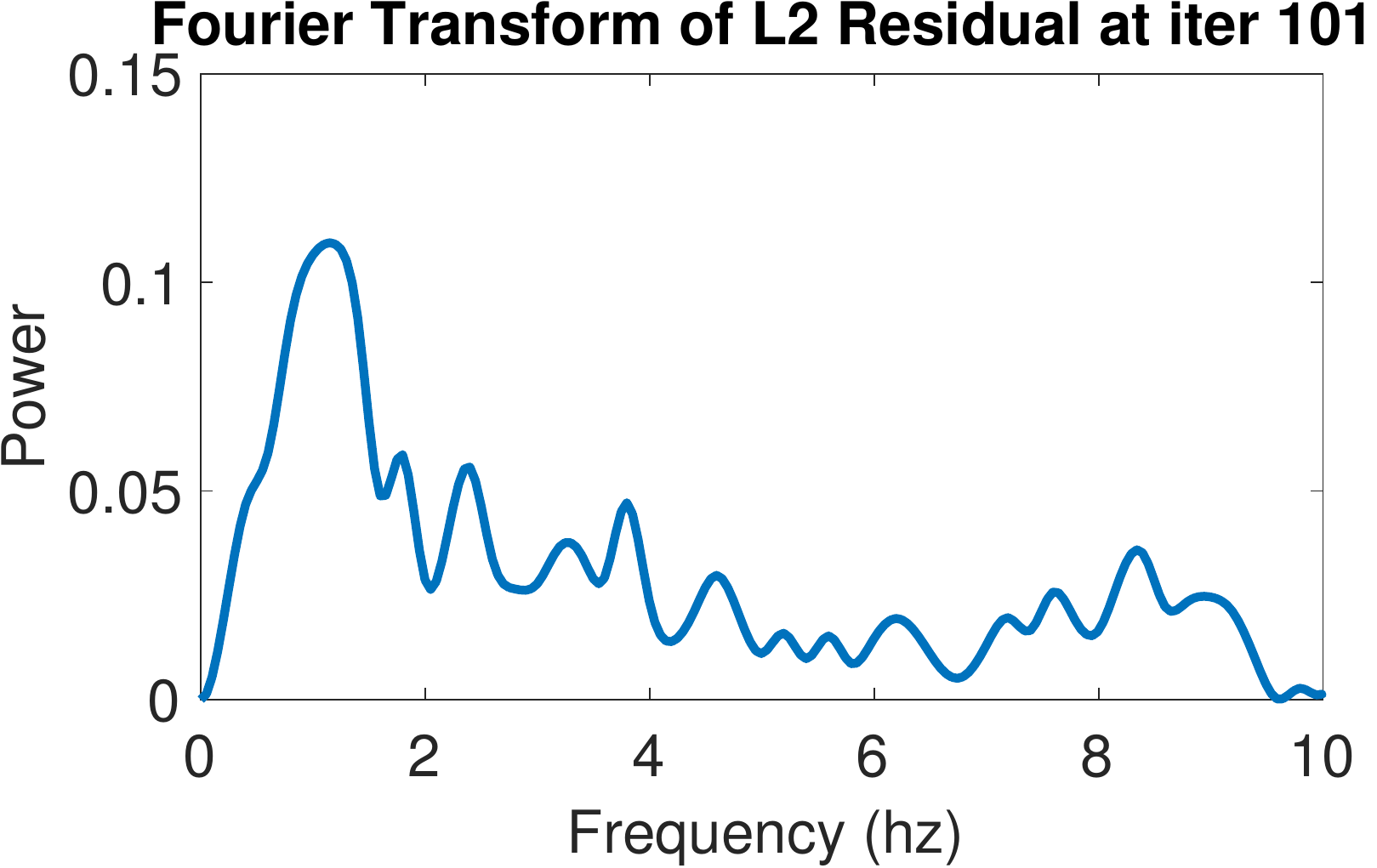}\label{fig:refl_L2_iter101_fft}}\\
      \subfloat[$W_2$-FWI, iteration 1]{\includegraphics[width=0.33\textwidth]{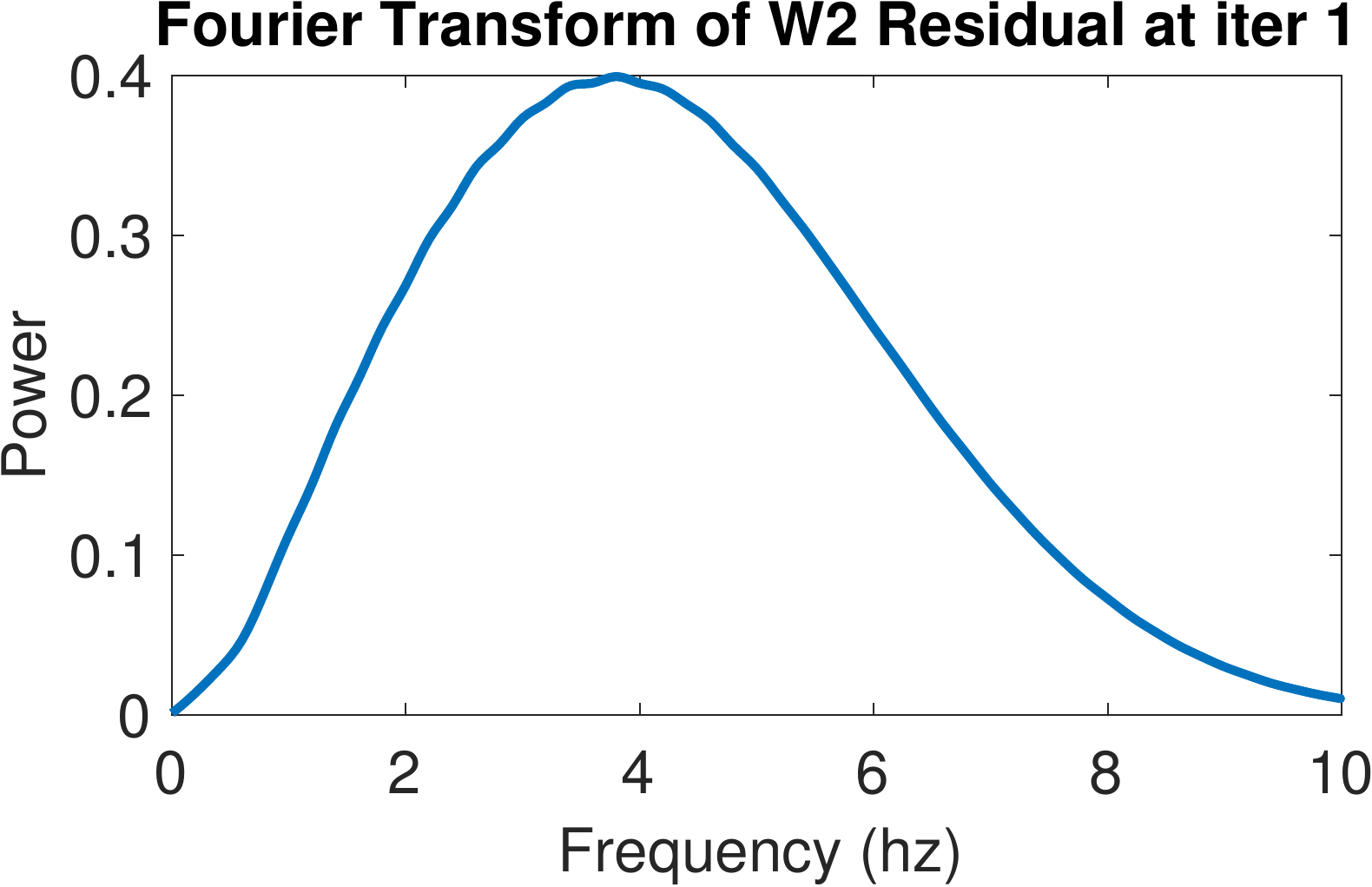}\label{fig:refl_W2_iter1_fft}}
   \subfloat[$W_2$-FWI, iteration 51]{\includegraphics[width=0.33\textwidth]{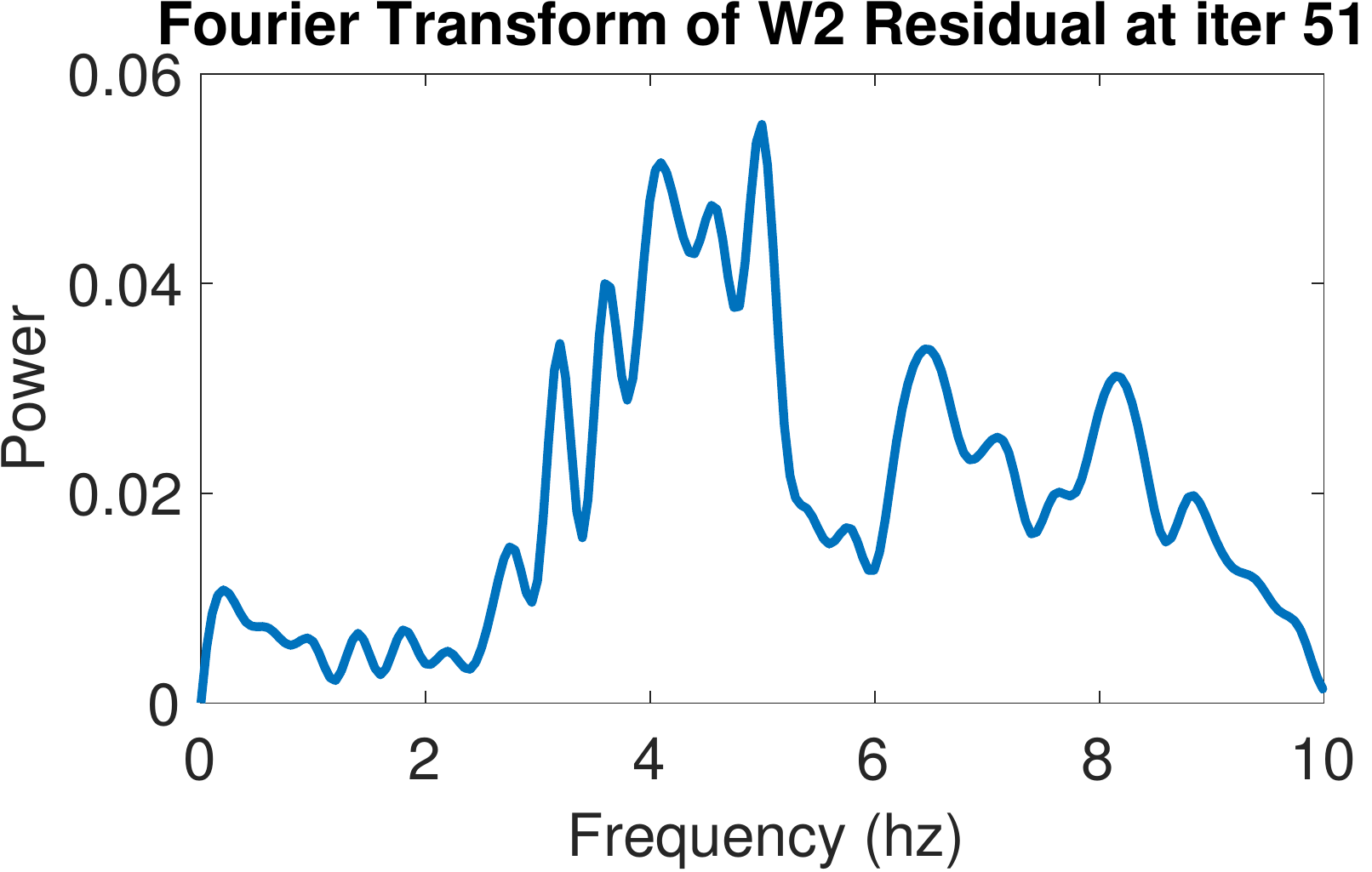}\label{fig:refl_W2_iter51_fft}}
   \subfloat[$W_2$-FWI, iteration 101]{\includegraphics[width=0.33\textwidth]{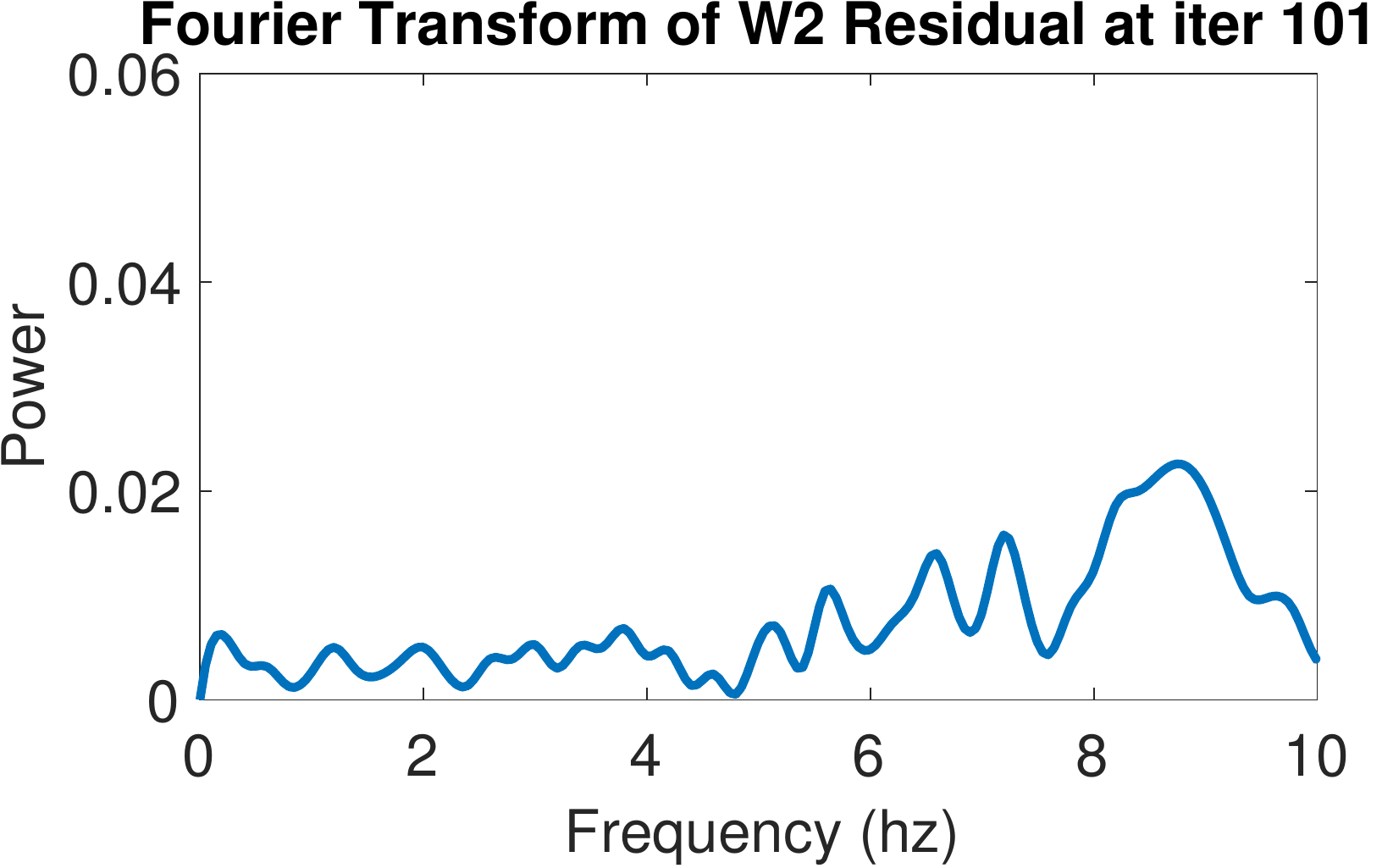}\label{fig:refl_W2_iter101_fft}}
   \caption{Three-layer model: the Fourier transform of the data residual computed as Equation~\eqref{eq:residual} at iteration 1, 51, and 101 of $L^2$ based inversion in (a), (b) and (c) and $W_2$ based inversion in (d), (e) and (f).}~\label{fig:refl_res_fft}
\end{figure}

The Fr\'{e}chet derivatives of the objective function with respect to the simulated data $f$ could help explain different features seen in Figure~\ref{fig:refl_res_fft}. 
%The adjoint-state method is a widely used method in large-scale inverse problems~\cite{Plessix}. It can be seen as an extremely efficient way of obtaining the model gradient based on chain rule. In a discrete setting, the chain rule can also be interpreted as the matrix multiplication of the two gradients $\frac{\partial J}{\partial f}$ and $\frac{\partial f}{\partial m}$:
%\bq~\label{eq:chain_rule}
%\frac{\partial J}{\partial m} = \left(\frac{\partial f}{\partial m} \right)^T  \frac{\partial J}{\partial f}.
%\eq
%We will refer $\frac{\partial J}{\partial m}$ as the model gradient and $\frac{\partial J}{\partial f}$ as the data gradient hereafter. Different objective functions only affect the model gradient through $\frac{\partial J}{\partial f}$ which is also the adjoint source term in the adjoint-state equation~\eqref{eq:FWI_adj}.
When the phases of $f$ and $g$ do not match,  the term $t-G^{-1}(F(t))$ in Equation~\eqref{eq:1D_ADS_C} (the $W_2$ data gradient) represents strong low-wavenumber update. This step is similar to traveltime tomography~\cite{luo1991wave} and other phase-based inversions. Once the phases of $f$ and $g$ match, the term $t-G^{-1}(F(t))$ no longer represents low-frequency parts of the residual. Instead, it represents amplitude differences and strong high-frequency information. As inversion continues, there is a gradual change in the data gradient from smooth modes to oscillatory modes.

Consequently, the model gradient shares a similar trend because of the chain rule. At this later stage, the $W_2$ metric is similar to a local metric that focuses on the point-by-point comparison. The Fourier transform of the data residual in Figure~\ref{fig:refl_W2_iter51_fft} and Figure~\ref{fig:refl_W2_iter101_fft} illustrate this intrinsic property of $W_2$, that the smooth-mode data residual is corrected in earlier iterations (by the smooth gradients), while the oscillatory-mode data residual is only corrected once the smooth-mode error is diminished.
To be more precise, once $f$ is updated to be close enough to $g$ in the optimization, the distance $W_2(f,g)$ is equivalent to the weighted $H^{-1}$ norm, as we have discussed in~\cite{yangletter}.

\subsubsection{Issues Beyond Local Minima}
%Full-waveform inversion (FWI) is technically a data-fitting approach, similar to many other inverse problems that are formulated as PDE-constrained optimization. However, data fitting is not necessarily equivalent to model fitting, even in this simple three-layer model. The difficulty exists in many highly nonlinear inverse problems.
The data and model convergence curves in~Figure~\ref{fig:errors} raise this question about the goal of fitting the model versus fitting the data. In Figure~\ref{fig:test2_data_conv}, both the normalized $L^2$ norm and $W_2$ distance are reduced from 1 to nearly 0. However, moving from the data convergence curves to the model convergence in Figure~\ref{fig:test2_model_conv2}, $L^2$-FWI is much slower in reducing model error than the performance of $W_2$-FWI. Even if we allow 1800 iterations for $L^2$-FWI, the Frobenius norm of the model error decreases by only 7\% as seen in Figure~\ref{fig:test2_model_conv}. The fact that the $L^2$ misfit decreases from 1 to almost 0 is another demonstration that $L^2$ norm based inversion \textit{does not} suffer from local minima in this particular three-layer example. It is the particular sensitivity to high-frequency residual that prevents $L^2$ from efficiently extracting the model kinematics in the reflection data beside the reflector location.

On the other hand, $W_2$ based inversion is prone to reduce low-frequency residual first and high-frequency residual later. It reduces the model error by more than twice that amount in only 150 iterations.
Since the computational cost per iteration is the same in both cases by computing the $W_2$ distance explicitly in 1D as we have discussed in~\eqref{eq:myOT1D}, inversion using the $W_2$ distance as the objective function lowers the model error more quickly. Reflection differences among Figure~\ref{fig:test1_d5}, Figure~\ref{fig:test1_d50} and Figure~\ref{fig:test1_dtrue} are measured in a nonlinear way by the $W_2$ distance. From Figure~\ref{fig:refl_W2_iter1_fft} to Figure~\ref{fig:refl_W2_iter101_fft}, the residual spectrum changes in a hierarchical order. 

The successful reconstruction of the BP model in Figure~\ref{fig:BP2_true,BP2_v0} shows that properties of the $W_2$ distance illustrated in the simple layered models of Section~\ref{sec:challenge3} can also be seen in a more realistic setting. $W_2$ based inversion captures the essential amplitude information in the reflection in a particular order and successfully reconstructs the thickness of the layer below the reflectors. It updates the missing low-wavenumber components of the model and recovers velocity information below the deepest reflecting interface. The focus on the low-frequency content of the data is the key manifestation of the ``transport'' idea.

\begin{figure}
\centering
   \subfloat[Data Error ]{\includegraphics[width=0.33\textwidth]{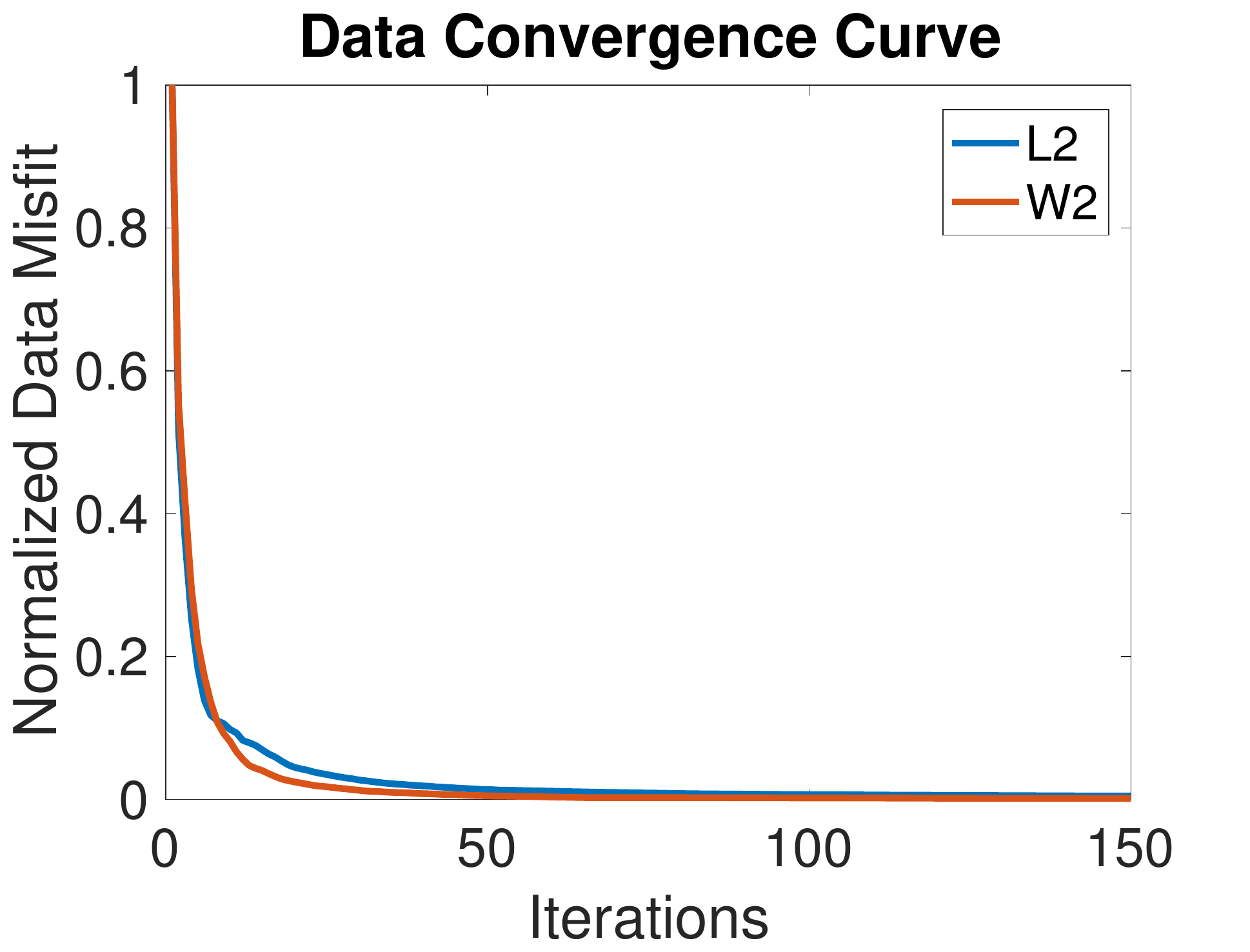}\label{fig:test2_data_conv}}
      \subfloat[Model Error]{\includegraphics[width=0.33\textwidth]{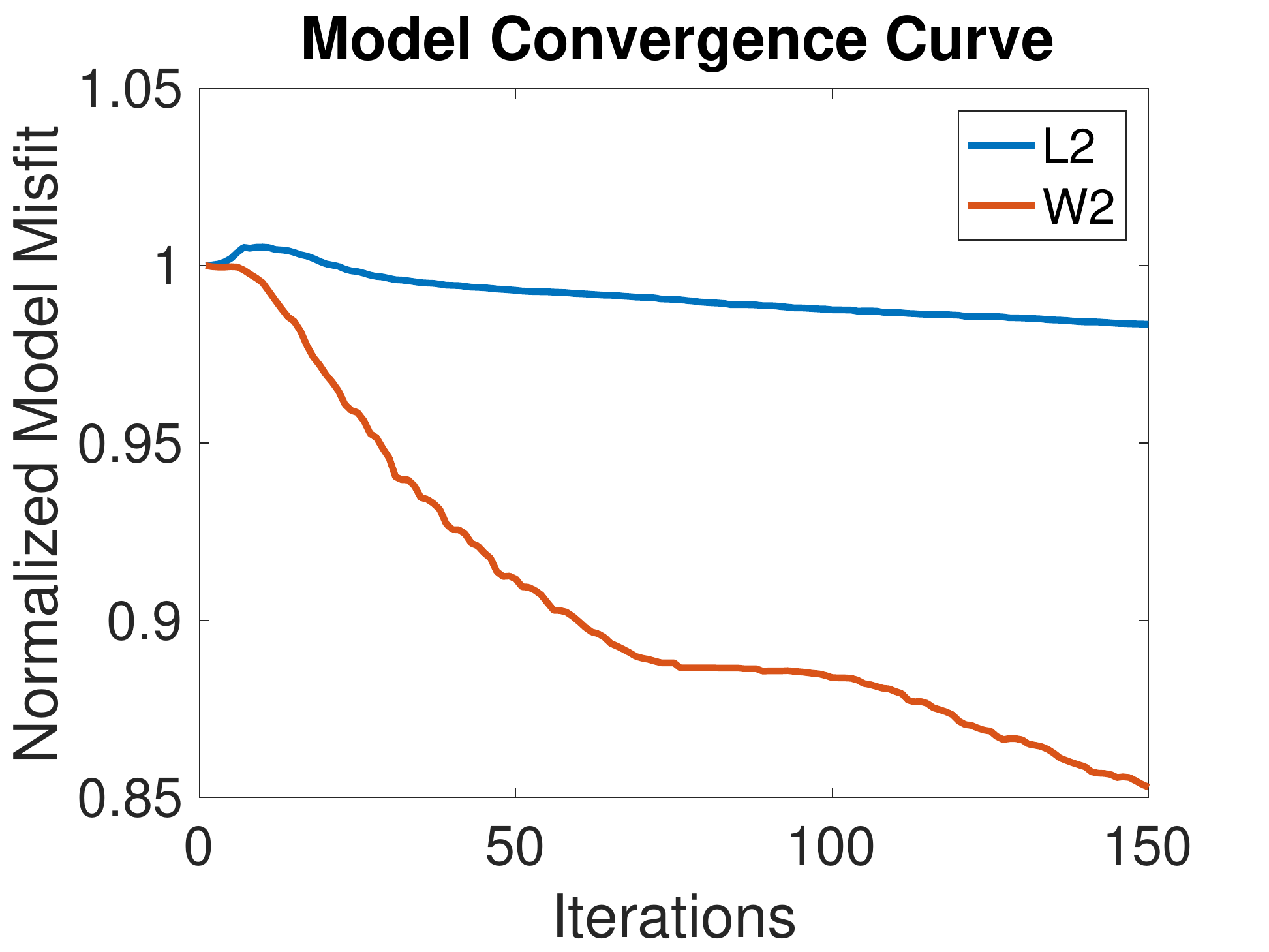}\label{fig:test2_model_conv2}}
   \subfloat[Model Error (more iterations)]{\includegraphics[width=0.33\textwidth]{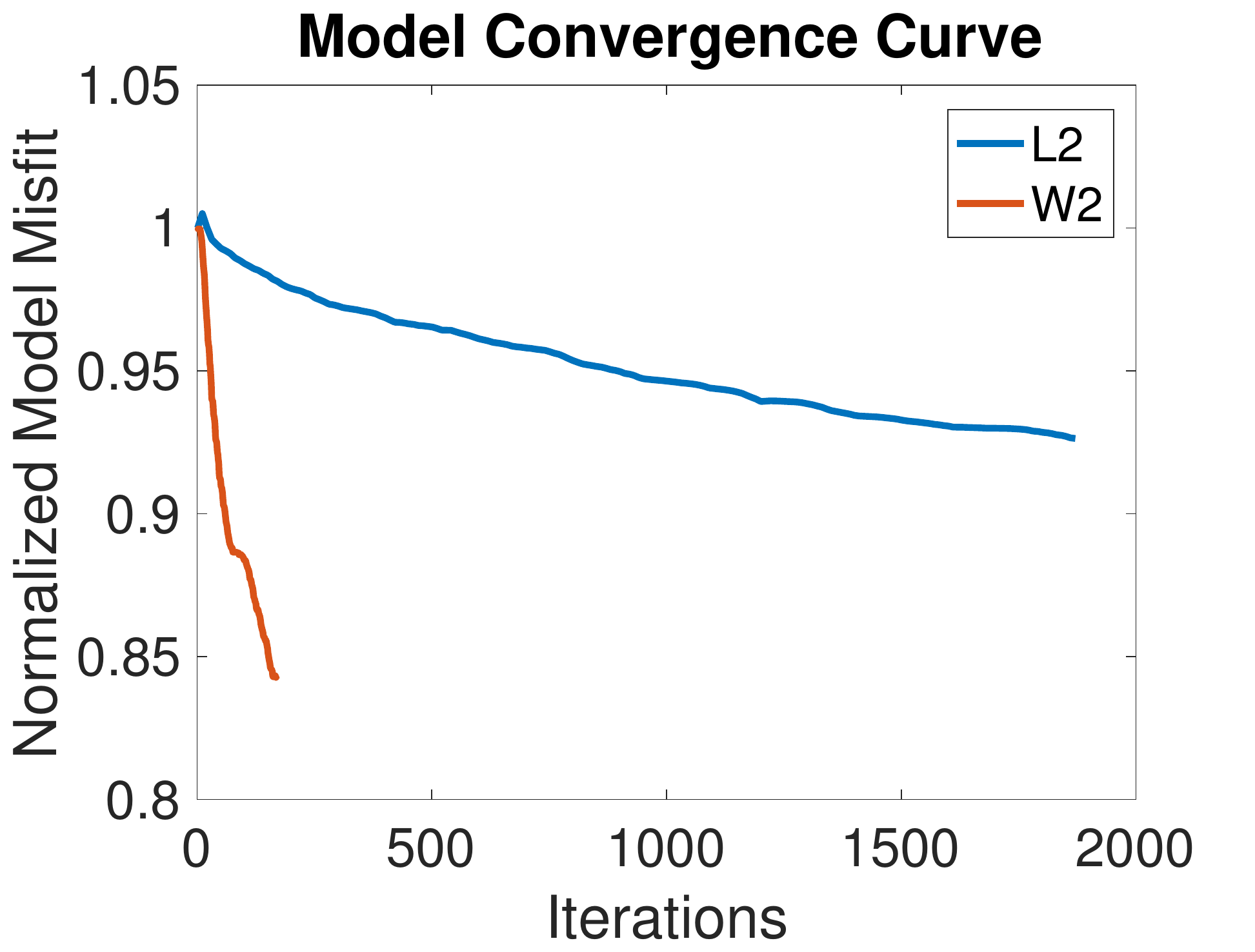}\label{fig:test2_model_conv}}
   \caption{Three-layer model: (a) Normalized $L^2$ and $W_2$ misfit value convergence curves;~(b) and (c)~velocity model parameter convergence curves for $L^2$-FWI and $W_2$-FWI.}~\label{fig:errors}
\end{figure}

%%%%%%%%%%%%%%%%%%%%%%%%%%%%%%%%%%%%%%%%%
\section{Conclusion}

%In the past five years, the fruitful research progress of using optimal transport in seismic inversion has addressed the challenge of relating optimal transport, a classical subject in mathematical analysis, to oscillatory seismic waveforms in PDE-constrained optimization. It is valuable to draw upon this experience in the future by applying the theory to other fields beyond seismic inversion. Many fast computational algorithms and recent research efforts are helping to translate the attractive theoretical properties of optimal transport onto elegant and scalable tools for a wide variety of applications involving modern science and engineering, as well as machine learning. 

In this paper, we have proved a sharper convexity theorem for the quadratic Wasserstein distance which is more general regarding multiple signal translation and dilation parameters. The improved theorem guarantees that $W_2$ is effective to deal with many issues $L^2$ suffers from, which we also illustrate in several large-scale benchmark inversions. In addition to the long-standing local minima issues, we bring in a new analysis of the sub-layer inversion where the difficulty of the $L^2$ norm is more significant. We have further illustrated that the $W_2$ distance captures the essential low-frequency components of the data residual, which is directly linked to the low-wavenumber structures of the velocity model as seen in the reconstruction of the layered models as well as the realistic 2004 BP salt model. Both mathematical analysis and numerical examples demonstrate that $W_2$ is an advantageous choice of objective function in data-driven inversion. 

Here we have demonstrated that optimal transport offers a new way of measuring signal misfit that considers both phase and intensity differences. Observations from numerical examples are in the same vein as the theorems addressing the convexity of the quadratic Wasserstein distance with respect to common data changes, as well as the robustness with respect to noise. These ideal properties from optimal transport facilitate successful trials of tackling the three long-standing challenges faced by the traditional $L^2$ norm based inversion techniques: the problem of local minima, the ill-posedness with respect to noise and the slow convergence with reflection-based inversion.

%It is a \textit{nonlinear} metric but offers ideal properties for \textit{nonlinear} problems, especially for nonlinear seismic inverse problems. 
%%%%%%%%%%%%%%%%%%%%%%%%%%%%%%%%%%%%%%%%%

%%%%%%%%%%%%%%%%%%%%%%%%%%%%%%%%%%%%%%%%%

\appendix
\numberwithin{equation}{section}

%%%%%%%%%%%%%%%%%%%%%%%%%%%%%%%%%%%%%%%%%

\section{Adjoint-State Method} ~\label{sec:adjoint_state_method}
Large-scale realistic 3D inversion is possible today. The advances in numerical methods and computational power allow for solving the 3D wave equations and compute the Fr\'{e}chet derivative with respect to model parameters, which are needed in the optimization.  In the adjoint-state method~\cite{vogel2002computational,Plessix,tarantola2005inverse}, one only needs to solve two wave equations numerically, the forward propagation and the backward adjoint wavefield propagation. Different misfit functions typically only affect the source term in the adjoint wave equation.

Let us consider the misfit function $J(m)$ for computing the difference between predicted data $f$ and observed data $g$ where $m$ is the model parameter, $F(m)$ is the forward modeling operator, $u(\mathbf{x},t)$ is the wavefield and $s(\mathbf{x},t)$ is the source. The predicted data $f$ is the partial Cauchy boundary data of $u$ which can be written as $f = Ru$ where $R$ is a restriction operator only at the receiver locations. The wave equation~\eqref{eq:FWD} can be denoted as
\bq ~\label{eq:adj_fwd}
F(m) u = s.
\eq
Taking first derivative regarding model $m$ on both sides gives us:
\bq
\frac{\partial F}{\partial m} u + F \frac{\partial u}{\partial m} = 0.
\eq
Therefore,
\bq~\label{eq:adj_grad1}
\frac{\partial f}{\partial m}  = -RF^{-1} \frac{\partial F}{\partial m} u.
\eq

By the chain rule, the gradient of misfit function $J$ with respect to $m$ is
\bq~\label{eq:adj_grad0}
\frac{\partial J}{\partial m} = \left(\frac{\partial f}{\partial m} \right)^T  \frac{\partial J}{\partial f}
\eq
We can derive the following equation by plugging~\eqref{eq:adj_grad1} into ~\eqref{eq:adj_grad0} :
\bq~\label{eq:adj_grad2}
\frac{\partial J}{\partial m}  =-u^T \left(\frac{\partial F}{\partial m}\right)^T F^{-T}R^T  \frac{\partial J}{\partial f}
\eq

Equation~\eqref{eq:adj_grad2} is the adjoint-state method. The term $F^{-T}R^T  \frac{\partial J}{\partial f}$ denotes the backward wavefield $v$ generated by the adjoint wave equation whose source is the data residual $R^T  \frac{\partial J}{\partial f}$. The gradient is similar to the conventional imaging condition~\cite{Claerbout1971}:
\bq~\label{eq:adj_grad3}
\frac{\partial J}{\partial m}  =- \int_0^T \frac{\partial^2 u(\mathbf{x},t)}{\partial t^2} v(\mathbf{x},t)dt,
\eq
where $v$ is the solution to the adjoint wave equation:
\begin{equation} \label{eq:FWI_adj}
     \left\{
     \begin{array}{rl}
     & m\frac{\partial^2 v(\mathbf{x},t)}{\partial t^2}- \Laplace v(\mathbf{x},t)  = R^T\frac{\partial J}{\partial f}\\
    & v(\mathbf{x}, T) = 0                \\
    & v_t(\mathbf{x}, T ) = 0                \\
     \end{array} \right.
\end{equation}
Therefore $F^T$ can be seen as the backward modeling operator. There are many other equivalent ways to formulate the adjoint-state method. One can refer to~\cite{Demanet2016,Plessix} for more details.

In FWI, our aim is to find the model parameter $m^{\star}$ that minimizes the objective function, i.e. \(m^{\star} = \argmin J(m) \). For this PDE-constrained optimization, one can use the Fr\'{e}chet derivative in a gradient-based iterative scheme to update the model $m$, such as steepest descent, conjugate gradient descent (CG), L-BFGS and Gauss-Newton method. One can also derive the second-order adjoint equation for the Hessian matrix and use the full Newton's method in each iteration, but it is not practical regarding memory and current computing power. It is one of the author's current research interests to analyze and approximate the Hessian matrix in optimization~\cite{Virieux2017}.


\begin{thebibliography}{10}

\bibitem{ambrosio2013user}
Luigi Ambrosio and Nicola Gigli.
\newblock A user's guide to optimal transport.
\newblock In {\em Modelling and Optimisation of Flows on Networks}, pages
  1--155. Springer, 2013.

\bibitem{Ballesio2018}
Marco Ballesio, Joakim Beck, Anamika Pandey, Laura Parisi, Erik von Schwerin,
  and Raul Tempone.
\newblock {Multilevel Monte Carlo Acceleration of Seismic Wave Propagation
  under Uncertainty}.
\newblock {\em arXiv preprint arXiv:1810.01710}, 2018.

\bibitem{barles1991convergence}
Guy Barles and Panagiotis~E Souganidis.
\newblock Convergence of approximation schemes for fully nonlinear second order
  equations.
\newblock {\em Asymptotic Analysis}, 4(3):271--283, 1991.

\bibitem{BenBre}
Jean-David Benamou and Yann Brenier.
\newblock A computational fluid mechanics solution to the {M}onge-{K}antorovich
  mass transfer problem.
\newblock {\em Numerische Mathematik}, 84(3):375--393, 2000.

\bibitem{benamou2015iterative}
Jean-David Benamou, Guillaume Carlier, Marco Cuturi, Luca Nenna, and Gabriel
  Peyr{\'e}.
\newblock {Iterative Bregman projections for regularized transportation
  problems}.
\newblock {\em {SIAM Journal on Scientific Computing}}, 37(2):A1111--A1138,
  2015.

\bibitem{benamou2017minimal}
Jean-David Benamou and Vincent Duval.
\newblock {Minimal convex extensions and finite difference discretization of
  the quadratic {M}onge-{K}antorovich problem}.
\newblock {\em arXiv preprint arXiv:1710.05594}, 2017.

\bibitem{benamou2014numerical}
Jean-David Benamou, Brittany~D Froese, and Adam~M Oberman.
\newblock Numerical solution of the optimal transportation problem using the
  {M}onge-{A}mp{\`e}re equation.
\newblock {\em Journal of Computational Physics}, 260:107--126, 2014.

\bibitem{Beydoun1988}
Wafik~B. Beydoun and Albert Tarantola.
\newblock {First {B}orn and {R}ytov approximations: Modeling and inversion
  conditions in a canonical example}.
\newblock {\em The Journal of the Acoustical Society of America},
  83(3):1045--1055, Mar 1988.

\bibitem{billette20052004}
FJ~Billette and Sverre Brandsberg-Dahl.
\newblock The 2004 {BP} velocity benchmark.
\newblock In {\em 67th EAGE Conference \& Exhibition}, 2005.

\bibitem{brenier1991polar}
Yann Brenier.
\newblock {Polar factorization and monotone rearrangement of vector-valued
  functions}.
\newblock {\em {Communications on Pure and Applied Mathematics}},
  44(4):375--417, 1991.

\bibitem{chen2017quadratic}
Jing Chen, Yifan Chen, Hao Wu, and Dinghui Yang.
\newblock The quadratic {W}asserstein metric for earthquake location.
\newblock {\em arXiv preprint arXiv:1710.10447}, 2017.

\bibitem{Claerbout1971}
Jon~F. Claerbout.
\newblock Toward a unified theory of reflector mapping.
\newblock {\em Geophysics}, 36(3):467--481, Jun 1971.

\bibitem{Cuturi2013}
Marco Cuturi.
\newblock Sinkhorn distances: lightspeed computation of optimal transport.
\newblock In {\em Proceedings of the 26th International Conference on Neural
  Information Processing Systems-Volume 2}, pages 2292--2300. Curran Associates
  Inc., 2013.

\bibitem{Dai2013}
Wei Dai and Gerard~T. Schuster.
\newblock Plane-wave least-squares reverse-time migration.
\newblock {\em Geophysics}, 78(4):S165--S177, Jul 2013.

\bibitem{Demanet2016}
Laurent Demanet.
\newblock Waves and imaging class notes -18.325.
\newblock 2016.

\bibitem{EFWass}
Bj{\"o}rn Engquist and Brittany~D Froese.
\newblock Application of the {W}asserstein metric to seismic signals.
\newblock {\em Communications in Mathematical Sciences}, 12(5):979--988, 2014.

\bibitem{engquist2016optimal}
Bj{\"o}rn Engquist, Brittany~D Froese, and Yunan Yang.
\newblock Optimal transport for seismic full waveform inversion.
\newblock {\em Communications in Mathematical Sciences}, 14(8):2309--2330,
  2016.

\bibitem{engquist1977absorbing}
Bj{\"o}rn Engquist and Andrew Majda.
\newblock Absorbing boundary conditions for numerical simulation of waves.
\newblock {\em Proceedings of the National Academy of Sciences},
  74(5):1765--1766, 1977.

\bibitem{Survey1}
Bj{\"o}rn Engquist and Yunan Yang.
\newblock Seismic imaging and optimal transport.
\newblock {\em arXiv preprint arXiv:1808.04801}, 2018.

\bibitem{Survey2}
Bj{\"o}rn Engquist and Yunan Yang.
\newblock Seismic inversion and the data normalization for optimal transport.
\newblock {\em arXiv preprint arXiv:1810.08686}, 2018.

\bibitem{essid2018quadratically}
Montacer Essid and Justin Solomon.
\newblock {Quadratically regularized optimal transport on graphs}.
\newblock {\em {SIAM Journal on Scientific Computing}}, 40(4):A1961--A1986,
  2018.

\bibitem{Evans}
Lawrence~C Evans and Wilfrid Gangbo.
\newblock {\em Differential Equations Methods for the {M}onge-{K}antorovich
  Mass Transfer Problem}, volume 653.
\newblock American Mathematical Society, 1999.

\bibitem{FroeseTransport}
Brittany~D Froese.
\newblock A numerical method for the elliptic {M}onge--{A}mp{\`e}re equation
  with transport boundary conditions.
\newblock {\em SIAM Journal on Scientific Computing}, 34(3):A1432--A1459, 2012.

\bibitem{FOFiltered}
Brittany~D Froese and Adam~M Oberman.
\newblock Convergent filtered schemes for the {M}onge--{A}mp{\`e}re partial
  differential equation.
\newblock {\em SIAM Journal on Numerical Analysis}, 51(1):423--444, 2013.

\bibitem{Gangbo1996}
Wilfrid Gangbo and Robert~J. McCann.
\newblock {The geometry of optimal transportation}.
\newblock {\em Acta Mathematica}, 177(2):113--161, 1996.

\bibitem{kantorovich1960mathematical}
Leonid~Vitalʹevich Kantorovich.
\newblock Mathematical methods of organizing and planning production.
\newblock {\em Management Science}, 6(4):366--422, 1960.

\bibitem{KnottSmith}
M.~Knott and C.~S. Smith.
\newblock On the optimal mapping of distributions.
\newblock {\em Journal of Optimization Theory and Applications}, 43(1):39--49,
  1984.

\bibitem{kolouri2016transport}
Soheil Kolouri, Serim Park, Matthew Thorpe, Dejan Slep{\v{c}}ev, and Gustavo~K
  Rohde.
\newblock Transport-based analysis, modeling, and learning from signal and data
  distributions.
\newblock {\em arXiv preprint arXiv:1609.04767}, 2016.

\bibitem{kuhn1955hungarian}
Harold~W Kuhn.
\newblock The {H}ungarian method for the assignment problem.
\newblock {\em Naval Research Logistics (NRL)}, 2(1-2):83--97, 1955.

\bibitem{lailly1983seismic}
P.~Lailly.
\newblock The seismic inverse problem as a sequence of before stack migrations.
\newblock In {\em Conference on Inverse Scattering: Theory and Application},
  pages 206--220. Society for Industrial and Applied Mathematics, Philadelphia,
  PA, 1983.

\bibitem{Li2016}
Wuchen Li, Stanley Osher, and Wilfrid Gangbo.
\newblock A fast algorithm for earth mover's distance based on optimal
  transport and $l_1$ type regularization.
\newblock {\em arXiv preprint arXiv:1609.07092}, 2016.

\bibitem{liu1989limited}
Dong~C Liu and Jorge Nocedal.
\newblock On the limited memory {BFGS} method for large scale optimization.
\newblock {\em Mathematical Programming}, 45(1):503--528, 1989.

\bibitem{liu2018multilevel}
Jialin Liu, Wotao Yin, Wuchen Li, and Yat~Tin Chow.
\newblock {Multilevel Optimal Transport: a Fast Approximation of
  {W}asserstein-1 distances}.
\newblock {\em arXiv preprint arXiv:1810.00118}, 2018.

\bibitem{luo1991wave}
Yi~Luo and Gerard~T Schuster.
\newblock Wave-equation traveltime inversion.
\newblock {\em Geophysics}, 56(5):645--653, 1991.

\bibitem{mccann1995existence}
R.~J. McCann.
\newblock Existence and uniqueness of monotone measure-preserving maps.
\newblock {\em Duke Mathematical Journal}, 80(2):309--324, 1995.

\bibitem{metivier2018graph}
L~M{\'e}tivier, A~Allain, R~Brossier, Q~M{\'e}rigot, E~Oudet, and J~Virieux.
\newblock {A graph-space approach to optimal transport for full waveform
  inversion}.
\newblock In {\em SEG Technical Program Expanded Abstracts 2018}, pages
  1158--1162. Society of Exploration Geophysicists, 2018.

\bibitem{W1_2D}
L~M{\'e}tivier, R~Brossier, Q~M{\'e}rigot, E~Oudet, and J~Virieux.
\newblock Measuring the misfit between seismograms using an optimal transport
  distance: application to full waveform inversion.
\newblock {\em Geophysical Journal International}, 205(1):345--377, 2016.

\bibitem{W1_3D}
L~M\'etivier, R~Brossier, Q~M{\'e}rigot, E~Oudet, and J~Virieux.
\newblock An optimal transport approach for seismic tomography: application to
  {3D} full waveform inversion.
\newblock {\em Inverse Problems}, 32(11):115008, 2016.

\bibitem{moczo2007finite}
Peter Moczo, Johan~OA Robertsson, and Leo Eisner.
\newblock The finite-difference time-domain method for modeling of seismic wave
  propagation.
\newblock {\em Advances in Geophysics}, 48:421--516, 2007.

\bibitem{Monge}
Gaspard Monge.
\newblock M\'{e}moire sur la th\'{e}orie des d\'{e}blais et de remblais.
  histoire de l'acad\'{e}mie royale des sciences de paris.
\newblock {\em avec les M\'{e}moires de Math\'{e}matique et de Physique pour la
  même ann\'{e}e}, pages 666--704, 1781.

\bibitem{Mora1988}
Peter Mora.
\newblock {Elastic wave‐field inversion of reflection and transmission data}.
\newblock {\em Geophysics}, 53(6):750--759, Jun 1988.

\bibitem{Motamed2018}
Mohammad Motamed and Daniel Appel{\"o}.
\newblock {Wasserstein metric-driven Bayesian inversion with application to
  wave propagation problems}.
\newblock {\em arXiv preprint arXiv:1807.09682}, 2018.

\bibitem{Oberman2015}
Adam~M Oberman and Yuanlong Ruan.
\newblock An efficient linear programming method for optimal transportation.
\newblock {\em arXiv preprint arXiv:1509.03668}, 2015.

\bibitem{Plessix}
R.-E. Plessix.
\newblock A review of the adjoint-state method for computing the gradient of a
  functional with geophysical applications.
\newblock {\em Geophysical Journal International}, 167(2):495--503, 2006.

\bibitem{poncet2018fwi}
R~Poncet, J~Messud, M~Bader, G~Lambar{\'e}, G~Viguier, and C~Hidalgo.
\newblock {FWI} with optimal transport: a 3{D} implementation and an
  application on a field dataset.
\newblock In {\em 80th EAGE Conference and Exhibition 2018}, 2018.

\bibitem{pratt1990inverse1}
R~Gerhard Pratt and MH~Worthington.
\newblock {Inverse theory applied to multi-source cross-hole tomography. Part
  1: Acoustic wave-equation method}.
\newblock {\em Geophysical Prospecting}, 38(3):287--310, 1990.

\bibitem{Puthawala2018}
Michael~A Puthawala, Cory~D Hauck, and Stanley~J Osher.
\newblock Diagnosing forward operator error using optimal transport.
\newblock {\em arXiv preprint arXiv:1810.12993}, 2018.

\bibitem{qiu2017full}
Lingyun Qiu, Jaime Ramos-Mart{\'\i}nez, Alejandro Valenciano, Yunan Yang, and
  Bj{\"o}rn Engquist.
\newblock Full-waveform inversion with an exponentially encoded
  optimal-transport norm.
\newblock In {\em SEG Technical Program Expanded Abstracts 2017}, pages
  1286--1290. Society of Exploration Geophysicists, 2017.

\bibitem{Julien2011}
Julien Rabin, Gabriel Peyr{\'e}, Julie Delon, and Marc Bernot.
\newblock Wasserstein barycenter and its application to texture mixing.
\newblock In {\em International Conference on Scale Space and Variational
  Methods in Computer Vision}, pages 435--446. Springer, 2011.

\bibitem{rachev1998mass}
Svetlozar~T Rachev and Ludger R{\"u}schendorf.
\newblock {\em Mass Transportation Problems: Volume I: Theory}, volume~1.
\newblock Springer Science \& Business Media, 1998.

\bibitem{Ramos2018}
J~Ramos-Mart{\'\i}nez, L~Qiu, J~Kirkeb{\o}, AA~Valenciano, and Y~Yang.
\newblock Long-wavelength fwi updates beyond cycle skipping.
\newblock In {\em SEG Technical Program Expanded Abstracts 2018}, pages
  1168--1172. Society of Exploration Geophysicists, 2018.

\bibitem{ryu2018unbalanced}
Ernest~K Ryu, Wuchen Li, Penghang Yin, and Stanley Osher.
\newblock {Unbalanced and Partial $L_1$ Monge--Kantorovich Problem: A Scalable
  Parallel First-Order Method}.
\newblock {\em Journal of Scientific Computing}, 75(3):1596--1613, 2018.

\bibitem{Santambrogio}
Filippo Santambrogio.
\newblock {\em Optimal Transport for Applied Mathematicians: Calculus of
  Variations, PDEs, and Modeling}, volume~87.
\newblock Birkh{\"a}user, 2015.

\bibitem{schmitzer2016sparse}
Bernhard Schmitzer.
\newblock A sparse multiscale algorithm for dense optimal transport.
\newblock {\em Journal of Mathematical Imaging and Vision}, 56(2):238--259,
  2016.

\bibitem{tarantola2005inverse}
Albert Tarantola.
\newblock {\em Inverse Problem Theory: Methods for Data Fitting and Model
  Parameter Estimation}.
\newblock {SIAM}, 2005.

\bibitem{tarantola1982generalized}
Albert Tarantola and Bernard Valette.
\newblock Generalized nonlinear inverse problems solved using the least squares
  criterion.
\newblock {\em Reviews of Geophysics}, 20(2):219--232, 1982.

\bibitem{versteeg1994marmousi}
Roelof Versteeg.
\newblock {The Marmousi experience: Velocity model determination on a synthetic
  complex data set}.
\newblock {\em The Leading Edge}, 13(9):927--936, 1994.

\bibitem{Villani}
C.~Villani.
\newblock {\em Topics in Optimal Transportation}, volume~58 of {\em Graduate
  Studies in Mathematics}.
\newblock American Mathematical Society, Providence, RI, 2003.

\bibitem{villani2008optimal}
C.~Villani.
\newblock {\em Optimal Transport: Old and New}, volume 338.
\newblock Springer Science \& Business Media, 2008.

\bibitem{Virieux2017}
J.~Virieux, A.~Asnaashari, R.~Brossier, L.~M{\'{e}}tivier, A.~Ribodetti, and
  W.~Zhou.
\newblock {6. An introduction to full waveform inversion}.
\newblock In {\em Encyclopedia of Exploration Geophysics}, pages R1--1--R1--40.
  Society of Exploration Geophysicists, Jan 2014.

\bibitem{vogel2002computational}
Curtis~R Vogel.
\newblock {\em {Computational Methods for Inverse Problems}}, volume~23.
\newblock {SIAM}, 2002.

\bibitem{yang2016review}
Pengliang Yang, Romain Brossier, Ludovic M{\'e}tivier, and Jean Virieux.
\newblock A review on the systematic formulation of {3-D} multiparameter full
  waveform inversion in viscoelastic medium.
\newblock {\em Geophysical Journal International}, 207(1):129--149, 2016.

\bibitem{yangletter}
Yunan Yang and Bj{\"o}rn Engquist.
\newblock Analysis of optimal transport and related misfit functions in
  full-waveform inversion.
\newblock {\em Geophysics}, 83(1):A7--A12, 2018.

\bibitem{yang2017application}
Yunan Yang, Bj{\"o}rn Engquist, Junzhe Sun, and Brittany~D Froese.
\newblock Application of optimal transport and the quadratic {W}asserstein
  metric to full-waveform inversion.
\newblock {\em Geophysics}, 83(1):1--103, 2017.

\end{thebibliography}
\end{document}